\documentclass[12pt]{amsart}
\usepackage{a4wide}
\usepackage[utf8]{inputenc}
\usepackage{tikz}
\usepackage[backend=biber,style=alphabetic,maxbibnames=99]{biblatex}  
\usepackage{multirow}
\usepackage{textcomp}
\usepackage{csquotes} 

\usepackage{marginnote}
\usepackage{xcolor}
\usepackage{listings}
\usepackage{hyperref}
\hypersetup{colorlinks=false}
\usepackage{amssymb}
\usepackage{enumitem}
\usepackage[none]{hyphenat}
\usepackage{bbm}
\usepackage{amsmath,amssymb}
\newcommand{\vertiii}[1]{{\left\vert\kern-0.25ex\left\vert\kern-0.25ex\left\vert #1 
    \right\vert\kern-0.25ex\right\vert\kern-0.25ex\right\vert}}

\newtheorem{theorem}{Theorem}[section]
\newtheorem{thmx}{Theorem}
 
\newtheorem{corollary}[theorem]{Corollary}
\newtheorem{proposition}[theorem]{Proposition}
\newtheorem{lemma}[theorem]{Lemma}
\newtheorem{claim}{Claim}
\newtheorem{question}{Question}

\theoremstyle{definition}
\newtheorem{definition}[theorem]{Definition}

\newtheorem{remark}[theorem]{Remark}

\newcommand{\R}{\mathbb{R}}

\newcommand{\N}{\mathbb{N}}

\newcommand{\Q}{\mathbb{Q}}

\newcommand{\Ke}{K_\varepsilon}

\DeclareMathOperator{\diam}{diam}
\DeclareMathOperator{\Lip}{Lip}
\DeclareMathOperator{\coLip}{co-Lip}

\definecolor{egraf}{rgb}{0.2,0.4,0}
\author{Jaan Kristjan Kaasik}\address[J. K. Kaasik]{Institute of Mathematics and Statistics, University of Tartu, Narva mnt 18, 51009 Tartu, Estonia
\newline
\href{https://orcid.org/0000-0001-6561-5557}{ORCID: \texttt{0000-0001-6561-5557} } 
\newline 
}
\author{Andr\'es Quilis}
\address[A. Quilis]{Instituto Universitario de Matemática Pura y Aplicada, Universitat Politècnica de València, Camí de Vera s/n, 46022, Valencia, Spain
\newline
\href{https://orcid.org/0000-0001-6022-9286}{ORCID: \texttt{0000-0001-6022-9286} } }
\email{\texttt{jaan.kristjan.kaasik@ut.ee}}
\email{\texttt{anquisan@upv.es}}

\subjclass[2020]{}
\title[Universality of Lipschitz quotients and the curve-flat index]{Universality of Lipschitz quotients and the curve-flat index}
\keywords{}
\begin{document}

\begin{abstract}
  We study universality of Lipschitz quotients. First, we modify a construction of Johnson, Lindenstrauss, Preiss and Schechtman to obtain a complete separable metric space that has every complete separable metric space as a Lipschitz quotient. 
  
  Our main result is in the compact setting, where we prove that no such universal metric space can exist. We deduce this impossibility result by studying the curve-flat index, an ordinal index which provides a measure of the complexity of the curve-fragment structure in a metric space. We show that Lipschitz quotients cannot increase this index in compact domains; while there exist compact spaces with arbitrarily high countable curve-flat index. The main technical part of the paper is dedicated to proving a strong version of the latter fact: for every ordinal $\alpha$ and every compact metric space $M$, there exists a compact metric space $N$ such that the curve-flat quotient of $N$ of order $\alpha$ is almost-isometric to $M$.

\end{abstract}

\maketitle

\section{Introduction}

The \emph{co-Lipschitz} constant of a map $f\colon M\rightarrow N$, denoted by $\text{co-Lip}(f)$, is the infimum over all constants $C>0$ such that
\begin{equation}
B_N(f(x),r)\subset f(B_M(x,Cr))
\end{equation}
for all $x\in M$ and all $r>0$\footnote{$B_M(x,r)$ denotes the closed ball in $M$ centered at $x$ and of radius $r$}. Co-Lipschitz mappings (that is, mappings with finite co-Lipschitz constant) can be thought of as \emph{Lipschitz-open} mappings, a term by which they are also known. A \emph{$C$-Lipschitz quotient} is a Lipschitz map $f\colon M\rightarrow N$ between metric spaces which is also \emph{co-Lipschitz}, and that satisfies $\text{Lip}(f)\cdot\text{co-Lip}(f)\leq C$. 

Lipschitz quotients have been considered in the literature several times under different names (for instance, in \cite{Gro99}), but the first systematic study was done by Bates, Johnson, Lindenstrauss, Preiss and Schechtman in \cite{BJLPS99}, in the context of nonlinear maps between Banach spaces. The term \emph{Lipschitz quotient} derives its name precisely from the linear theory, as in Banach spaces the Open Mapping Theorem implies that linear quotients between Banach spaces are in particular Lipschitz quotients. 

Since then, several metric and Banach space properties have been shown to be preserved by Lipschitz (and other nonlinear) quotients, providing tools to prove the non-existence of such mappings between certain spaces. For instance, improving a previous result of \cite{BJLPS99}, Mendel and Naor showed in \cite{MN13} that $L_q$ is not a Lipschitz quotient of any subset of $L_p$ for $2\leq p<q$, by showing that Markov $p$-convexity is preserved through these mappings (see also \cite{NPS18}). Other notions known to enjoy this property are, for instance, the Rolewicz property $(\beta)$ (see \cite{DKR16} or \cite{BZ16}) or umbel $p$-convexity (see \cite{BG24}).

A foundational result in this regard is the following universality result (see Section \ref{Prelim} for precise definitions). 
\begin{theorem}[\cite{JLPS02}]
\label{Theorem_JLPS02}
    Every separable complete geodesic metric space is a $1$-Lipschitz quotient of any Banach space containing $\ell_1$.
\end{theorem}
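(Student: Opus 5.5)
The plan is in two stages. First, reduce to the single space $\ell_1$. Then build a $1$-Lipschitz quotient from $\ell_1$ onto an arbitrary separable complete geodesic space $M$.

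\emph{Reduction to $\ell_1$.} If a Banach space $X$ contains $\ell_1$, the aim is a linear $1$-Lipschitz quotient $X\to\ell_1$. For separable $X$ this is standard: an isomorphic copy of $\ell_1$ can be taken almost isometric by James' distortion theorem. The nonseparable case then needs a further reduction. Lifting the unit vector basis of $\ell_1$ through a quotient map (possible because $\ell_1$ is projective) shows the reverse composition is also available. So it suffices to produce linear quotients $X\to\ell_1$ whose Lipschitz and co-Lipschitz constants have product arbitrarily close to $1$. The remaining constant can be absorbed by working with $\ell_1$ on a slightly perturbed norm, or by making the nonlinear construction below robust with constant $1+\varepsilon$ for every $\varepsilon$. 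Composition of Lipschitz quotients multiplies the constants, so the main task is a $1$-Lipschitz quotient $\ell_1\to M$.

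\emph{Nonlinear construction.} Fix a dense sequence $(x_n)$ in $M$ and a base point $x_0$. The idea is a tree-like parametrization. Each finite path $x_0\to x_{n_1}\to\dots$ is encoded by a point of $\ell_1$. The coordinate $e_n$ direction is sent along a geodesic towards $x_n$, so that moving a distance $t$ in the $e_n$ direction from a point mapping to $y$ moves the image a distance $t$ along a geodesic from $y$ towards $x_n$.

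Concretely, I would define $f$ on finitely supported vectors by induction on the support. The coordinates are processed in a fixed order, with $f(a+te_n)$ defined by following a geodesic from $f(a)$ toward $x_n$ for time $|t|$, and toward some reflected target for negative $t$. Positive and negative directions are handled via a doubled basis $e_n^{\pm}$, using $\ell_1\cong\ell_1\oplus_1\ell_1$. One then checks $1$-Lipschitz in each coordinate and uses the $\ell_1$ norm (the triangle inequality along coordinate paths) to obtain global $1$-Lipschitz. Finally $f$ is extended to all of $\ell_1$ by density and completeness of $M$.

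\emph{Co-Lipschitz estimate: the main obstacle.} We need that for every $a$ and every $y\in B(f(a),r)$ there is $b$ with $\|b-a\|\le r+\varepsilon$ and $f(b)=y$. The inductive definition makes changing early coordinates move all later images, so we should instead only append new coordinates beyond the support of $a$.

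I would first approximate $y$ by some $x_n$ with $n$ large and no overlap with the support. The path from $f(a)$ toward $x_n$ then reaches near $y$ within distance $\approx d(f(a),y)$. Next I iterate with corrections of geometrically decreasing size $\varepsilon 2^{-k}$, using fresh coordinates each time. This gives a Cauchy sequence in $\ell_1$ whose total length is at most $r+\varepsilon$, and completeness of $M$ gives exact hitting.

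The difficulty is that $a$ need not be finitely supported. This requires a careful ordering, in which every coordinate index admits infinitely many later "fresh" indices aiming at each $x_n$ (for example, index the basis by $\mathbb N\times\mathbb N$). It also requires a uniform continuity argument so that tails of $a$ affect the image little. Obtaining the constant exactly $1$, rather than $1+\varepsilon$, requires this approximation scheme to lose nothing in the limit, which I expect to be the most delicate point.
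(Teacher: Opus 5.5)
The paper does not prove this statement. It is quoted from \cite{JLPS02}, and the only hint of the argument (beginning of Section~\ref{Section:Univ_Lips_Quotients_Sep}) is that it passes through $\ell_1$ trees, which are metric trees built by repeatedly $\ell_1$-gluing segments at points. Your proposal has two genuine gaps.

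\textbf{The reduction to $\ell_1$ is false.} Containing $\ell_1$ does not give a linear quotient onto $\ell_1$. For instance, $C[0,1]$ contains $\ell_1$. A linear surjection $C[0,1]\to\ell_1$ would have an adjoint embedding $\ell_\infty\supset c_0$ into the weakly sequentially complete space $C[0,1]^*$, which is impossible. James' theorem gives almost isometric \emph{subspaces}, not quotients. Projectivity of $\ell_1$ only says that a linear quotient onto $\ell_1$, if it exists, splits. A nonlinear $1$-Lipschitz quotient $X\to\ell_1$ does exist, but only as a special case of the statement you are proving ($\ell_1$ is separable, complete and geodesic), so it cannot serve as your first step. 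Also, maps with constants $1+\varepsilon$ for each $\varepsilon$ do not give a single $1$-Lipschitz quotient; you need one map. Conversely, your final worry about the exact constant is unfounded: since $\coLip$ is an infimum, an additive loss of $\varepsilon$ for every $\varepsilon>0$, for one fixed map, already suffices.

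\textbf{The main construction is not Lipschitz.} The map $f\colon\ell_1\to M$ obtained by successively moving along geodesics towards the targets is not Lipschitz, and in general not even continuous. Your coordinatewise $1$-Lipschitz check only works for the last coordinate processed. Changing an earlier coordinate $a_j$ moves the base point of every later move, and the operation ``move distance $s$ along a geodesic towards $x_k$'' is in general not non-expansive in the base point.

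Concretely, let $M$ be the circle of length $2\pi$. Let $j$ precede $k$ in your order, with the antipode of the target of $k$ in the interior of the arc traced by the $e_j$-move; such pairs exist by density. Let $t_\pm$ be parameters slightly below and above the antipodal point, and take $s$ near $\pi/2$. The points $t_-e_j+se_k$ and $t_+e_j+se_k$ are at $\ell_1$-distance $|t_+-t_-|$, yet they are sent to almost opposite points of the circle.

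\textbf{The missing idea is the tree.} One attaches fresh segments at every point of a countable dense set and maps each onto a geodesic issuing from the image of its attaching point. Points with different histories then lie on different branches, and their distance is computed through the branch point. The Lipschitz bound therefore reduces to the triangle inequality in $M$. The co-Lipschitz bound follows from the abundance of fresh branches together with completeness. Compare the construction of $SUT$ in Section~\ref{Section:Univ_Lips_Quotients_Sep}, where the segments are replaced by two-point spaces. Even after that, you still need a $1$-Lipschitz quotient from the Banach space onto the tree. By the first point, this cannot be obtained by factoring through a linear quotient onto $\ell_1$, and your proposal contains nothing that would provide it.
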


\medskip

Another line of research concerns the study of Lipschitz quotients in finite-dimensional euclidean spaces. In this context, notions regarding connectedness and rectifiability in metric spaces play a more prominent role. Recall that a metric space is \emph{purely $1$-unrectifiable} if it does not contain any bi-Lipschitz copy of a positive measure subset of $\R$. In \cite{KMM06} Kun, Maleva and Máthé characterized compact purely $1$-unrectifiable subsets of $\R^n$ using a close relative to Lipschitz quotients (we refer again to Section \ref{Prelim} for the relevant definitions). A non-exhaustive sample of articles in this line is \cite{JLPS00,Cso01,MV21,HM23}. 

\medskip

In this paper, we study universality for Lipschitz quotients in large classes of metric spaces. Given a family $\mathcal{F}$ of metric spaces, we say that a metric space $X$ is \emph{universal for ($C$-)Lipschitz quotients onto $\mathcal{F}$ if there exists a ($C$)-Lipschitz quotient from $X$ onto any space in $\mathcal{F}$.} First, we consider the question of whether Theorem \ref{Theorem_JLPS02} can be extended to construct a separable complete metric space universal for $1$-Lipschitz quotients onto all complete separable metric spaces. Thus we arrive at our first main result.

\begin{thmx}
    \label{Main_Theorem_1_SUT}
    There exists a separable complete metric space $SUT$ which is universal for 1-Lipschitz quotients onto all separable complete metric spaces.
\end{thmx}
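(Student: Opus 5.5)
The plan is to take as $SUT$ the completion of a single countable, highly branching, non-geodesic ``discrete tree''. For a given separable complete $M$, I would build a $1$-Lipschitz map into $M$ by choosing images vertex by vertex, and then check co-Lipschitzness with constant $1$ by a successive-approximation argument.

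\textbf{Step 1: the space.} Let $T$ be the rooted tree whose vertices are the finite sequences $s=(r_1,k_1,\dots,r_n,k_n)$ with $r_i\in\Q_{>0}$ and $k_i\in\N$; the empty sequence is the root. The children of $s$ are $s^\frown(r,k)$, and the edge from $s$ to $s^\frown(r,k)$ gets length $r$. So every vertex has, for each positive rational $r$, infinitely many children at distance $r$. Metrize the vertex set by the tree metric: $d(s,t)$ is the sum of the edge lengths along the unique path from $s$ to $t$ through their longest common prefix. Only vertices are included, not the edges as segments, so this space is countable and not geodesic. Let $SUT$ be its completion; it is separable and complete. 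The fact that no segments are included is essential. A $1$-Lipschitz quotient with $\Lip\cdot\coLip\le 1$ out of a length space forces the target to be ``almost'' a length space, which is why Theorem \ref{Theorem_JLPS02} cannot give this result directly.

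\textbf{Step 2: the map.} Fix a separable complete $M$ and a countable dense set $D\subset M$, and define $f$ on vertices by induction on length. Set $f(\emptyset)=y_0\in D$. Suppose $f(s)$ is already defined. The children of $s$ are indexed by pairs $(r,k)$, and for each $r$ there are countably many of them. Enumerate all points $y\in D$ with $d(f(s),y)\le r$ (there are countably many), and assign them to distinct children $s^\frown(r,k)$, setting $f(s^\frown(r,k))=y$. Every unassigned child gets $f(s^\frown(r,k))=f(s)$. Each edge of length $r$ is then mapped to a pair of points at distance at most $r$. Since the tree metric is a sum of edge lengths along the connecting path, the triangle inequality in $M$ shows that $f$ is $1$-Lipschitz on $T$. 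It therefore extends uniquely to a $1$-Lipschitz map $f\colon SUT\to M$.

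\textbf{Step 3: co-Lipschitz constant $1$ and surjectivity.} This is the main obstacle. The construction only gives exact hits for targets in $D$ at rational radii and from vertices, whereas we need
\[
B_M(f(x),r)\subset f\bigl(B_{SUT}(x,(1+\varepsilon)r)\bigr)
\]
for every $x\in SUT$, every $r>0$ and every $\varepsilon>0$, which is enough since $\coLip$ is an infimum. The first thing I would try is the usual Cauchy-sequence argument. Given $x$ and $y$ with $d(f(x),y)\le r$, first pick a vertex $s_0$ very close to $x$. Then choose $y_1\in D$ close to $y$ and a rational $r_1$ slightly larger than $d(f(s_0),y_1)$, and pass to the child $s_1$ of $s_0$ with $f(s_1)=y_1$; this step costs $r_1$ in the domain. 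Next pick $y_2\in D$ within $\varepsilon 2^{-2}r$ of $y$, and go to a child $s_2$ of $s_1$ at rational distance $r_2\approx d(y_1,y_2)$, and so on. The distances $d(s_j,s_{j+1})=r_{j+1}$ are summable with total at most $(1+\varepsilon)r$, so $(s_j)$ is Cauchy in $SUT$ and converges to some $x'$. By continuity $f(x')=\lim y_j=y$, and $d(x,x')\le(1+\varepsilon)r$ after accounting for the initial approximation of $x$ by $s_0$. Applying this to the root gives surjectivity. Together with $\Lip(f)\le1$, it gives $\Lip(f)\cdot\coLip(f)\le1$.

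The delicate point is the bookkeeping in the base step, when $x$ is a non-vertex point of the completion. I would approximate $x$ by a vertex $s_0$ with $d(x,s_0)\le\varepsilon r/4$, so that $d(f(s_0),y)\le r+\varepsilon r/4$, and absorb all errors into the $\varepsilon$. The same check also rules out any hidden dependence of the co-Lipschitz bound on the choice of $D$.
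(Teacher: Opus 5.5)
Your proposal is correct and follows essentially the paper's route: a countable $\ell_1$-tree of discrete points at rational distances, with images chosen vertex by vertex in a countable dense set $D$, so that each point of $D$ is hit in one step from each vertex at a domain cost only slightly above its distance. The differences are cosmetic: you index children by pairs $(r,k)$ instead of choosing distinct rationals $q_{(a,k)}$ inside one copy of $T_*$, and your Step 3 inlines the Cauchy-sequence argument that the paper isolates as Lemma \ref{Lemma:Quotients_between_subsets} for passing to the completions.
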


Since (Lipschitz) curves are preserved by Lipschitz mappings, such a metric space cannot be rectifiably connected. Indeed, we construct $SUT$ as a purely $1$-unrectifiable version of the main construction in \cite{JLPS02}, by iteratively gluing discrete points at rational distances in a metric tree-like fashion.

\medskip

However, the situation is different and more involved in the compact setting. This is reflected in the next theorem, which is our main contribution and to which most of this paper is dedicated to proving.

\begin{thmx}
\label{Main_Theorem_2_nocompactSUT}
    There is no compact metric space universal for Lipschitz quotients onto all compact metric spaces.
\end{thmx}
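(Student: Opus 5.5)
The strategy follows the outline in the abstract. We attach to every compact metric space an ordinal invariant, the \emph{curve-flat index}, and prove two things: Lipschitz quotients with compact domain cannot raise it, and compact spaces attain every countable value of it. Since a compact metric space has countable index (see the third paragraph), this rules out a universal compact space.

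\textbf{Step 1: the derivation.} For a compact metric space $M$, declare $x\sim y$ when $x$ and $y$ can be joined by a finite chain of Lipschitz curve fragments, that is, by images of positive-measure compact subsets of $\R$ under Lipschitz maps. In practice one may also need a limiting, closed version of this relation. The \emph{curve-flat quotient} $M'$ is obtained by collapsing curve fragments: the distance becomes the largest pseudometric that is bounded by $d$ and vanishes along curve fragments, and we then take the metric quotient. Iterating transfinitely gives spaces $M^{(\alpha)}$, with intersections or limits at limit ordinals. The index $\iota(M)$ is the least $\alpha$ at which the process stabilises, i.e.\ at which $M^{(\alpha)}$ is purely $1$-unrectifiable, or \emph{curve-flat}.

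\textbf{Step 2: monotonicity under quotients.} Let $f\colon M\to N$ be a Lipschitz quotient with $M$ compact. We show that $f$ induces a Lipschitz quotient $M'\to N'$, and by transfinite induction one $M^{(\alpha)}\to N^{(\alpha)}$ for every $\alpha$.
\begin{itemize}
\item The Lipschitz part is easy: $f$ sends curve fragments to curve fragments or points, so pseudometrics vanishing on fragments of $N$ pull back correctly.
\item The co-Lipschitz part is the main obstacle. Given a curve fragment in $N$, we must lift it, up to a small error, to a chain of curve fragments in $M$ whose length is controlled by $\coLip(f)$.
\end{itemize}
Lipschitz quotients do not lift curves in general. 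My first attempt uses compactness together with the co-Lipschitz ball condition: discretise the fragment at scale $\delta$, lift the points successively using $B_N(f(x),r)\subset f(B_M(x,Cr))$, and pass to a limit as $\delta\to 0$ via Arzelà--Ascoli. A limit of such discrete chains is a Lipschitz curve in $M$, provided the chain lengths stay bounded, which they do by the co-Lipschitz estimate.
\begin{itemize}
\item This shows that points identified in $N'$ have lifts whose $M'$-distance is at most $C$ times their $N'$-distance.
\item Compactness is used exactly here, which explains why Theorem \ref{Main_Theorem_1_SUT} can hold in the noncompact setting.
\end{itemize}
At limit stages we use compactness once more, so that intersections and limits commute with images. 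The conclusion is $\iota(N)\le\iota(M)$.

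\textbf{Step 3: countability and arbitrarily large values.}
\begin{itemize}
\item \emph{Countability.} For compact $M$ the derivation must stabilise at a countable ordinal. Each strict step collapses something at a fixed positive scale in the Gromov--Hausdorff sense. Using separability, for instance through a strictly decreasing family of closed sets or pseudometrics indexed by a countable dense set, a strictly decreasing transfinite sequence in a Polish setting has countable length.
\item \emph{Arbitrarily large values.} For every countable $\alpha$ we construct a compact $N_\alpha$ with $\iota(N_\alpha)\ge\alpha$. The abstract's stronger statement, that $N^{(\alpha)}$ is almost isometric to any prescribed $M$, suggests an inductive construction.
\begin{itemize}
\item \emph{Successor step.} Given $N$ realising $\alpha$, build a space whose curve-flat quotient is (almost) $N$. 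Replace each point, or a dense net, by small rectifiable "bridges": geodesic segments of shrinking lengths glued along $N$, in the way one gets a space whose curve collapse returns the original.
\item \emph{Limit step.} Take a compact $\ell_\infty$-type or wedge sum of rescaled spaces $N_{\alpha_n}$ with diameters tending to $0$, so that the result stays compact.
\end{itemize}
\end{itemize}

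\textbf{Conclusion.} Let $U$ be compact and let $\beta=\iota(U)$, which is countable. Choose a compact $N$ with $\iota(N)>\beta$. By Step 2, $N$ is not a Lipschitz quotient of $U$, so $U$ is not universal, which proves Theorem \ref{Main_Theorem_2_nocompactSUT}.
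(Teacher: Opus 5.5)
\textbf{Verdict: genuine gap in Step 3.} Your Steps 1--2 follow the paper's route, up to the small points at the end: the curve-flat index, and lifting a near-optimal fragment point by point through the co-Lipschitz ball condition, then passing to a limit by compactness, as in Proposition~\ref{Prop:technical_ineq_quotients}. The gap is in Step 3, the construction of compact spaces of arbitrarily large countable index, which is where almost all of the work lies.

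\textbf{Why the successor step fails.} Your successor step builds $N'$ with $N'_{cf}\approx N$. That inserts a new level at the \emph{bottom} of the transfinite sequence of quotients, so $(N')^{1+\gamma}_{cf}\approx N^{\gamma}_{cf}$ for all $\gamma$ and $\alpha_{cf}(N')=1+\alpha_{cf}(N)$. Since $1+\gamma=\gamma$ for every infinite ordinal $\gamma$, this never increases an infinite index. Finite indices plus wedge sums take you to $\omega$, and from there neither operation goes further, since the wedge only produces suprema. This is exactly the pitfall discussed after Theorem~\ref{Theorem:main_theorem_3_general}. It is why the paper proves Theorem~\ref{Main_Theorem_4_compacthighordercf} in its strong form: for every countable $\alpha$ and every compact $M$ there is a compact $Y$ whose $\alpha$-th quotient is $(1+\varepsilon)$-isometric to $M$. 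Then $\alpha_{cf}(Y)=\alpha+\alpha_{cf}(M)$, which equals $\alpha+1$ for $M=[0,1]$.

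\textbf{What the paper does instead.} The new level has to be added at the \emph{top}, and the paper does this by induction on $\alpha$. The frame is the snowflake $(M,\omega\circ d)$. At pairs $(x_n,y_n)$ with $d(x_n,y_n)\to 0$ it attaches threads $S_n$, obtained from the inductive hypothesis applied to the gapped segment $S_{\{x_n,y_n\},\omega}$ with ordinals $\alpha_n<\alpha$ approaching $\alpha$. Lemma~\ref{Lemma:Attach_and_CF_commute} and the bending lemmas show two things. For $\beta<\alpha$ the frame is bent at only finitely many pairs, so it is still purely $1$-unrectifiable. At stage $\alpha$ every thread collapses to a pair at distance $d(x_n,y_n)$, and Lemma~\ref{Lemma:Full_bending_isom_better_small_scales} recovers $(M,d)$ up to $1+\varepsilon$.

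\textbf{Your first-order step also fails as described.} It would not even give $N'_{cf}\approx N$. If rectifiable bridges are glued to $N$ with its original metric, the curve-flat quotient also collapses the curve fragments of $N$ itself. Moreover, a gapless bridge between two points forces $d_{cf}=0$ between them. You must replace $d$ by a local distortion $\omega\circ d$, so that the frame is purely $1$-unrectifiable, and encode $d(x,y)$ as the \emph{gap} of a gapped segment of length $\omega(d(x,y))$, as in Theorem~\ref{Theorem:MainTheoremA_general}. Doing this over a dense set of pairs destroys compactness. That is why the compact construction uses pairs at shrinking scales, chosen by the admissibility conditions, and recovers $d$ only approximately via bending.

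\textbf{Two smaller points.}
\begin{enumerate}
\item The relation in Step~1 is degenerate. A $1$-Lipschitz fragment that is constant on an interval and then jumps across a long gap joins any two points. The correct notion is quantitative: $d_{cf}(x,y)=\inf\lambda([\min K,\max K]\setminus K)$.
\item In Step~2, deducing $\alpha_{cf}(N)\le\alpha_{cf}(M)$ from the induced quotients $M^\alpha_{cf}\to N^\alpha_{cf}$ also needs that a Lipschitz quotient of a compact purely $1$-unrectifiable space is purely $1$-unrectifiable (Corollary~\ref{Corollary:Quotient_of_p1u_is_p1u}). Your lifting estimate gives this with one more application, but the step should be stated.
\end{enumerate}
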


Let us point out intuitively why compactness yields a much different theory. If $f\colon M\rightarrow N$ is a Lipschitz quotient and $\gamma$ is a Lipschitz curve in $N$, then the co-Lipschitz condition lets us build finite sequences in $M$ that approximate a bi-Lipschitz copy of $\gamma$ in $M$. If, moreover, $M$ is compact, these finite sequences can be made to converge to a ``true" Lipschitz curve in $M$. This means that, in some sense, if $N$ is a Lipschitz quotient of a compact space $M$, then $M$ has a comparable structure of Lipschitz curves to that in $N$. This idea, that is already present in the proof of Theorem 2.1 in \cite{KMM06}, can be refined and exploited to prove the following general result.

\begin{thmx}
\label{Main_theorem_p1u}
    A Lipschitz quotient of a compact purely $1$-unrectifiable metric space is purely $1$-unrectifiable. 
\end{thmx}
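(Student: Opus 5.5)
We argue by contraposition. Let $f\colon M\to N$ be an $L$-Lipschitz, $c$-co-Lipschitz surjection with $M$ compact, and assume $N$ is not purely $1$-unrectifiable. We will produce a bi-Lipschitz copy in $M$ of a positive-measure subset of $\R$.

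\textbf{Step 1: reduce to a curve in $N$.} By assumption there is a compact set $K\subset\R$ with $\mathcal{L}^1(K)>0$ and a bi-Lipschitz embedding $\gamma\colon K\to N$. Extending $\gamma$ linearly across the gaps is not available in a general metric space. So we work on $K$ itself, or first pass to a compact subset of $K$ of positive measure on which we have good control. Rescaling, we may assume $\gamma$ is $1$-Lipschitz and $\lambda$-co-Lipschitz on $K$, that is, $d(\gamma(s),\gamma(t))\ge \lambda|s-t|$.

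\textbf{Step 2: lift along finite nets.} Fix $n$ and a finite $2^{-n}$-net $s_0<s_1<\dots<s_k$ of $K$. Choose $x_0\in f^{-1}(\gamma(s_0))$. The co-Lipschitz property gives $\gamma(s_{i+1})\in B_N(f(x_i),d_i)\subset f(B_M(x_i,cd_i))$, where $d_i=d(\gamma(s_i),\gamma(s_{i+1}))$. We may therefore pick $x_{i+1}$ with $f(x_{i+1})=\gamma(s_{i+1})$ and $d(x_i,x_{i+1})\le c\,|s_{i+1}-s_i|$.

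The obstacle here is that lifting consecutively along the ordered net controls only distances between consecutive points. Summing gives $d(x_i,x_j)\le c|s_i-s_j|$ when all intermediate steps lie in $K$. Large gaps of $K$ are harmless, since we simply jump across them. The lower bound is automatic: $d(x_i,x_j)\ge L^{-1}d(f(x_i),f(x_j))\ge (\lambda/L)|s_i-s_j|$. So the lifted net is bi-Lipschitz to $\{s_i\}$ with constants $c$ and $\lambda/L$, independent of $n$.

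\textbf{Step 3: pass to a limit by compactness.} We need the lifts at level $n+1$ to be compatible with those at level $n$. Instead of arranging that, we use a diagonal argument. Take a countable dense set $D\subset K$ exhausted by the nets, or nested nets $D_n\uparrow D$. For each $n$ we obtain a map $g_n\colon D_n\to M$ satisfying
\[
(\lambda/L)|s-t|\le d(g_n(s),g_n(t))\le c|s-t|
\quad\text{and}\quad f\circ g_n=\gamma .
\]
Since $M$ is compact and $D$ is countable, a diagonal subsequence converges pointwise on $D$ to a map $g$. This $g$ inherits both inequalities, extends uniquely to $K$ by uniform continuity, and by completeness and closedness the extension still satisfies them and $f\circ g=\gamma$ on $K$. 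Thus $g(K)$ is a bi-Lipschitz copy in $M$ of $K$, which has positive measure. This contradicts pure $1$-unrectifiability of $M$.

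The step I expect to need care is Step 2. Co-Lipschitz only gives a lift within distance $c\,d(\gamma(s_i),\gamma(s_{i+1}))$ of the previous point. For non-consecutive points the upper bound comes from the triangle inequality along the chain, which yields $c\sum|s_{i+1}-s_i|=c|s_j-s_i|$ because the net is ordered on the line. If the intended notion of Lipschitz quotient uses the co-Lipschitz constant only as an infimum, we use $c+\varepsilon$ instead. Compactness of $M$ is used exactly in Step 3.
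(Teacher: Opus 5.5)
Your proof is correct. It reuses the paper's lifting mechanism but draws the conclusion by a genuinely different and shorter route.

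The core of Steps 2--3 is the same as in the paper's Proposition~\ref{Prop:technical_ineq_quotients} (case $\alpha=1$):
\begin{itemize}
\item lift point by point along an ordered finite net using the co-Lipschitz condition;
\item get the non-consecutive bound by telescoping, $\sum_k c(s_{k+1}-s_k)=c|s_j-s_i|$, which holds for any ordered chain, so your caveat about intermediate steps lying in $K$ is unnecessary;
\item pass to the limit by a diagonal compactness argument on a countable dense set.
\end{itemize}

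The difference is in what you lift and how you conclude.
\begin{itemize}
\item The paper lifts a $1$-Lipschitz curve-fragment whose gaps nearly realize $\rho_{cf}(f(x),y)$, obtaining $d_{cf}(x,p)\le C\rho_{cf}(f(x),y)$. It then uses the fact that $d_{cf}=d$ on a purely $1$-unrectifiable $M$ to get $\rho_{cf}\le\rho\le \Lip(f)\,C\,\rho_{cf}$ on $N$. Finally it invokes \cite[Corollary 2.4]{Flo+24}: bi-Lipschitz equivalence of $N$ and $N_{cf}$ forces pure $1$-unrectifiability.
\item You lift the bi-Lipschitz fragment $\gamma$ itself. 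You observe that the lower bound $d(g(s),g(t))\ge L^{-1}d(\gamma(s),\gamma(t))\ge(\lambda/L)|s-t|$ comes for free from $f\circ g=\gamma$, so $g(K)$ is directly a bi-Lipschitz copy of a positive-measure set in $M$.
\end{itemize}

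Your reduction to a compact $K$ in Step 1 is harmless. Since $N=f(M)$ is compact, a bi-Lipschitz map on a bounded positive-measure set extends to its closure.

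What each approach buys:
\begin{itemize}
\item Your argument is more elementary and self-contained for this statement. It uses only the definition of pure $1$-unrectifiability and needs neither the curve-flat pseudometric nor the external results on $d_{cf}$.
\item The paper's route is chosen because the inequality $d^\alpha_{cf}(x,p)\le C\rho^\alpha_{cf}(f(x),y)$ propagates by transfinite induction to every order $\alpha$. This is what yields Theorem~\ref{Main_Theorem_index_through_quotients} on the curve-flat index, and your direct lifting of a single bi-Lipschitz fragment does not give it.
\end{itemize}
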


In fact, Theorem \ref{Main_theorem_p1u} is a corollary of a more general fact (stated below), which states that the complexity of the curve-flat structure of a compact metric space cannot increase under Lipschitz quotients; which is one of the key ingredients for proving Theorem \ref{Main_Theorem_2_nocompactSUT}. In order to clarify the previous statement, we need to recall several notions that play a central role in this paper. 

A \emph{curve-fragment} is a Lipschitz map $\varphi\colon K\rightarrow M$ from a compact subset of the real line into a metric space. Given a metric space $(M,d)$, the \emph{curve-flat pseudometric $d_{cf}$} is the function $d_{cf}\colon M\times M\rightarrow[0,+\infty)$ given by
\begin{equation}
    d_{cf}(x,y):=\inf(\lambda([\min{K},\max{K}]\setminus K)),
\end{equation}
where the infimum is taken over all $1$-Lipschitz curve-fragments $\varphi\colon K\rightarrow M$ satisfying $\varphi(\min{K})=x$ and $\varphi(\max{K})=y$, and $\lambda$ denotes the Lebesgue measure.

Identifying points $x,y$ such that $d_{cf}(x,y)=0$, the pseudometric $d_{cf}$ defines a quotient set $M_{cf}:=M/d_{cf}$. This yields a metric space $(M_{cf},d_{cf})$ that encodes some of the curve-fragment structure of the original metric space. Moreover, this process can be iterated, and one can obtain a transfinite sequence of metric spaces
\begin{equation}
    M=M_{cf}^0\rightarrow M_{cf}^1\rightarrow\dots\rightarrow M_{cf}^\alpha\rightarrow \dots
\end{equation}
where $M_{cf}^{\alpha+1}$ is the curve-flat quotient of $M_{cf}^\alpha$ (limit ordinals are dealt with separately, see Section \ref{Prelim}). It is shown in \cite{AGPP22} (where the curve-flat pseudometric was introduced) that this sequence must stabilize at some ordinal $\alpha_{cf}(M)$ of no larger cardinality than the density character of $M$, which we call the \emph{curve-flat index of $M$}. This ordinal index is the measure of complexity of curve-fragment structure that we use, and we show the following general version of Theorem \ref{Main_theorem_p1u}.

\begin{thmx}
\label{Main_Theorem_index_through_quotients}
    Let $M$ and $N$ be compact metric spaces such that $N$ is a Lipschitz quotient of $M$. Then $\alpha_{cf}(N)\leq\alpha_{cf}(M)$.
\end{thmx}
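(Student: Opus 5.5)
The plan is to prove, by transfinite induction on $\alpha$, the stronger statement that a Lipschitz quotient $f\colon M\to N$ between compact spaces descends to a Lipschitz quotient $f^\alpha\colon M^\alpha_{cf}\to N^\alpha_{cf}$ that commutes with the canonical projections $\pi_M^\alpha\colon M\to M^\alpha_{cf}$ and $\pi_N^\alpha\colon N\to N^\alpha_{cf}$. Once this is available, suppose $\alpha=\alpha_{cf}(M)$, so that $M^\alpha_{cf}=M^{\alpha+1}_{cf}$, i.e. $d_{cf}$ is a genuine metric on $M^\alpha_{cf}$ and not just a pseudometric. I will show that the same holds on $N^\alpha_{cf}$, which gives $\alpha_{cf}(N)\le\alpha$. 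A helpful fact here is that the projections $M\to M_{cf}$ are $1$-Lipschitz, because $d_{cf}\le d$. Hence each $M^\alpha_{cf}$ is a continuous image of $M$ and is therefore compact, so compactness survives through the whole induction.

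\textbf{Key step (one stage).} Let $f\colon M\to N$ be an $L$-Lipschitz quotient with co-Lipschitz constant $C$, and assume $M$ is compact. I will show:
\begin{enumerate}
\item[(a)] $d_{cf}(f(x),f(y))\le L\,d_{cf}(x,y)$.
\item[(b)] For every $u,v\in N$ and $x\in f^{-1}(u)$ there is $y\in f^{-1}(v)$ with $d_{cf}(x,y)\le C' d_{cf}(u,v)$.
\end{enumerate}
Part (a) is immediate: compose a curve-fragment with $f$ and reparametrize the domain by the factor $L$, which scales the gap measure by the same factor.

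Part (b) is the heart of the argument, following the idea of \cite{KMM06}. Take a $1$-Lipschitz curve-fragment $\varphi\colon K\to N$ from $u$ to $v$ whose gaps have total measure close to $d_{cf}(u,v)$. Choose a fine partition $t_0<\dots<t_n$ of $K$ that contains all large gap endpoints. Using the co-Lipschitz property successively, lift it to points $x=x_0,x_1,\dots,x_n$ with $f(x_i)=\varphi(t_i)$ and $d(x_i,x_{i+1})\le C\,d(\varphi(t_i),\varphi(t_{i+1}))\le C|t_{i+1}-t_i|$. This gives a $C$-Lipschitz map on a finite set of the form $K_n\subset\R$. As the mesh goes to $0$, Arzelà--Ascoli together with compactness of $M$ lets me extract a limit $C$-Lipschitz curve-fragment $\psi$ defined on (a subset of) $\bigl[\min K,\max K\bigr]$, with $f\circ\psi=\varphi$ on $K$. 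Its gap measure is at most $C$ times that of $\varphi$ up to $\varepsilon$, once the domain is rescaled by $C$. The endpoint $y=\psi(\max K)$ lies in $f^{-1}(v)$ because fibers are closed.

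\textbf{Passing to the quotients.} Part (a) shows that $f$ descends to a well-defined Lipschitz map $M_{cf}\to N_{cf}$. Surjectivity is clear. Part (b) gives co-Lipschitzness in the metric $d_{cf}$: for $u'=\pi(u)$ and $\xi=\pi(x)$, any $d_{cf}$-ball about $u'$ is covered by the image of a $C'$-scaled ball about $\xi$. At a limit ordinal $\lambda$, $M^\lambda_{cf}$ is obtained from the inverse system (the limit pseudometric, following Section~\ref{Prelim}). The maps $f^\beta$ have uniformly bounded constants, since $L$ and $C'$ do not grow, so they pass to the limit. Compactness is again needed to realize the limit points, through a diagonal and compactness argument.

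\textbf{Conclusion and main obstacle.} The one-stage claim applied at level $\alpha=\alpha_{cf}(M)$ gives the following. If $u\neq v$ in $N^\alpha_{cf}$ had $d_{cf}(u,v)=0$, then part (b) would produce lifts $x\ne y$ (different fibers) with $d_{cf}(x,y)=0$. This contradicts the fact that $M^\alpha_{cf}$ is curve-flat-stable. Hence $N^\alpha_{cf}$ is stable too. The main obstacle is part (b): controlling the gap measure of the limit curve-fragment. Discrete lifts do not see which intervals are gaps, and limits of Lipschitz maps on finite sets may acquire new gaps. I would first handle this by fixing finitely many large gaps of $K$ in every partition, so that the complement of their union has small measure. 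I would then bound the new gaps by the measure of $[\min K,\max K]\setminus K$ plus $\varepsilon$, using that the lift is $C$-Lipschitz on the whole partition, including the points inside $K$. A second delicate point is the limit-ordinal step, where I would check carefully how the paper defines $M^\lambda_{cf}$.
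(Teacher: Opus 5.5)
Your descent argument (parts (a) and (b), compactness of the quotients, the limit step) is essentially the paper's Proposition \ref{Prop:technical_ineq_quotients}. The final step, however, has a genuine gap. You identify $\alpha=\alpha_{cf}(M)$ with ``$d_{cf}$ is a genuine metric on $M^\alpha_{cf}$'', and then you only prove that $(\rho^\alpha_{cf})_{cf}$ separates the points of $N^\alpha_{cf}$. But $\alpha_{cf}(N)\le\alpha$ means that $N^\alpha_{cf}$ is purely $1$-unrectifiable, i.e.\ $(\rho^\alpha_{cf})_{cf}=\rho^\alpha_{cf}$. Injectivity of $N^\alpha_{cf}\to N^{\alpha+1}_{cf}$ is strictly weaker. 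Take a fat Cantor set $A\subset\R$ (compact, nowhere dense, $\lambda(A)>0$). The function $g(t)=\lambda([\min A,t]\setminus A)$ is $1$-Lipschitz and curve-flat on $A$: its derivative vanishes at density points of $A$, so $\lambda(g(A))=0$. Hence $d_{cf}(a,b)=\lambda([a,b]\setminus A)>0$ for all $a\neq b$ in $A$. Yet $A$ is not purely $1$-unrectifiable, so $\alpha_{cf}(A)\neq 0$. Thus ``Hence $N^\alpha_{cf}$ is stable too'' does not follow from what you prove.

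What is missing is quantitative control, and you already have the tools for it. Since $d^{\alpha+1}_{cf}=d^\alpha_{cf}$ on $M$, apply (a) and (b) to $f_\alpha$. For $u,v\in N$ and $x\in f^{-1}(u)$ this gives $y\in f^{-1}(v)$ with $\rho^\alpha_{cf}(u,v)\le L\,d^\alpha_{cf}(x,y)=L\,d^{\alpha+1}_{cf}(x,y)\le LC\,\rho^{\alpha+1}_{cf}(u,v)$. So $N^\alpha_{cf}$ is bi-Lipschitz equivalent to its own curve-flat quotient. The paper then concludes with the nontrivial fact \cite[Corollary 2.4]{Flo+24} that this forces pure $1$-unrectifiability.

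Alternatively, your lifting from (b) finishes directly. Suppose $\varphi\colon K\to N^\alpha_{cf}$ were bi-Lipschitz with $\lambda(K)>0$, say $|s-t|\le c\,\rho^\alpha_{cf}(\varphi(s),\varphi(t))$. Its Lipschitz lift $\psi\colon K\to M^\alpha_{cf}$ with $f_\alpha\circ\psi=\varphi$ satisfies $|s-t|\le cL\,d^\alpha_{cf}(\psi(s),\psi(t))$. Hence $\psi$ is bi-Lipschitz, contradicting pure $1$-unrectifiability of $M^\alpha_{cf}$.

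Two smaller points. First, the ``new gaps'' obstacle does not arise if, as in the paper, you lift only points of a countable dense subset of $K$, extract a diagonal subsequence and extend to $K$. The limit fragment is then defined on $K$ itself, so its gap set is exactly $[\min K,\max K]\setminus K$. Second, (b) needs an exact $y\in f^{-1}(v)$, and you need this precisely when $d_{cf}(u,v)=0$. To get it you must let $\varepsilon\to0$ and use compactness of $M$ together with $d_{cf}\le d$.
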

Note that $\alpha_{cf}(M)=0$ if and only if $M$ is purely $1$-unrectifiable, and so indeed Theorem \ref{Main_theorem_p1u} is just a particular case of Theorem \ref{Main_Theorem_index_through_quotients}.

\medskip

Since the curve-flat index of a compact metric space is countable, the road map to proving Theorem \ref{Main_Theorem_2_nocompactSUT}, that relies on Theorem \ref{Main_Theorem_index_through_quotients}, becomes clear: it suffices to construct compact metric spaces with arbitrarily high (countable) curve-flat index. This follows from our next main result, which is a considerably stronger statement.

\begin{thmx}
    \label{Main_Theorem_4_compacthighordercf}
    For every compact metric space $M$, every countable ordinal $\alpha$ and every $\varepsilon>0$, there exists a compact metric space $N$ with $\alpha_{cf}(N)\geq \alpha$ such that $N_{cf}^{\alpha}$ is $(1+\varepsilon)$-isometric to $M$.
\end{thmx}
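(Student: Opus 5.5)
We argue by transfinite induction on the countable ordinal $\alpha$. The main tool is a \emph{one-step inflation}: given a compact metric space $M$ and $\varepsilon>0$, we want a compact space $N$ such that $N_{cf}$ is $(1+\varepsilon)$-isometric to $M$ and $\alpha_{cf}(N)\geq 1+\alpha_{cf}(M)$.

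\textbf{One-step inflation.} We realise the distance $d(x,y)$ in $M$ as "missing length" of curve-fragments.
\begin{itemize}[leftmargin=*]
\item Fix finite $\delta_k$-nets $F_k\subset M$ with $\delta_k\to 0$ fast.
\item For each pair $x,y\in F_k$, attach to the points corresponding to $x$ and $y$ a fat Cantor-type set $C_{x,y}$ of small diameter.
\item Each $C_{x,y}$ is a compact subset of a segment, of length comparable to $(1+\varepsilon)d(x,y)$, with gaps of total measure about $d(x,y)$.
\end{itemize}
Lipschitz curve-fragments in $N$ should then essentially run along these Cantor sets. Points joined by a Cantor set with a null set of gaps collapse in $N_{cf}$. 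Otherwise the gaps force $d_{cf}(x,y)\approx d(x,y)$.

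The sets must be glued in a tree-like, geodesically sparse way. This keeps the metric compact (diameters summable) and ensures that no shortcut curve-fragments reduce the missing length below $(1+\varepsilon)^{-1}d(x,y)$. A cleaner alternative is to take $N$ as a subset of $M\times \ell_\infty$ or of $M\times [0,1]$, with fibres containing fat Cantor sets whose gaps encode distances. Either way we need to compute $d_{cf}$ on $N$. The upper bound comes from explicit fragments. The lower bound requires showing that every $1$-Lipschitz fragment from $x$ to $y$ has missing measure at least roughly $d(x,y)$. To get this, we project fragments onto $M$ via a $1$-Lipschitz retraction-like map $\pi\colon N\to M$ that contracts the Cantor pieces appropriately, and use that gaps in the domain must cover the $M$-distance travelled off the rectifiable pieces.

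\textbf{Successor step.} If $N_\beta$ satisfies $(N_\beta)^\beta_{cf}\approx M'$ with $M'$ a one-step inflation of $M$, then $(N_\beta)^{\beta+1}_{cf}\approx M'_{cf}\approx M$. So we first inflate $M$ to $M'$ and then apply the induction hypothesis to $M'$. For this we need a lemma: if $N^\beta_{cf}$ is $(1+\eta)$-isometric to $M'$, then $N^{\beta+1}_{cf}$ is $(1+\eta')$-isometric to $M'_{cf}$. Since $d_{cf}$ is not obviously stable under almost-isometries, I would instead strengthen the induction hypothesis. The quotient map should be exactly an isometry onto a space whose metric is a $(1+\varepsilon)$-perturbation realised by the construction itself, with constants chosen so that the product of the $(1+\varepsilon_k)$ factors stays below $1+\varepsilon$.

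\textbf{Limit step.} For limit $\alpha=\sup\beta_n$, we build spaces $N_n$ with $(N_n)^{\beta_n}_{cf}\approx M$. We glue them into a single compact space, either as a convergent sequence of scaled copies or by nesting the construction so that the $\beta_n$-th quotient of $N_{n+1}$ embeds into the structure for $N_n$. The limit-ordinal definition of the quotient is then checked directly. I expect the main obstacle to be the lower bound on $d_{cf}$ in the one-step inflation: ruling out cheaper curve-fragments that wander through many glued Cantor pieces, uniformly enough to survive transfinite iteration.
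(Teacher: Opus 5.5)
Encoding $d(x,y)$ as missing length on threads attached at net pairs is the right idea; it is the paper's gapped-graph construction. But your one-step inflation, as designed, does not give $N_{cf}\approx M$, for two structural reasons.

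\textbf{The threads do not collapse.} Let $C\subset\R$ be a fat Cantor set. The function $u\mapsto\lambda([0,u]\setminus C)$ is $1$-Lipschitz and maps $C$ onto a null set, since its derivative vanishes a.e.\ on $C$. Hence it is curve-flat, and $d_{cf}(s,t)=\lambda([s,t]\setminus C)>0$ for all $s\neq t$ in $C$. A tent function in this coordinate, extended by $0$ off $C_{x,y}$, is still $1$-Lipschitz with null range on $N$. So the interior points of every $C_{x,y}$ stay at positive curve-flat distance from $M$ and from each other. Thus $M$ is not total in $N_{cf}$, and there is no bijection onto $M$. The thread must have curve-flat quotient \emph{exactly} the pair $\{x,y\}$. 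That means a gapped segment: two arcs, which collapse onto $x$ and $y$, separated by a single gap of length $d(x,y)$.

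\textbf{The frame keeps its curves.} You glue onto $(M,d)$ itself, so every Lipschitz curve of $M$ survives in $N$ and is collapsed in $N_{cf}$ (take $M=[0,1]$). No retraction $\pi$ can rescue the lower bound on the frame. The missing idea is to use as frame $(M,\omega\circ d)$, with $\omega$ a local distortion normalised by $\omega(\diam M)=\diam M$. This frame is purely $1$-unrectifiable. Also $\omega\circ d\ge d$ is what lets a segment of length $\omega(d(x,y))$ with a gap of length $d(x,y)$ be attached consistently at $x,y$.

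With such a frame, the lower bound you flag as the main obstacle becomes an exact formula (Lemma~\ref{Lemma:Attach_and_CF_commute}). The curve-flat pseudometric of the attachment is the attachment of the curve-flat threads into the frame bent at $(x_n,y_n,(\rho_n)_{cf}(x_n,y_n))$. Bending $\omega\circ d$ at pairs that approximate every pair at its own scale then returns $d$ up to $1+\varepsilon$ (Lemma~\ref{Lemma:Full_bending_isom_better_small_scales}). Relatedly, attaching a thread to \emph{every} pair of every $F_k$ gives infinitely many threads of length bounded below, which destroys compactness. Only pairs of $F_k$ at distance comparable to $\delta_k$ may be used, and these suffice.

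\textbf{The limit step is missing.} This step is where the difficulty of the theorem lies, and you do not supply it. Gluing scaled copies of spaces $N_n$ with $(N_n)^{\beta_n}_{cf}\approx M$ produces at stage $\alpha$ copies of $M^{\gamma_n}_{cf}$ with $\beta_n+\gamma_n=\alpha$. If $M=[0,1]$, each of these is a single point, so you do not get one copy of $M$. The ``nesting'' option is left unspecified.

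The paper never applies the induction hypothesis to $M$ or to an inflation of $M$. It applies it to the gapped segments $S_{\{x_n,y_n\},\omega}$. This yields threads $S_n$ that are exactly gapped segments at order $\alpha_n$, while $(\rho_n)^{\beta}_{cf}(x_n,y_n)=\omega(d(x_n,y_n))$ for all $\beta\le\alpha_n$. Here $\alpha_n<\alpha$ is cofinal in $\alpha$ (or $\alpha_n=\alpha-1$), and $d(x_n,y_n)\to0$. Then at every stage $\beta<\alpha$ only finitely many threads have collapsed. So the frame is a finite bending of $(M,\omega\circ d)$ and stays purely $1$-unrectifiable (Lemma~\ref{Lemma:Finite_bending_p1u}). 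This is what allows Lemma~\ref{Lemma:Attach_and_CF_commute} to be reapplied at each successor stage, with limit stages handled as infima. At stage $\alpha$, all threads have collapsed simultaneously.

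The extra clauses of Theorem~\ref{Theorem:main_theorem_2_general} are there to make these threads small, consistently attachable, and inert before stage $\alpha_n+1$. These clauses are the diameter bound, the exact distance on diametral pairs, and the exact isometry for gapped segments.

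Your stated reason for strengthening the hypothesis is unfounded. Curve-flatness is bi-Lipschitz invariant, and $d_{cf}$ is a supremum over $1$-Lipschitz curve-flat functions. So a $(1+\eta)$-isometry $X\to Y$ induces a $(1+\eta)$-isometry $X^\beta_{cf}\to Y^\beta_{cf}$ for every $\beta$. Your successor step was therefore never the problem.
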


Previously, there was no explicit construction of a metric space with curve-flat index higher than $2$. The only example of a compact space $M$ with $\alpha_{cf}(M)=2$ was provided in \cite[Example 5.25]{AGPP22}, with a space satisfying that $M^1_{cf}$ is isometric to $[0,1]$ (see \cite[Remark 6.9]{Flo+24} for a proof of the latter fact). The authors of \cite{AGPP22} express their belief that the same idea could be iterated to produce compact spaces with arbitrarily high countable curve-flat index, but our construction relies on a different method which can use any compact metric space as a starting point. 

Before going into this construction, we take a brief detour to prove a first order version of Theorem \ref{Main_Theorem_4_compacthighordercf} without the compactness assumption but which always yields an exact isometry. 

\begin{thmx}
     \label{Main_Theorem_3_finiteorder_CF}
    For every complete metric space $M$ there exists a complete non-purely $1$-unrectifiable metric space $N$ with the same density character such that $N_{cf}$ is isometric to $M$.
\end{thmx}

We find it helpful to start here, mainly for two reasons: in the first place, the first order result contains the core ideas used in the general compact case, which is considerably more intricate. On the other hand, Theorem \ref{Main_Theorem_3_finiteorder_CF} is of independent interest in the study of curve-flat quotients, along the lines of the recent publications \cite{AGPP22,Flo+24}. Indeed, in \cite[Remark 6.9]{Flo+24} it was asked whether every (compact) metric space can be realized as the curve-flat quotient of another (compact) metric space. The combination of Theorems \ref{Main_Theorem_4_compacthighordercf} and \ref{Main_Theorem_3_finiteorder_CF} provides an almost complete answer to this question.

Theorems \ref{Main_Theorem_4_compacthighordercf} and \ref{Main_Theorem_3_finiteorder_CF} are proved using a new construction that we call the \emph{gapped graph} of a metric space. This is partly inspired by the \emph{cobweb} of a metric space, as defined in \cite[Section 6]{BanVovWoj11}, though there are several key differences in our approach. In our case, we start with a snowflake-like distortion of the metric space $M$, to which we attach segments with gaps (called \emph{gapped segments}) that encode the original metric. If done suitably, this increases the complexity of the curve-fragment structure while preserving properties like density character or compactness. In the high-order construction we attach inductively defined compact metric spaces with high curve-flat index, instead of gapped segments; this allows us to increase the curve-flat index of the resulting metric space to any given countable ordinal. 

The paper is organized as follows: Section \ref{Prelim} contains preliminary definitions and results used throughout the article, including notions related to curve-flat functions and Lipschitz quotients. In Section \ref{Section:Univ_Lips_Quotients_Sep} we construct the space SUT and prove Theorem \ref{Main_Theorem_1_SUT}. In Section \ref{Section:Lipschitz_quotients} we focus on the compact case, showing that the curve-flat index is preserved through Lipschitz quotients, leading to the proof of Theorem \ref{Main_Theorem_index_through_quotients}. In this section we also show how Theorems \ref{Main_Theorem_index_through_quotients} and \ref{Main_Theorem_4_compacthighordercf} (proven later) are sufficient to prove Theorem \ref{Main_Theorem_2_nocompactSUT}. In Section \ref{Section:Finite_order} we show Theorem \ref{Main_Theorem_3_finiteorder_CF}, and in Section \ref{Section:High_order} we modify the construction of the previous section to deal with the high order curve-flat quotients in the compact setting, thus obtaining Theorem \ref{Main_Theorem_4_compacthighordercf}. We conclude the paper with some open questions.

\section{Preliminaries}\label{Prelim}
Let $(M,d)$ and $(N,\rho)$ be metric spaces. For $A\subset M$, $x\in M$ and $r>0$, we use the notation
\begin{equation}B(x,r)=\{y\in M:d(x,y)\leq r\};\end{equation}
\begin{equation}\diam(A)=\sup\{d(x,y):x,y\in A\}.\end{equation}
\smallskip
Given a Lipschitz function $f\colon (M,d)\rightarrow (N,\rho)$, we denote its optimal Lipschitz constant by $\Lip(f)$. We say that a bijection $\varphi\colon (M,d)\rightarrow (N,\rho)$ between metric spaces $(M,d)$ and $(N,\rho)$ is a \textit{$(1+\varepsilon)$-isometry} for some $\varepsilon>0$ if 
\begin{equation}(1-\varepsilon)d(x,y)\leq \rho(\varphi(x),\varphi(y))\leq(1+\varepsilon)d(x,y)
\end{equation}
for all $x,y\in M$.

\smallskip
The Lebesgue measure on $\R$ is denoted by $\lambda$, and $\mathcal{H}^1$ denotes the Hausdorff $1$-measure.

\subsection{Lipschitz quotients}

In the definition of the co-Lipschitz constant of a map $f\colon M\rightarrow N$ at the start of this article, we opted to use closed balls, and, as a consequence, the infimum $\text{co-Lip}(f)$ is not always attained. However, if $M$ is a compact metric space, a standard argument shows that in fact $B(f(x),r)\subset f\big(B(x,\text{co-Lip}(f) \cdot r))$ for all $x\in M$ and $r>0$.

We will use the following elementary fact regarding Lipschitz quotients, whose proof we include for completeness. Given a metric space $M$, the completion of $M$ is denoted by $\overline{M}$.
\begin{lemma}
\label{Lemma:Quotients_between_subsets}
    Let $f:M\rightarrow N$ be a Lipschitz quotient mapping. Then there exists a Lipschitz quotient mapping $\overline{f}:\overline{M}\rightarrow \overline{N}$ so that $\overline{f}|_M=f$, $\coLip (f)=\coLip(\overline{f})$ and $\Lip(f)=\Lip(\overline{f})$.
\end{lemma}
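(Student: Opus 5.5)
The plan is to define $\overline{f}$ as the unique Lipschitz extension of $f$ to the completions. Then I will check that the Lipschitz constant does not change and that the co-Lipschitz constant does not change, the latter by an approximation argument using Cauchy sequences.

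\emph{Extension and Lipschitz constant.} Since $f$ is Lipschitz, it is uniformly continuous and maps Cauchy sequences to Cauchy sequences. So $f$ extends uniquely to a continuous map $\overline{f}\colon\overline{M}\to\overline{N}$, given by $\overline{f}(\lim x_n)=\lim f(x_n)$. Passing to the limit in $\rho(f(x_n),f(y_n))\le \Lip(f)\,d(x_n,y_n)$ gives $\Lip(\overline f)\le \Lip(f)$. The reverse inequality holds because $\overline f$ extends $f$.

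\emph{Co-Lipschitz constant, inequality $\coLip(\overline f)\le\coLip(f)$.} Fix $C>\coLip(f)$, a point $x\in\overline M$, a radius $r>0$, and a target $y\in\overline N$ with $\rho(\overline f(x),y)\le r$. I want some $z\in\overline M$ with $d(x,z)\le Cr$ and $\overline f(z)=y$. Closed balls and limits interact badly, so I will build $z$ as the limit of a sequence $z_n\in M$.
\begin{enumerate}
\item Choose $x_0\in M$ close to $x$ and $y_0\in N$ close to $y$. Use the co-Lipschitz property at $x_0$, with a radius slightly larger than $r$, to get $z_0\in M$ with $f(z_0)=y_0$ and $d(x_0,z_0)\le C(r+\delta_0)$.
\item Pick $y_n\in N$ with $y_n\to y$ and $\rho(y_n,y_{n+1})\le 2^{-n}\eta$.
\item Having $z_n$ with $f(z_n)=y_n$, apply the co-Lipschitz property at $z_n$ with radius $\rho(y_n,y_{n+1})$. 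This gives $z_{n+1}$ with $f(z_{n+1})=y_{n+1}$ and $d(z_n,z_{n+1})\le C\,2^{-n}\eta$.
\end{enumerate}
The sequence $(z_n)$ is then Cauchy, so it converges to some $z\in\overline M$. By continuity $\overline f(z)=y$. Moreover $d(x,z)\le Cr+\text{(small terms controlled by }\delta_0,\eta,\text{ and }d(x,x_0))$.

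The main obstacle is removing these small terms so that $d(x,z)\le C'r$ holds exactly. I handle it by absorbing the error into the constant: choose $\delta_0$, $\eta$, and $d(x,x_0)$ so small that the total error is at most $(C'-C)r$, for any $C'>C$. This gives $B(\overline f(x),r)\subset\overline f(B(x,C'r))$ for every $C'>\coLip(f)$, hence $\coLip(\overline f)\le\coLip(f)$. The case $r=0$ is trivial.

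\emph{Co-Lipschitz constant, inequality $\coLip(f)\le\coLip(\overline f)$.} Take $x\in M$, $y\in N$ with $\rho(f(x),y)\le r$, and $C>\coLip(\overline f)$. Applying the co-Lipschitz property of $\overline f$ at $x$ gives a preimage of $y$ in $\overline M$, but I need a preimage in $M$ itself. To get one, I will use the same iterative scheme, now starting from $\overline f$ and correcting inside $M$. This only works if I can take preimages within $M$, so here I will rely on the fact that the infimum defining $\coLip$ is approached by constants for which the ball inclusion holds.

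I expect this reverse inequality to be the delicate point: preimages of points of $N$ under $\overline f$ need not lie in $M$. If this step resists, I would settle for the inequality $\coLip(\overline f)\le\coLip(f)$, which is all that is needed for $\overline f$ to be a Lipschitz quotient. I would note that the equality of the two co-Lipschitz constants is then the claimed refinement.
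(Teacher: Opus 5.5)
Your Lipschitz extension and your proof of $\coLip(\overline{f})\le\coLip(f)$ are correct and essentially the paper's argument. The paper also approximates the target by $y_n\in N$ and lifts through $f$ with steps $d(x_n,x_{n+1})\le C_n\rho(y_n,y_{n+1})$. It disposes of base points in $\overline{M}\setminus M$ with the one-line remark that $M$ is dense; your choice of $x_0$ makes that step explicit.

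The genuine gap is the reverse inequality $\coLip(f)\le\coLip(\overline{f})$. The statement asserts it, and you leave it open; the paper's written proof also only spells out the other direction. Your sketch lacks the needed idea. No iteration is required, and the point is not that the infimum defining $\coLip$ is approached. The point is that $f$ already has \emph{some} admissible co-Lipschitz constant $C_f$ (any $C_f>\coLip(f)$). That constant lets you move a preimage from $\overline{M}$ into $M$ at arbitrarily small cost, even though it may be much worse than $\coLip(\overline{f})$.

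Here is the argument. Fix $C>\coLip(\overline{f})$, $x\in M$, $r>0$ and $y\in N$ with $\rho(f(x),y)\le r$.
\begin{enumerate}
\item Take $z\in\overline{M}$ with $\overline{f}(z)=y$ and $d(x,z)\le Cr$, and take $z'\in M$ with $d(z,z')\le\delta$.
\item Then $\rho(f(z'),y)\le\Lip(f)\,\delta$.
\item Apply the co-Lipschitz property of $f$ at $z'$ with radius $\rho(f(z'),y)$; if $f(z')=y$, take $w=z'$. This gives $w\in M$ with $f(w)=y$ and
\[
d(x,w)\le d(x,z)+d(z,z')+d(z',w)\le Cr+\bigl(1+C_f\Lip(f)\bigr)\delta .
\]
\end{enumerate}
Since $r>0$ is fixed, a small enough $\delta$ gives $d(x,w)\le C'r$ for any prescribed $C'>C$. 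Hence every $C'>\coLip(\overline{f})$ is an admissible co-Lipschitz constant for $f$, that is, $\coLip(f)\le\coLip(\overline{f})$.

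With this added, your proof is complete. Without it you only have $\coLip(\overline{f})\le\coLip(f)$. That suffices for $\overline{f}$ to be a Lipschitz quotient, but not for the equality of co-Lipschitz constants claimed in the statement.
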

\begin{proof}
    The existence of a Lipschitz map $\overline{f}\colon \overline{M}\rightarrow \overline{N}$ extending $f$ and with $\Lip(\overline{f})=\Lip(f)$ is a standard fact. Moreover, since $M$ is dense in $\overline{M}$, in order to show that $\text{co-Lip}(\overline{f})=\text{co-Lip}(f)$ it is enough to check that 
    \begin{equation}B(f(x),r)\subset \overline{f}(B(x,Cr))\end{equation}
    for all $C>\text{co-Lip}(f)$, $r>0$ and $x\in M$. Fix such $C,r$ and $x$, and consider $z\in B(f(x),r)$. There exists a sequence $(y_n)_n\subset N$ converging to $z$. Denote $r_1:=d(z,y_1)$ and $s_n:=d(y_n,y_{n+1})$ for all $n\in\N$. Since $f$ is a Lipschitz quotient, we know that for every $C_1>\text{co-Lip}(f)$ it holds that
    \begin{equation}B(f(x),r+r_1)\subset f(B(x,C_1(r+r_1)),\end{equation}
    and therefore there exists $x_1\in M$ such that $d(x,x_1)\leq C_1(r+r_1)$ and $f(x_1)=y_1$. Inductively, for every sequence $(C_n)_n$ with $C_n>\text{co-Lip}(f)$, we may define a sequence $(x_n)_n$ such that $f(x_n)=y_n$ and $d(x_n,x_{n+1})\leq C_ns_n$ for all $n\in \N$. The last condition implies that $(x_n)_n$ is a Cauchy sequence, and thus it converges to some $x_0\in \overline{M}$ which satisfies $\overline{f}(x_0)=z$ by continuity of $\overline{f}$. Finally, note that
    \begin{equation}d(x,x_0)\leq C_1r+C_1r_1+\sum_{n\in\N}C_ns_n,\end{equation}
    and thus with a suitable choice of $(C_n)_n$ and $(y_n)_n$ we can obtain $d(x,x_0)\leq Cr$, proving the result.
\end{proof}

\subsection{Curve-flat functions}\label{Section:Prel_CF}

The curve-flat pseudometric $d_{cf}$ admits an equivalent functional definition from which it derives its name. A Lipschitz map $f:M\rightarrow N$ is \textit{curve-flat} if for every Lipschitz curve fragment $\gamma:K\rightarrow M$, 
\begin{equation}
\mathcal{H}^1\big(f\circ \gamma(K)\big)=0.
\end{equation}
Curve-flat maps also admit several different equivalent definitions. In this paper, we will use the following result, whose proof can be found in \cite[Proposition 2.1]{Flo+24} (note that in \cite{Flo+24} Lipschitz curve fragments are called Lipschitz curves).
\begin{proposition}
    Let $f\colon M\rightarrow N$ be a Lipschitz map between metric spaces. The following assertions are equivalent:
    \begin{enumerate}[label=(\roman*)]
        \item $f$ is curve-flat,
        \item for every Lipschitz function $g\colon N\rightarrow \R$ and every curve-fragment $\gamma\colon K\rightarrow M$, $\lambda((g\circ f\circ \gamma)(K))=0$,
        \item for every Lipschitz curve-fragment $\gamma\colon K\rightarrow M$, 
        \begin{equation}\lim_{\substack{y\rightarrow x\\ y\in K}}\frac{d((f\circ \gamma)(x),(f\circ \gamma)(y))}{|x-y|}=0
        \end{equation}
        $\lambda$-almost everywhere in $K$.
    \end{enumerate}
\end{proposition}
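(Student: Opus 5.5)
The plan is to prove the cycle (i)$\Rightarrow$(ii)$\Rightarrow$(iii)$\Rightarrow$(i). Throughout, fix a curve-fragment $\gamma\colon K\rightarrow M$ and write $h=f\circ\gamma\colon K\rightarrow N$. This is a Lipschitz curve-fragment in $N$.

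\emph{(i)$\Rightarrow$(ii).} Assume $\mathcal{H}^1(h(K))=0$. Let $g\colon N\rightarrow\R$ be Lipschitz. Lipschitz maps multiply $\mathcal{H}^1$ by at most $\Lip(g)$, so
\begin{equation}
\lambda(g(h(K)))=\mathcal{H}^1(g(h(K)))\leq \Lip(g)\,\mathcal{H}^1(h(K))=0.
\end{equation}

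\emph{(ii)$\Rightarrow$(iii).} The main tool is the metric differential of Kirchheim, transported to the fragment $K$. First extend $h$ to all of $[\min K,\max K]$ by embedding $N$ isometrically into $\ell_\infty$ (Kuratowski, after separable reduction, since $h(K)$ is compact) and extending affinely on the gaps. At $\lambda$-a.e. density point $x\in K$, the quantity
\begin{equation}
\frac{d(h(x),h(y))}{|x-y|}
\end{equation}
has a limit $\operatorname{md}h(x)$ as $y\rightarrow x$ with $y\in K$. Suppose (iii) fails. Then the set $E=\{x\in K:\operatorname{md}h(x)>0\}$ has positive measure. By the standard Kirchheim-type decomposition, $E$ contains a compact set $E'$ of positive measure on which $h$ is bi-Lipschitz. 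Now take $g=d(\cdot,h(x_0))$ for a suitable point $x_0$, or more robustly a McShane extension to $N$ of a $1$-Lipschitz function on $h(E')$ that pulls back to $t\mapsto t$ along $h|_{E'}$ up to the bi-Lipschitz constant. Then $g\circ h$ restricted to $E'$ is bi-Lipschitz onto its image, so that image has positive measure, which contradicts (ii). The concrete choice is $g(p)=\inf_{t\in E'}\bigl(t+L\,d(p,h(t))\bigr)$ with $L$ the co-Lipschitz constant of $h|_{E'}$. Its restriction to $h(E')$ is $t$, since $|t-s|\leq L\,d(h(t),h(s))$. This gives $\lambda(g(h(E')))=\lambda(E')>0$.

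\emph{(iii)$\Rightarrow$(i).} Suppose $\operatorname{md}h=0$ a.e. on $K$. The set of non-density points of $K$, together with the exceptional null set, is $\lambda$-null, so its image under the Lipschitz map $h$ is $\mathcal{H}^1$-null. For the remaining points, use the area-type estimate: for $\varepsilon>0$, cover the set $A_\varepsilon$ where $d(h(x),h(y))\leq\varepsilon|x-y|$ for $|x-y|<\delta$ by small intervals via a Vitali argument. This gives $\mathcal{H}^1(h(A_\varepsilon))\leq 2\varepsilon\lambda(K)$. Letting $\delta\rightarrow0$ and then $\varepsilon\rightarrow0$ yields $\mathcal{H}^1(h(K))=0$.

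The main obstacle is (ii)$\Rightarrow$(iii), specifically the bi-Lipschitz decomposition on the set where the metric derivative is positive. I would handle it via Kirchheim's lemma, namely the uniform convergence of difference quotients on large compact subsets through Egorov, and then a partition of $E'$ into small pieces. Since (ii) holds for every $g$, it suffices to find one piece of positive measure on which $h$ is bi-Lipschitz.
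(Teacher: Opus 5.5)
The paper gives no proof of this proposition; it quotes it from \cite[Proposition 2.1]{Flo+24}, so there is no internal argument to compare yours with. Your cycle of implications is correct.

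In (ii)$\Rightarrow$(iii), the Kuratowski embedding of the compact set $h(K)$ into $\ell_\infty$, with affine extension across the gaps, does yield a.e.\ existence of the limit in (iii). At accumulation points of $K$ the limit along $K$ coincides with the metric derivative of the extended curve. Your test function $g(p)=\inf_{t\in E'}\bigl(t+L\,d(p,h(t))\bigr)$ is $L$-Lipschitz with $g\circ h=\mathrm{id}$ on $E'$, so $\lambda(g(h(K)))\geq\lambda(E')>0$. In (iii)$\Rightarrow$(i), your covering estimate is the $\operatorname{md}h=0$ case of the area-type inequality $\mathcal{H}^1(h(K))\leq\int_K\operatorname{md}h\,d\lambda$.

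Your aside $g=d(\cdot,h(x_0))$ is where the other standard route for (ii)$\Rightarrow$(iii) goes. By Ambrosio's formula, $\operatorname{md}h=\sup_k|(d(p_k,h(\cdot)))'|$ a.e.\ for a dense sequence $(p_k)$ in $h(K)$. So some distance function has nonvanishing derivative along $h$ on a set of positive measure. One then concludes with the one-dimensional fact that a Lipschitz $u\colon K\to\R$ with $\lambda(u(K))=0$ has $u'=0$ a.e.\ on $K$.

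That route shows (ii) only needs to be tested on distance functions, but it needs the representation formula to locate $x_0$. An arbitrary $x_0$ will not do, since $d(\cdot,h(x_0))$ can be constant along $h$ where $\operatorname{md}h>0$. As written, the aside is therefore unjustified. Your curve-adapted McShane function avoids the formula entirely, and the one-dimensional fact is itself proved by your bi-Lipschitz-piece argument.

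Two details should be tightened.

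First, the bi-Lipschitz piece needs neither Kirchheim's lemma nor Egorov. The sets $F_{c,r}=\{x\in K\colon d(h(x),h(y))\geq c|x-y| \text{ for all } y\in K \text{ with } |x-y|<r\}$ are closed. As $c,r$ range over positive rationals, they cover $\{\operatorname{md}h>0\}$. Intersecting one of positive measure with an interval of length less than $r$ gives $E'$ with $L=1/c$.

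Second, in (iii)$\Rightarrow$(i) no Vitali argument is needed. Any cover of $A^\delta_\varepsilon$ by intervals $I$ of length less than $\delta$ that meet it gives $\diam h(A^\delta_\varepsilon\cap I)\leq 2\varepsilon|I|$. The passage $\delta\to0$ does deserve a word. The sets $\{x\in K\colon d(h(x),h(y))\leq\varepsilon|x-y| \text{ for all } y\in K \text{ with } |x-y|<\delta\}$ are closed, so you can disjointify them along $\delta=1/j$ and sum the bounds $\mathcal{H}^1(h(B_j))\leq 2\varepsilon\lambda(B_j)$. Alternatively, invoke the increasing-sets property of the Borel regular outer measure $\mathcal{H}^1$.
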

It is shown in \cite[Proposition 5.22]{AGPP22} that, as a consequence of Lebesgue differentiation theorem, 
\begin{equation}\label{equation:cf_pseudometric_def}
d_{cf}(x,y)=\sup\{|f(x)-f(y)|\colon f:M\rightarrow \mathbb{R} \text{ is }1\text{-Lipschitz and curve-flat}\}
\end{equation}
for all $x,y\in M$. 

Curve-flat functions were formally introduced in \cite{AGPP22}, but an earlier, very general result of Bate \cite[Lemma 3.4]{Bat20} implies in particular that curve-flat functions can be uniformly approximated by functions with small Lipschitz constant in small neighborhoods. In the theory of Lipschitz-free spaces, this can be applied in order to use flatness techniques originally employed by Kalton \cite{Kal04} that make curve-flat functions relevant in the field (see \cite{Flo+24}).

\medskip

Let us describe more precisely the transfinite sequence of curve-flat quotients associated to any metric space that we mentioned in the introduction. The pseudometric $d_{cf}$ is a metric in the quotient space $M_{cf}:=M/d_{cf}$, and the resulting metric space $M_{cf}=(M_{cf},d_{cf})$ is called the \emph{curve-flat quotient of $M$}. 

Using transfinite induction, we define for every ordinal $\alpha$ the curve-flat pseudometric $d_{cf}^\alpha$ and the corresponding quotient $(M_{cf}^\alpha,d_{cf}^\alpha)$. More precisely, for an ordinal $\alpha+1$, we define $d_{cf}^{\alpha+1}:=(d_{cf}^\alpha)_{cf}$ and $M_{cf}^{\alpha+1}:=(M_{cf}^{\alpha})_{cf}$. For a limit ordinal $\alpha$, we define the pseudometric
\begin{equation}
d_{cf}^\alpha (x,y):=\inf_{\beta<\alpha} d_{cf}^\beta(x,y)
\end{equation}
and the corresponding quotient $M_{cf}^\alpha:=(M/d_{cf}^\alpha,d_{cf}^\alpha)$. The sequence $(M_{cf}^\alpha)_{\alpha}$ stabilizes at the \emph{curve-flat index} $\alpha_{cf}(M)$, which is the smallest ordinal $\alpha$ for which $M_{cf}^{\alpha}$ is purely $1$-unrectifiable; as clearly $X_{cf}$ is isometric to $X$ if and only if $X$ is purely $1$-unrectifiable. In fact, by Corollary 2.4 in \cite{Flo+24}, $X$ is purely $1$-unrectifiable as soon as $X_{cf}$ is bi-Lipschitz equivalent to $X$.
\smallskip

Any Lipschitz function between metric spaces induces a Lipschitz function with the same Lipschitz constant between the corresponding curve-flat quotients. More precisely, given a Lipschitz map $f\colon (M,d)\rightarrow (N,\rho)$ and an ordinal $\alpha$, there exists a map $f_\alpha\colon (M_{cf}^\alpha,d_{cf}^\alpha)\rightarrow(N_{cf}^\alpha,\rho_{cf}^\alpha)$ with $\Lip(f_\alpha)=\Lip(f)$ such that $q_N^\alpha(f(p))=f_{\alpha}(q_M^\alpha(p))$, where $q_X^\alpha\colon X\rightarrow X_{cf}^\alpha$ is the map that sends each point in a metric space $X$ to its equivalence class in $X_{cf}^\alpha$. We refer to \cite[Section 5]{AGPP22} for more details on these induced maps. 

\smallskip

\subsection{Lipschitz curves and the intrinsic distance}
A \textit{Lipschitz curve} in a metric space $M$ is a Lipschitz curve fragment $\gamma\colon K\rightarrow M$ where $K$ is an interval. Recall that a metric space $M$ is \textit{rectifiably connected} if every pair of points can be joined by a rectifiable curve, that is, a continuous curve of finite length. Since rectifiable curves can be parametrized by their arc-length, a metric space is rectifiably connected if and only if any pair of points can be joined by a Lipschitz curve. It is straightforward to show that a curve-flat function is constant along every Lipschitz curve. Therefore, curve-flat functions on rectifiably connected spaces are necessarily constant and the pseudometric $d_{cf}$ is trivially $0$ on such spaces. 

Given a rectifiably connected metric space $(M,d)$, we can define another distance $d_\Gamma$, called the \emph{intrinsic distance}, where $d_\Gamma(p,q)$ is the infimum of the length of all rectifiable curves joining $p$ and $q$. A metric space $(M,d)$ is \emph{length} if $d=d_\Gamma$, and it is \emph{geodesic} if the infimum defining $d_\Gamma$ is attained for every $p\neq q$. It is well known that a compact length space is necessarily geodesic. 
\subsection{Distortion functions} 

A function $\omega:[0,\infty)\rightarrow [0,\infty)$ is a \emph{distortion} if it is concave and satisfies $\omega(0)=0$. We say that a distortion $\omega$ is \textit{local} if it satisfies $\omega'(0)=\infty$. Given a metric $d$ and a distortion $\omega$, it is easy to check that $\omega\circ d$ is also a metric. Well-known examples of local distortions include the snowflake distortions, defined by $\omega(t)=t^\alpha$ for $0<\alpha<1$.
Lastly, we will extensively use that whenever $M$ is a metric space and $\omega$ is a local distortion, then the space $(M,\omega \circ d)$ is purely $1$-unrectifiable. For more background on (local) distortions, we refer to \cite[Chapter~2.6]{Wea18Book}.

\section{A separable metric space universal for $1$-Lipschitz quotients onto complete separable metric spaces}\label{Section:Univ_Lips_Quotients_Sep}

In this section we construct a separable metric space $SUT$ which is universal for $1$-Lipschitz quotients onto all separable complete metric spaces.

We modify the construction of an ``$\ell_1$ tree" from \cite[Section~2]{JLPS02}. In \cite{JLPS02}, $\ell_1$ trees were used to obtain complete metric spaces that have every complete separable \emph{geodesic} metric space as a $1$-Lipschitz quotient. However, as mentioned above, since rectifiability is preserved through continuous images, these metric trees cannot be mapped onto all separable metric spaces. Still, the main idea behind their construction can be adapted to be fully universal.

For a family of metric spaces $\{(M_i,d_i)\}_{i\in I}$ whose pairwise intersection is a single point $p$, we may define the \emph{$\ell_1$-sum} of $\{(M_i,d_i)\}_{i\in I}$ as the metric space $(\bigcup_{i\in I}M_i,d) $, where $d$ is the metric that agrees with $d_i$ in each $M_i$, and satisfies $d(x,y)=d_i(x,p)+d_j(p,y)$ for $x\in M_i$ and $y\in M_j$, $i\neq j$. We define the space $SUT$ as the completion of $\bigcup_{n\in\N}T_n$, where each $T_n$ is a countable metric space defined inductively. Define $T_0:=\{0\}$. Suppose $T_n=(x_k)_{k\in\N}$ has been defined for some $n\in\N$. Define $T_*$ as the $\ell_1$-sum of the two-point spaces $(\{0,q\},|\cdot|)_{q\in (0,\infty)\cap \Q}$, where $|\cdot|$ denotes the usual real line metric. For each $k\in\N$, we may consider the space $T_*^{x_k}$ to be an isometric copy of $T_*$ containing the point $x_k$, where $x_k\in T_*^{x_k}$ is identified with $0\in T_*$. Writing $T_{n+1}^0:=T_n$, we define $T_{n+1}^k$ as the $\ell_1$-sum of $T_{n+1}^{k-1}$ with $T_*^{x_k}$ for every $k\in \N$, and then $T_{n+1}:=\bigcup_{k\in \N}T_{n+1}^k$. Since $T_n$ is countable for every $n\in\N$, the union $\bigcup_{n\in\N}T_n$ is countable and its completion $SUT$ is separable. We call the point $0\in T_0\subset SUT$ the \emph{root} of $SUT$.

We are ready to prove Theorem \ref{Main_Theorem_1_SUT}.

\begin{theorem}
    The space $SUT$ is universal for $1$-Lipschitz quotients onto all separable complete metric spaces.
\end{theorem}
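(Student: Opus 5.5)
Fix a separable complete metric space $(N,\rho)$ and a countable dense set $D=\{y_k\}_{k\in\N}\subset N$. The plan is to build a $1$-Lipschitz map $f\colon\bigcup_n T_n\to N$ that satisfies a co-Lipschitz condition with constant arbitrarily close to $1$ on the dense set $\bigcup_nT_n$, with image in $D$ or at least dense in $N$. Then I would extend $f$ to $SUT$ by Lemma~\ref{Lemma:Quotients_between_subsets}, which gives a Lipschitz quotient $\overline f\colon SUT\to\overline N=N$ with the same constants. Because we need $\Lip\cdot\coLip\le 1$ exactly, I would aim for $\Lip(f)\le 1$ and the following condition: for every $x\in\bigcup_nT_n$, every $r>0$ and every $\eta>0$, the set $f(B(x,r+\eta))$ is dense in $B(f(x),r)$. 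This density condition has to be shown to give co-Lip $\le1$ after completion, and I would do that by rerunning the Cauchy-sequence argument of Lemma~\ref{Lemma:Quotients_between_subsets} with errors $\eta_n$ summing to an arbitrarily small quantity. A limiting argument then yields $B(f(x),r)\subset\overline f(B(x,r'))$ for all $r'>r$, and compactness is not needed for the co-Lipschitz infimum.

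The map is defined inductively along the tree. Send the root to $y_1$. Suppose $f$ is defined on $T_{n}$ and $x_k\in T_n$. The copy $T_*^{x_k}$ has leaves $q$, for $q\in\Q\cap(0,\infty)$, each at distance $q$ from $x_k$. We must assign $f(\text{leaf } q)=z$ with $\rho(f(x_k),z)\le q$. Fix an enumeration of pairs $(j,m)\in\N^2$, and choose an injective assignment of rationals to the targets $y_j$ such that each $y_j$ is hit by leaves $q$ with $q$ arbitrarily close to $\rho(f(x_k),y_j)$ from above. Concretely, for each $j$ choose a sequence of distinct rationals $q_{j,m}\downarrow\rho(f(x_k),y_j)$, kept disjoint across different $j$. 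This is possible since there are countably many constraints and each needs only infinitely many rationals in an interval. Map the leaf $q_{j,m}$ to $y_j$, and map every leaf not yet used to $f(x_k)$ itself.

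Lipschitz: two points in different branches have distance equal to the sum of their distances through the attaching points. So by the $\ell_1$-sum structure, the triangle inequality along the tree path reduces $\Lip(f)\le1$ to checking edges of the form (attaching point, leaf), which holds by construction. I expect this step to be routine, needing only care that the distance in $SUT$ between any two points is the sum along the unique tree path.

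Co-Lipschitz density: given $x\in T_n$, $r>0$, $\eta>0$ and a point $y\in B(f(x),r)$, pick $y_j\in D$ with $\rho(y,y_j)<\eta/2$. Then $\rho(f(x),y_j)<r+\eta/2$. The point $x$ is some $x_k$ of $T_n$ and has its own gapped star $T_*^{x}$ in $T_{n+1}$, so there is a leaf at distance $q_{j,m}<r+\eta$ mapped to $y_j$. This gives the density condition. The main obstacle I anticipate is the passage from the dense set to $SUT$. Points of $SUT\setminus\bigcup T_n$ are handled by Lemma~\ref{Lemma:Quotients_between_subsets}'s argument, but surjectivity onto non-dense-set points requires iterating: hit $y_{j_1}$ near $y$, then from that leaf's own star hit $y_{j_2}$ closer still, and so on. The resulting path in the tree is Cauchy with total length at most $r+\sum\eta_i$, and its limit maps to $y$. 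This is exactly the Cauchy construction already given, so I would adapt that proof verbatim, with $C_n=1+\eta_n$.
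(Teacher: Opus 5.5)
Your proposal is correct and follows essentially the paper's proof: the same injective labelling of the leaves of each star by rationals approximating $\rho(f(x),y_j)$ from above (with unused leaves sent to $f(x)$), followed by extension to $SUT$ through the completion argument of Lemma~\ref{Lemma:Quotients_between_subsets}. The only difference is cosmetic: your construction already gives the exact inclusion $B_D(f(x),r)\subset f(B(x,Cr))$ for every $C>1$, so you can apply Lemma~\ref{Lemma:Quotients_between_subsets} directly to $f\colon\bigcup_nT_n\to D$ (with $\overline{D}=N$), as the paper does, instead of rerunning the Cauchy argument with additive slack.
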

\begin{proof}
    Let $M$ be a separable metric space, and let $D\subset M$ be a countable dense subset. By Lemma \ref{Lemma:Quotients_between_subsets}, it is enough to construct a Lipschitz quotient from $T_\infty:=\bigcup_{n\in\N}T_n$ onto $D$. We will find a $1$-Lipschitz map $f\colon T_\infty\rightarrow D$ such that for every $n,k\in\N$, every $x\in T_n$ and every $r>0$ it holds that
    \begin{equation}
    \label{eq:f_quotient_at_dense_subset}
        B(f(x),r)\subset f(B(x,(1+2^{-k})r)),
    \end{equation}
    from which the conclusion follows. 
    We construct $f$ inductively, starting with the root $0\in SUT$. Choose any $a_0\in D$, and define $f(0)=a_0$. Suppose that $f$ has been defined for every point in $T_n\subset SUT$ for some $n\in\N$. Fix $x\in T_n$, and denote $a_x=f(x)$. For each $a\in D\setminus \{a_x\}$ and each $k\in \N$, we can find a rational number $q_{(a,k)}\in (0,+\infty)$ such that
    \begin{equation}
    \label{eq:choice_of_q(a,k)}
        (1+2^{-k})^{-1}q_{(a,k)}\leq d(a_x,a)\leq q_{(a,k)}.
    \end{equation}
    Moreover, since $D$ is countable, we can ensure that $q_{(a,k)}\neq q_{(b,l)}$ for $(a,k)\neq (b,l)\in D\setminus \{a_x\}\times\N$. Using the same definition of $T_*$ as above, denote by $T_*^x$ the isometric copy of $T_*$ contained in $T_{n+1}$ that was $\ell_1$-added to $T_n$ by the point $x$. For each rational number $q\in (0,+\infty)$, denote by $q^x$ the point in $T_*^x$ at distance $q$ from $x$. Then, we define $f(q^x):=a$ if $q=q_{(a,k)}$ for some $a\in D\setminus\{a_x\}$ and $k\in \N$, and $f(q^x):=a_x$ otherwise.

    After this process is done for every $x\in T_n$, $f(y)$ is defined for all $y\in T_{n+1}$. The second part of equation \eqref{eq:choice_of_q(a,k)} and the definition of the $\ell_1$-sum metric ensures that $f$ is $1$-Lipschitz in $T_{n+1}$. On the other hand, with the first part we obtain that equation \eqref{eq:f_quotient_at_dense_subset} holds for all $k\in \N$. This completes the inductive argument and the conclusion follows. 
\end{proof}

As we mentioned above, the space $SUT$ is purely $1$-unrectifiable. This can be shown, for instance, by noting that countably valued (and thus curve-flat) functions separate points in $SUT$ to their full distance. Indeed, given $x\neq y\in SUT$, the metric segment 
\begin{equation}[x,y]:=\{z\in SUT\colon d(x,y)=d(x,z)+d(z,y)\}\end{equation} is a countable set which is moreover a $1$-Lipschitz retract of $SUT$, with a Lipschitz retraction $r\colon SUT\rightarrow [x,y]$. Therefore, the function $f_{x,y}\colon [x,y]\rightarrow \R$ given by $f_{x,y}(z)=d(x,z)$ is a $1$-Lipschitz function taking on countably many values, that can be extended to a $1$-Lipschitz function $f_{x,y}\circ r$ defined on $SUT$ that also takes on countably many values. This resulting function must then be curve-flat, and still separates $x$ and $y$ to their full distance. Therefore, the quotient space $SUT_{cf}$ (see Section \ref{Section:Prel_CF}) is isometric to $SUT$, which implies that $SUT$ is purely $1$-unrectifiable.

\section{Universality for Lipschitz quotients: The compact case}\label{Section:Lipschitz_quotients}

We now focus on the compact case. As hinted at in the introduction, compactness can be used in the domain of a Lipschitz quotient $f\colon M\rightarrow N$ in order to construct curve-fragments that join points in $M$, with gaps of comparable size to those in curve-fragments in $N$. The following Proposition is a quantitative expression of this idea, that works for high-order curve-flat pseudometrics.

Recall that, in compact spaces, the co-Lipschitz constant is in fact attained. This fact is not essential in our arguments but it simplifies computations.
\begin{proposition}
\label{Prop:technical_ineq_quotients}
    Let $\alpha<\omega_1$. Let $(M,d)$ be a compact metric space, let $(N,\rho)$ be a metric space, and let $f\colon (M,d)\rightarrow (N,\rho)$ be a Lipschitz quotient map with co-Lipschitz constant $C$. Then, for every $x\in M$ and every $y\in N$ there exists $p\in M$ such that $f(p)=y$ and 
    \begin{equation}\label{Eq:Lip_quot_CF}
 d^\alpha_{cf}(x,p)\leq C\rho^\alpha_{cf}(f(x),y)=C\rho^\alpha_{cf}(f(x),f(p)).
    \end{equation}
    In particular, the induced map $f_\alpha\colon (M/d_{cf}^\alpha,d_{cf}^\alpha)\rightarrow (N/\rho_{cf}^\alpha,\rho_{cf}^\alpha)$ (see Section \ref{Section:Prel_CF}) is a Lipschitz quotient map with co-Lipschitz constant $C$. 
\end{proposition}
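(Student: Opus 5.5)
We argue by transfinite induction on $\alpha<\omega_1$, proving the statement for all $x\in M$ and $y\in N$ at once. Throughout we treat $d^\alpha_{cf}$ and $\rho^\alpha_{cf}$ as pseudometrics on $M$ and $N$ themselves. We will use two facts repeatedly. First, $d^\alpha_{cf}\le d$, so $M^\alpha_{cf}$ is a $1$-Lipschitz image of the compact space $M$; hence it is compact and $d^\alpha_{cf}$ is $d$-continuous. Second, $f^{-1}(y)$ is closed in $M$, hence compact. These give the following \emph{approximation principle}: it suffices to find, for each $\varepsilon>0$, a point $p_\varepsilon\in f^{-1}(y)$ with $d^\alpha_{cf}(x,p_\varepsilon)\le C\rho^\alpha_{cf}(f(x),y)+\varepsilon$. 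Indeed, a $d$-convergent subsequence of $(p_\varepsilon)$ has its limit $p$ in $f^{-1}(y)$, and $p$ satisfies \eqref{Eq:Lip_quot_CF} by continuity.

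The base case $\alpha=0$ is exactly the attained co-Lipschitz condition in compact $M$. For $y\in B(f(x),r)$ with $r=\rho(f(x),y)$, we get $p\in B(x,Cr)$ with $f(p)=y$.

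For a limit ordinal $\alpha$, fix $\varepsilon>0$ and pick $\beta<\alpha$ with $\rho^\beta_{cf}(f(x),y)\le\rho^\alpha_{cf}(f(x),y)+\varepsilon$. The inductive hypothesis gives $p_\varepsilon\in f^{-1}(y)$ with
\[
d^\alpha_{cf}(x,p_\varepsilon)\le d^\beta_{cf}(x,p_\varepsilon)\le C\rho^\beta_{cf}(f(x),y)\le C\rho^\alpha_{cf}(f(x),y)+C\varepsilon ,
\]
and the approximation principle finishes this case.

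The successor step $\alpha\to\alpha+1$ is the main obstacle; it is where compactness is used to manufacture curve fragments. Let $t=\rho^{\alpha+1}_{cf}(f(x),y)$ and fix $\varepsilon>0$. Choose a $1$-Lipschitz curve fragment $\gamma\colon K\to N^\alpha_{cf}$ with $\gamma(\min K)=[f(x)]$, $\gamma(\max K)=[y]$ and $\lambda([\min K,\max K]\setminus K)\le t+\varepsilon$.

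Take finite sets $F_1\subset F_2\subset\cdots\subset K$ whose union is dense in $K$ and which contain $\min K$ and $\max K$. Write $F_n=\{s_0<\dots<s_m\}$ and choose representatives $z_i\in N$ of $\gamma(s_i)$, with $z_0=f(x)$ and $z_m=y$ exactly. Apply the inductive hypothesis successively, starting from $p_0=x$, to get $p_{i+1}\in M$ with $f(p_{i+1})=z_{i+1}$ and
\[
d^\alpha_{cf}(p_i,p_{i+1})\le C\rho^\alpha_{cf}(z_i,z_{i+1})\le C|s_{i+1}-s_i| .
\]
This gives a $C$-Lipschitz map $\psi_n\colon F_n\to M^\alpha_{cf}$, $s_i\mapsto[p_i]$, with $\psi_n(\min K)=[x]$ and $\psi_n(\max K)=[p^{(n)}]$, where $f(p^{(n)})=y$.

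By compactness of $M^\alpha_{cf}$ and of $M$ and a diagonal argument, we pass to a subsequence along which $\psi_n$ converges pointwise on $\bigcup_n F_n$ and $p^{(n)}\to p_\varepsilon$ in $M$. Then $f(p_\varepsilon)=y$ by continuity. The limit is $C$-Lipschitz on a dense subset of $K$, so it extends by completeness to a $C$-Lipschitz fragment $\psi\colon K\to M^\alpha_{cf}$ from $[x]$ to $[p_\varepsilon]$. Reparametrizing by $s\mapsto Cs$ makes $\psi$ $1$-Lipschitz, with gaps of total measure at most $C(t+\varepsilon)$. Hence $d^{\alpha+1}_{cf}(x,p_\varepsilon)\le C(t+\varepsilon)$, and the approximation principle (now at level $\alpha+1$) concludes.

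Finally, the induced map $f_\alpha$ is Lipschitz by Section \ref{Section:Prel_CF}. For co-Lipschitzness, take $[y]\in B(f_\alpha([x]),r)$ in $N^\alpha_{cf}$. Inequality \eqref{Eq:Lip_quot_CF} gives $p$ with $f_\alpha([p])=[y]$ and $d^\alpha_{cf}(x,p)\le Cr$, so $\coLip(f_\alpha)\le C$.
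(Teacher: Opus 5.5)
Your proof is correct and is essentially the paper's argument: you lift a fine discretization of a near-optimal curve fragment through the co-Lipschitz property, extract a $C$-Lipschitz limit fragment by a diagonal compactness argument, and remove the $\varepsilon$ via compactness of $f^{-1}(y)$. The only difference is bookkeeping: the paper does the lifting once at $\alpha=1$ and gets successors by applying that case to $f_\alpha$ (which tacitly needs one more use of the level-$\alpha$ inequality to upgrade $f_\alpha([p])=[y]$ to $f(p)=y$), whereas you lift directly at each successor level, taking limits in $M^\alpha_{cf}$ while tracking the endpoint in $M$, and you also spell out the compactness step that the paper's one-line limit case leaves implicit.
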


\begin{proof}
   We prove this Proposition using transfinite induction. First, assume $\alpha=1$. 
   
  Fix $x\in M$ and $y\in N$. For every $\varepsilon>0$ we find a compact $\Ke\subset \R$ and a $1$-Lipschitz function $\psi:\Ke\rightarrow N$ with $\psi(\min(\Ke))=f(x)$ and  $\psi(\max(\Ke))=y$ such that 
    \begin{equation} \lambda([\min \Ke,\max \Ke]\setminus \Ke)\leq\rho_{cf}(f(x),y)+\varepsilon.\end{equation} 
Let $D:=\{a_n\colon n\in \N\}$ be a dense subset of $\Ke$ with $a_1=\min \Ke$.

    Fix $n\in \N$. Let $z_1^n,\dots,z_n^n$ in $\R$ and let $\pi_n:\{1,\ldots,n\}\rightarrow\{1,\ldots,n\}$ be a permutation such that $z_1^n< z_{2}^n<\ldots<z_n^n$ and $z_{\pi_n(i)}^n=a_i$ for all $i\in \{1,\ldots n\}$. Note that $z_1^n=a_1$ since $a_1=\min \Ke$. We will inductively define a sequence $(x_i^n)_{i=1}^n$ in $M$ such that $f(x_i^n)=\psi(z_i^n)$ and such that $d(x_{i+1}^n,x_i^n)\leq C(z_{i+1}^n-z_i^n)$ for all $i\in\{1,\dots,n\}$. 
    
    Define $x_1^n=x$. Having defined $x_1^n,\ldots,x_i^n\in M$ for some $i<n$, we next define $x_{i+1}^n \in M$. Since $f$ is co-Lipschitz with constant $C$, 
    \begin{equation}
    \label{equation:f_CoLip_at_x_in}
        B(f(x_i^n),z_{i+1}^n-z_{i}^n)\subset f(B(x_i^n,C(z_{i+1}^n-z_{i}^n)).
    \end{equation}
    Additionally, using that $\psi$ is $1$-Lipschitz and that $\psi(z_i^n)=f(x_i^n)$, we obtain that 
    \begin{equation}
    \label{equation:psi_1_Lip_at_z_in}
    \psi(z^n_{i+1})\in B(f(x_i^n),z_{i+1}^n-z_i^n).
    \end{equation}
    Combining equations \eqref{equation:f_CoLip_at_x_in} and \eqref{equation:psi_1_Lip_at_z_in}, we can find $x_{i+1}^n\in B(x_i^n,C(z_{i+1}^n-z_{i}^n))$ such that $f(x_{i+1}^n)=\psi(z_{i+1}^n)$, thus finishing the induction. Note that for all different $i,j\in\{1,\dots n\}$ with $i<j$, we have

\begin{equation}
\label{equation:Choice_of_x_in_is_C_Lip}
    d(x_i^n,x_j^n)\leq \sum_{k=i}^{j-1} d(x_k,x_{k+1})\leq\sum_{k=i}^{j-1} C(z_{k+1}^n-z_{k}^n)=C|z_i^n-z_j^n|.
\end{equation}

Now, we aim to define a sequence $(x_n)_n$ such that $f(x_n)=\psi(a_n)$ for all $n\in\N$ and such that $d(x_n,x_m)\leq C|a_n-a_m|$ for all $n,m\in \N$. We do so inductively. First, define $x_1=x$. Note that $x_1=x^n_1$ for all $n\in\N$. Next, consider the sequence $(x^n_{\pi_n(2)})_n$ in $M$. Notice that $f(x^n_{\pi_n(2)})=\psi(z^n_{\pi_n(2)})=\psi(a_2)$ for all $n\in\N$. Using compactness of $M$, find a converging subsequence $(x_{\pi_{n_k}(2)}^{n_k})_k$ with a limit $x_2\in M$. By continuity of $f$ we still have that $f(x_2)=\psi(a_2)$, and, using equation \eqref{equation:Choice_of_x_in_is_C_Lip}, we get that 
\begin{equation}d(x^{n_k}_{\pi_{n_k}(1)},x^{n_k}_{\pi_{n_k}(2)})\leq C|z^{n_k}_{\pi_{n_k}(1)}-z^{n_k}_{\pi_{n_k}(2)}|=C|a_1-a_2|,\end{equation}
for all $k\in\N$, and so, passing to the limit, we get $d(x_1,x_2)\leq C|a_1-a_2|$. For the next step, we consider the sequence $(x^{n_k}_{\pi_{n_k}(3)})_k$ and obtain a further subsequence that converges to some $x_3\in M$ with $f(x_3)=\psi(a_3)$ and that, thanks to equation \eqref{equation:Choice_of_x_in_is_C_Lip} satisfies
\begin{equation}d(x_i,x_3)\leq C|a_i-a_3|,\end{equation}
for $i=1,2$. Continuing in this way we obtain the desired sequence $(x_n)_n$.

Now we can define a C-Lipschitz map $\phi:D\rightarrow M$ by $\phi(a_n)=x_n$ and extend it to the whole space $\Ke$. Denote $p_\varepsilon=\phi(\max(\Ke))$.  Note that 
\begin{equation}
\label{equation:f(p_eps)=y}
f(p_\varepsilon)=\psi(\max(\Ke))=y
\end{equation}
and therefore
\begin{equation}
\label{equation:d_ur(x,p_eps)_estimate}
     d_{cf}(x,p_\varepsilon)\leq C\lambda([\min \Ke,\max \Ke]\setminus K)\leq C \rho_{cf}(f(x),y)+C\varepsilon.
\end{equation}

   Take any sequence $(\varepsilon_n)_n$ converging to $0$, consider the corresponding sequence $(p_{\varepsilon_n})_n$ in $M$, and find a converging subsequence with a limit $p\in M$. Combining equations \eqref{equation:f(p_eps)=y} and \eqref{equation:d_ur(x,p_eps)_estimate} we get that $f(p)=y$ and that equality $(\ref{Eq:Lip_quot_CF})$ holds. This proves the case $\alpha=1$.

   For a successor ordinal $\alpha+1$, it is enough to apply the base case to the map $ f_{\alpha}$, which is a Lipschitz quotient with co-Lipschitz constant $C$ by inductive hypothesis. 

    If $\alpha$ is a limit ordinal, recall that for any $x,y\in M$, we have
    \begin{equation}d_{cf}^\alpha (x,y)=\inf_{\beta<\alpha} d_{cf}^\beta(x,y) \qquad \text{and}\qquad \rho_{cf}^\alpha(f(x),f(y))=\inf_{\beta<\alpha}\rho_{cf}^\beta(f(x),f(y)).\end{equation}
 Since equality (\ref{Eq:Lip_quot_CF}) holds for all $\beta<\alpha$, then it also holds  for the corresponding infimum aswell.
\end{proof}

As a corollary, we get Theorem \ref{Main_theorem_p1u}.

\begin{corollary}
    \label{Corollary:Quotient_of_p1u_is_p1u}
    Let $(M,d)$ be a compact purely $1$-unrectifiable metric space and let $(N,\rho)$ be a Lipschitz quotient of $(M,d)$. Then $(N,\rho)$ is also purely $1$-unrectifiable.
\end{corollary}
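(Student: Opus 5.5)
The plan is to apply Proposition \ref{Prop:technical_ineq_quotients} with $\alpha=1$ and then use the characterization that a space $X$ is purely $1$-unrectifiable if and only if $X_{cf}$ is isometric to $X$. Equivalently, $d_{cf}=d$ on $X$: the identity $X\to X_{cf}$ is always $1$-Lipschitz because $d_{cf}\le d$, and it is an isometry exactly in the purely $1$-unrectifiable case, as recalled in Section \ref{Section:Prel_CF}.

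Since $M$ is compact and purely $1$-unrectifiable, we have $d_{cf}=d$ on $M$. Let $f\colon M\to N$ be a Lipschitz quotient with co-Lipschitz constant $C$, and fix $u,v\in N$. By surjectivity, choose $x\in M$ with $f(x)=u$. Proposition \ref{Prop:technical_ineq_quotients} then gives $p\in M$ with $f(p)=v$ and
\begin{equation*}
d(x,p)=d_{cf}(x,p)\le C\rho_{cf}(u,v).
\end{equation*}
Combining this with the Lipschitz bound $\rho(u,v)=\rho(f(x),f(p))\le \Lip(f)\,d(x,p)$ yields
\begin{equation*}
\rho_{cf}(u,v)\le\rho(u,v)\le \Lip(f)\,C\,\rho_{cf}(u,v).
\end{equation*}
Hence the quotient map $N\to N_{cf}$ is injective and bi-Lipschitz, so $N_{cf}$ is bi-Lipschitz equivalent to $N$.

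To conclude, we invoke \cite[Corollary 2.4]{Flo+24}, quoted in the preliminaries: $X$ is purely $1$-unrectifiable as soon as $X_{cf}$ is bi-Lipschitz equivalent to $X$. Applied to $X=N$, this gives the result.

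If one prefers to avoid that corollary, there is a direct argument, and this is the only place requiring care. Suppose $\gamma\colon K\to N$ is a bi-Lipschitz curve fragment with $\lambda(K)>0$, and take a Lebesgue density point $t$ of $K$. For $s\in K$ near $t$, the gaps of $K\cap[t,s]$ have measure $o(|s-t|)$, so $\rho_{cf}(\gamma(t),\gamma(s))=o(|s-t|)$. On the other hand, bi-Lipschitz continuity gives $\rho(\gamma(t),\gamma(s))\gtrsim|s-t|$. This contradicts the inequality $\rho\le C\Lip(f)\rho_{cf}$ obtained above, so no such $\gamma$ exists.

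The compactness of $M$ is used only through Proposition \ref{Prop:technical_ineq_quotients}, so no step beyond the proposition itself is expected to be an obstacle.
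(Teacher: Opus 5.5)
Your proof is correct and follows the paper's own argument: apply Proposition \ref{Prop:technical_ineq_quotients} with $\alpha=1$, use $d_{cf}=d$ on $M$ to get $\rho_{cf}\le\rho\le \Lip(f)\,C\,\rho_{cf}$ on $N$, and conclude with \cite[Corollary 2.4]{Flo+24}. Your optional density-point argument that replaces this citation is also sound, provided you first restrict to a compact positive-measure subset $K$ and rescale the fragment to be $1$-Lipschitz. The rescaling only introduces a constant, $\rho_{cf}(\gamma(t),\gamma(s))\le \Lip(\gamma)\,\lambda([t,s]\setminus K)$, so the contradiction still goes through.
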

\begin{proof}
    Let $f\colon (M,d)\rightarrow (N,\rho)$ be a Lipschitz quotient with co-Lipschitz constant $C$. Since $(M,d)$ is purely $1$-unrectifiable, it is isometric to its curve-flat quotient and we have that $d_{cf}=d$. We will show that $\rho_{cf}$ is a metric on $N$ and that $(N,\rho)$ and $(N,\rho_{cf})$ are bi-Lipschitz equivalent. 
    
    Fix $y\neq z\in N$. Since $f$ is surjective, there exists $x\in M$ such that $f(x)=z$, and applying Proposition \ref{Prop:technical_ineq_quotients} with $\alpha=1$, we find $p\in M$ such that $f(p)=y$ and 
    \begin{equation}d(x,p)\leq C\rho_{cf}(z,y),\end{equation}
    where we used that $d_{cf}=d$ in $M$. Since $f(x)\neq f(p)$, we obtain that $x\neq p$, and thus we get that $\rho_{cf}$ is indeed a metric on $N$. Finally, notice that applying the previous inequality and the fact that $f$ is Lipschitz we get
    \begin{equation}\rho_{cf}(z,y)\leq \rho(z,y)\leq \text{Lip}(f)d(x,p)\leq \text{Lip}(f)C \rho_{cf}(z,y),\end{equation}
    which proves that $(N,\rho)$ and $(N,\rho_{cf})$ are bi-Lipschitz equivalent. However, this implies that $N$ is purely $1$-unrectifiable (see e.g: Corollary 2.4 in \cite{Flo+24}).
\end{proof}

We are now ready to prove Theorem \ref{Main_Theorem_index_through_quotients}.

\begin{theorem}
\label{Theorem:curve_flat_index_quotients}
    Let $(M,d)$ be a compact metric space and let $(N,\rho)$ be a Lipschitz quotient of $(M,d)$. Then $\alpha_{cf}(M)\geq\alpha_{cf}(N)$. 
\end{theorem}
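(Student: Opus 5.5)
Set $\alpha:=\alpha_{cf}(M)$ and let $f\colon M\rightarrow N$ be a Lipschitz quotient with co-Lipschitz constant $C$. The plan is to show that $N_{cf}^{\alpha}$ is purely $1$-unrectifiable. By the definition of the curve-flat index as the least ordinal at which the quotient becomes purely $1$-unrectifiable, this gives $\alpha_{cf}(N)\leq\alpha$.

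First, I would note that $\alpha<\omega_1$, since $M$ is compact and hence separable, so Proposition \ref{Prop:technical_ineq_quotients} applies at level $\alpha$. It gives that the induced map $f_\alpha\colon M_{cf}^\alpha\rightarrow N_{cf}^\alpha$ is a Lipschitz quotient with co-Lipschitz constant $C$; it is surjective because $f$ is, and Lipschitz with $\Lip(f_\alpha)=\Lip(f)$. Second, $M_{cf}^\alpha$ is compact, because it is the image of $M$ under the quotient map $q_M^\alpha$, which is $1$-Lipschitz since $d_{cf}^\alpha\leq d$. By the definition of $\alpha$, $M_{cf}^\alpha$ is also purely $1$-unrectifiable. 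Corollary \ref{Corollary:Quotient_of_p1u_is_p1u}, applied to $f_\alpha$, then shows that $N_{cf}^\alpha$ is purely $1$-unrectifiable, which is what we wanted.

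The main point needing care is the monotonicity implicit in the definition of $\alpha_{cf}$. I have to make sure that purely $1$-unrectifiable at level $\alpha$ really gives index at most $\alpha$, rather than only some weaker statement. This holds because $\alpha_{cf}(N)$ is defined as the smallest ordinal at which $N_{cf}^\beta$ is purely $1$-unrectifiable: if $N_{cf}^\alpha$ has this property, then $\alpha_{cf}(N)\leq\alpha$ directly, with no stabilization argument needed. It is also worth recording the slight abuse in Corollary \ref{Corollary:Quotient_of_p1u_is_p1u}, namely that the quotient there is only required to be a metric space, not necessarily complete or compact. This matches our situation, since $N_{cf}^\alpha$ is a metric space and, as a Lipschitz image of a compact space, is in fact compact.
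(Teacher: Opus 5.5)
Your proof is correct and follows the paper's argument exactly: apply Proposition \ref{Prop:technical_ineq_quotients} at level $\alpha=\alpha_{cf}(M)$ to see that $f_\alpha$ is a Lipschitz quotient, then invoke Corollary \ref{Corollary:Quotient_of_p1u_is_p1u} on the purely $1$-unrectifiable space $M^\alpha_{cf}$. Your extra checks that $\alpha<\omega_1$ and that $M^\alpha_{cf}$ is compact (as a $1$-Lipschitz image of $M$) are left implicit in the paper, but they are genuinely needed to meet the hypotheses of those two results.
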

\begin{proof}
    Let $f\colon (M,d)\rightarrow (N,\rho)$ be a Lipschitz quotient map and denote $\alpha=\alpha_{cf}(M)$. 
    By Proposition \ref{Prop:technical_ineq_quotients}, the induced map $f_\alpha\colon (M/d_{cf}^\alpha,d_{cf}^\alpha)\rightarrow (N/\rho_{cf}^\alpha,\rho_{cf}^\alpha)$ is a Lipschitz quotient. Since $(M/d_{cf}^\alpha,d_{cf}^\alpha)$ is purely $1$-unrectifiable, the result follows from Corollary \ref{Corollary:Quotient_of_p1u_is_p1u}.
\end{proof}

\medskip

Now, in order to prove Theorem \ref{Main_Theorem_2_nocompactSUT} we only need to show the existence of compact metric spaces with arbitrarily high countable curve-flat index. This follows from Theorem \ref{Main_Theorem_4_compacthighordercf}, whose technical proof we postpone to the following sections. Now, we assume its validity and we show a formally stronger version of Theorem \ref{Main_Theorem_2_nocompactSUT}.

\begin{theorem}
\label{Theorem:main_theorem_3_general}
    Given a compact purely $1$-unrectifiable metric space $N$, consider $\mathcal{F}_N$ the family of all compact metric spaces $K$ such that $K_{cf}^{\omega_1}$ is bi-Lipschitz equivalent to $N$. There does not exist a compact metric space $M$ universal for Lipschitz quotients onto $\mathcal{F}$.

    In particular, there does not exist a compact metric space universal for Lipschitz quotients onto all compact metric spaces.
\end{theorem}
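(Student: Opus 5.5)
The plan is to argue by contradiction, combining Theorem \ref{Theorem:curve_flat_index_quotients} with Theorem \ref{Main_Theorem_4_compacthighordercf}, whose validity is assumed. Suppose some compact metric space $M$ is universal for Lipschitz quotients onto $\mathcal{F}_N$. Since $M$ is compact, and hence separable, its curve-flat index $\alpha_{cf}(M)$ is a countable ordinal, by the cardinality bound from \cite{AGPP22} recalled in the introduction. Set $\beta:=\alpha_{cf}(M)+1<\omega_1$. The goal is to produce $K\in\mathcal{F}_N$ with $\alpha_{cf}(K)\geq\beta$. Once such a $K$ exists, universality gives a Lipschitz quotient $M\to K$, and Theorem \ref{Theorem:curve_flat_index_quotients} then yields $\alpha_{cf}(K)\leq\alpha_{cf}(M)<\beta$, which is a contradiction.

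To construct $K$, apply Theorem \ref{Main_Theorem_4_compacthighordercf} to the compact space $N$, the countable ordinal $\beta$ and, say, $\varepsilon=1/2$. This gives a compact $K$ with $\alpha_{cf}(K)\geq\beta$ such that $K^\beta_{cf}$ is $(1+\varepsilon)$-isometric, and in particular bi-Lipschitz equivalent, to $N$. It remains to check that $K\in\mathcal{F}_N$, that is, that $K^{\omega_1}_{cf}$ is bi-Lipschitz equivalent to $N$. Since $N$ is purely $1$-unrectifiable and pure $1$-unrectifiability is invariant under bi-Lipschitz maps, $K^\beta_{cf}$ is purely $1$-unrectifiable. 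Hence $d^{\beta+1}_{cf}=d^\beta_{cf}$, and by transfinite induction (the infimum at limit ordinals is constant from $\beta$ on) the sequence stabilizes: $K^\gamma_{cf}=K^\beta_{cf}$ for all $\gamma\geq\beta$. In particular $K^{\omega_1}_{cf}=K^\beta_{cf}$, which is bi-Lipschitz equivalent to $N$. The same reasoning shows $\alpha_{cf}(K)\leq\beta$; only the lower bound is needed here.

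For the ``in particular'' part, take $N$ to be a single point, which is compact and purely $1$-unrectifiable. Any space universal for Lipschitz quotients onto all compact metric spaces is in particular universal onto $\mathcal{F}_N$, which is impossible by the first part.

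The only delicate step is the stabilization and invariance claim: that a bi-Lipschitz copy of a purely $1$-unrectifiable space is purely $1$-unrectifiable, so that iterating the curve-flat quotient past $\beta$ changes nothing. Both facts follow directly from the definitions. Everything else is bookkeeping: $\alpha_{cf}(M)$ is countable because $M$ is compact, and Theorem \ref{Main_Theorem_4_compacthighordercf} does the actual work.
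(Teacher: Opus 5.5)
Your proof is correct and takes the same route as the paper. You assume a universal compact $M$, use that $\alpha_{cf}(M)$ is countable, invoke Theorem~\ref{Main_Theorem_4_compacthighordercf} to get $K\in\mathcal{F}_N$ with larger curve-flat index, and contradict Theorem~\ref{Theorem:curve_flat_index_quotients}; you also verify that $K\in\mathcal{F}_N$ (pure $1$-unrectifiability of $K^\beta_{cf}$ and stabilization up to $\omega_1$), which the paper leaves implicit.
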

\begin{proof}
    Suppose by contradiction that $M$ is a compact space such that for every compact space $K\in \mathcal{F}_N$ there exists a Lipschitz quotient from $M$ onto $K$. Since $M$ is compact, its curve-flat index $\alpha_{cf}(M)$ is a countable ordinal. By Theorem \ref{Main_Theorem_4_compacthighordercf}, we can find a compact metric space $Y\in \mathcal{F}_N$ such that $\alpha_{cf}(Y)>\alpha_{cf}(M)$. However, $Y$ is a Lipschitz quotient of $M$ so $\alpha_{cf}(Y)\leq\alpha_{cf}(M)$ by Theorem \ref{Theorem:curve_flat_index_quotients}; a contradiction. 
\end{proof}

Inspecting the previous proof, one may be tempted to claim that in order to prove Theorem \ref{Main_Theorem_2_nocompactSUT}, we only need to show that for every compact metric space $M$ there exists a compact $Y$ such that $Y/d_{cf}$ is bi-Lipschitz equivalent to $M$, and thus Theorem \ref{Main_Theorem_4_compacthighordercf} is not needed in its full generality for this purpose. However, note that for such $M$ and $Y$, it is not necessarily true that $\alpha_{cf}(Y)>\alpha_{cf}(M)$. Indeed, for such a pair it only follows that $\alpha_{cf}(Y)=1+\alpha_{cf}(M)$, which coincides with $\alpha_{cf}(M)$ if the latter is infinite. In fact, we need Theorem \ref{Main_Theorem_4_compacthighordercf} to work for any arbitrarily countable ordinal $\alpha$, since \emph{additively indecomposable ordinals} (that is, ordinals $\gamma$ such that $\beta+\gamma=\gamma$ for all $\beta<\gamma$) are precisely those of the form $\omega^\beta$ for any ordinal $\beta$ (see e.g: \cite[Exercise 2.13]{JechBook}).

\section{Finite order curve-flat quotients}\label{Section:Finite_order}

This section is a preamble to Section \ref{Section:High_order}, and its goal is to prove Theorem \ref{Main_Theorem_3_finiteorder_CF}. Here, we introduce the \emph{gapped graph} construction that lies at the core of the proof of both Theorems \ref{Main_Theorem_4_compacthighordercf} and \ref{Main_Theorem_3_finiteorder_CF}.

We start by formalizing the notion of the \emph{geodesic graph generated by a metric space}.

\begin{definition}[Geodesic graph]
    Let $(M,d)$ be a metric space. Let $X$ be a Banach space such that $M$ is a linearly independent subset of $X$ (we may take, for example, $X$ to be the Lipschitz-free space of $M$). The \textit{geodesic graph generated by $M$} is the metric space $(G(M),d_G)$, with 
    \begin{equation}G(M):=\bigcup_{x,y\in M}[x,y]\subset X,\end{equation}
    where $[x,y]:=\{\lambda y+(1-\lambda)x\colon \lambda\in [0,1]\}\subset X$; and $d_G$ is the intrinsic metric associated to $G(M)$ as a subset of the Banach space $X$.
\end{definition}    

Intuitively, the space $(G(M),d_G)$ is the shortest path metric on the complete weighted graph (including the edges in the set $G(M)$) whose set of vertices is $M$ and every pair of points $x\neq y\in M$ is connected by an edge of weight $d(x,y)$. Note that if $M$ is complete, its geodesic graph is also complete.

The metric spaces we are interested in are closed subsets of the geodesic graph generated by a distortion of $M$. We will mainly use \emph{local distortions}, but the following definition makes sense for a bigger class of distortion functions.

\begin{definition}[Gapped graph]
    Let $(M,d)$ be a metric space, let $\omega$ be a distortion function such that $\omega(t)\geq t$ for all $t\in [0,+\infty)$, and let $D\subset M$. The \emph{gapped graph associated to $(M,d)$, $\omega$ and $D$} is the subspace of $(G(M),(\omega\circ d)_G)$ given by 
    \begin{equation}G(M,\omega,D):=M\cup \bigcup_{x,y\in D}[x,u]\cup[v,y],\end{equation}
    where $u,v$ are the unique points in the edge $[x,y]$ such that $(\omega\circ d)_G(u,v)=d(x,y)$ and $(\omega\circ d)_G(x,u)=(\omega\circ d)_G(v,y)$.
\end{definition}

Intuitively, the gapped graph associated to $(M,d)$, $\omega$ and $D$ is the subset of the geodesic graph generated by $(M,\omega\circ d)$ consisting of every vertex, and those edges $[x,y]$ with $x,y\in D$, removing a gap in the middle of $[x,y]$ of length exactly $d(x,y)$. The exact position of the gap of length $d(x,y)$ in the segment $[x,y]$ is not relevant for our purposes.

It is direct to check that $G(M,\omega,D)$ is a closed subset of $(G(M),(\omega\circ d)_G)$, and hence it is a complete metric space whenever $M$ is complete. 

A gapped graph $G(M,\omega,D)$ can be decomposed as a disjoint union of rectifiably connected (indeed geodesic) components, each containing a point of $M$. Given $x\in M$, we call $\text{Arm}(x)$ \emph{the arm of $x$ in $G(M,\omega,D)$} as the unique rectifiably connected component of $G(M,\omega,D)$ containing the point $x$. Note that, if $x\neq y\in D$, it holds that $(\omega\circ d)_G(\text{Arm}(x),\text{Arm}(y))=d(x,y)$. Figure \ref{fig:Gapped_graph_of_3_points} shows an example of the gapped graph of a finite metric space.

\begin{figure}
    \centering
    \includegraphics[width=0.7\linewidth]{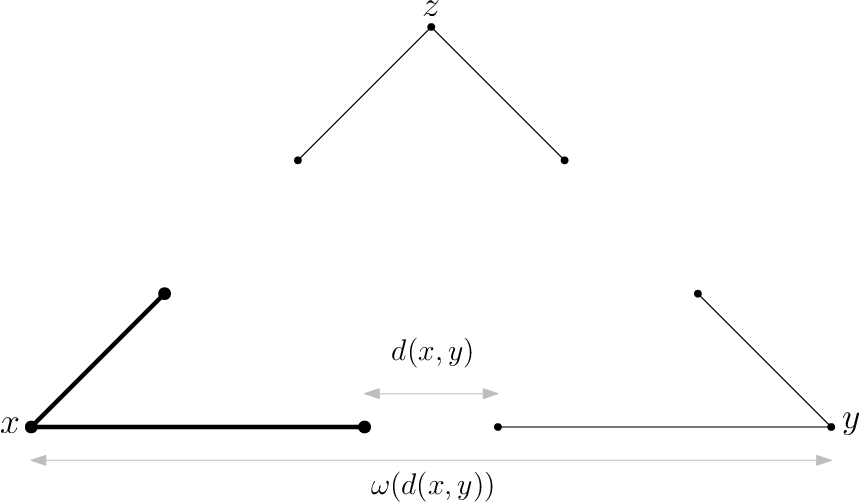}
    \caption{The gapped graph $G(M,\omega,D)$ of a three point metric space $(M,d)$, with $D=M$. The set in bold represents the arm of $x$.}
    \label{fig:Gapped_graph_of_3_points}
\end{figure}

The following proposition characterizes curve-flat functions in gapped graphs.

\begin{proposition}
    
\label{Proposition:Main_Theorem_A_general}
    Let $(M,d)$ be a complete metric space, let $\omega$ be a distortion function with $\omega(t)\geq t$ for all $t\in[0,+\infty)$ and let $D\subset M$. Let $f\colon (G(M,\omega,D),(\omega\circ d)_G)\rightarrow \R$ be a Lipschitz function. The following assertions are equivalent:

    \begin{enumerate}[label=(\alph*)]
        \item The function $f$ is curve-flat.\label{MainPropositionAGen_a}
        \item The restriction $f_{|(M,\omega\circ d)}$ is curve-flat and $f$ is constant on every arm.\label{MainPropositionAGen_b}
    \end{enumerate}

\end{proposition}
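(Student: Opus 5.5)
The plan is to prove the two implications separately. Throughout I write $\rho:=(\omega\circ d)_G$ for the metric on $G:=G(M,\omega,D)$.

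\emph{(a)$\Rightarrow$(b).} A curve-flat function is constant along every Lipschitz curve, as recalled in the preliminaries. Each arm is rectifiably connected, since it is a union of segments meeting at $x$. So $f$ is constant on every arm.

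The restriction $f_{|M}$ is curve-flat for a general reason. First, the inclusion $(M,\omega\circ d)\hookrightarrow (G,\rho)$ is $1$-Lipschitz, since $\rho\le\omega\circ d$ on $M$. Hence every curve-fragment in $(M,\omega\circ d)$ is also a curve-fragment in $G$. Therefore $\mathcal H^1(f\circ\gamma(K))=0$ for every such fragment $\gamma$.

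\emph{(b)$\Rightarrow$(a).} Let $\gamma\colon K\to G$ be a Lipschitz curve-fragment; we must show $\lambda(f\circ\gamma(K))=0$. Decompose $K=K_M\cup K_E$, where $K_M:=\gamma^{-1}(M)$ and $K_E:=\gamma^{-1}(G\setminus M)$.

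\emph{The set $K_E$.} Each point of $G\setminus M$ lies in the relative interior of a half-segment $[x,u)$ or $(v,y]$ of some arm, with $x,y\in D$. On each arm $f$ is constant, so $f(\gamma(K_E))$ is contained in the set of values $\{f(x)\}$ taken on arms hit by $\gamma$.
\begin{itemize}
\item If $\gamma(K_E)$ meets countably many arms, then $f(\gamma(K_E))$ is countable and hence null. Since $D$ may be uncountable, this needs justification. Write $K_E$ as a union of relatively open pieces of $K$ mapped into single open edge pieces; these are open in $K$ because the open edges are open in $G$. An open subset of $K$ is a countable union of relatively open intervals-of-$K$. Better: $K_E$ is open in $K$, hence a countable union of sets of the form $K\cap I$ with $I$ an open interval, and we may choose each $I$ so that $\gamma(K\cap I)$ lies in one open edge piece. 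This gives countably many arms.
\item Even without this cleaner choice, it suffices to use that $f$ is constant on each arm, together with the Lebesgue density argument below.
\end{itemize}

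\emph{The set $K_M$.} Here I intend to use criterion (iii) of the equivalence proposition for curve-flat functions, namely a.e.\ vanishing of the metric derivative of $f\circ\gamma$. The restriction $\gamma_{|K_M}\colon K_M\to G$ takes values in $M$, but it is only Lipschitz for $\rho$, not necessarily for $\omega\circ d$. This is the main obstacle.

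The key claim is that on $M$ the metric $\rho$ is comparable to $\omega\circ d$ locally, in the sense needed. A $\rho$-geodesic between $p,q\in M$ inside $G(M)$ passes through vertices. Travelling from $p$ to $q$ via an intermediate vertex $z$ costs $\omega(d(p,z))+\omega(d(z,q))\ge\omega(d(p,q))$, by subadditivity of the concave function $\omega$ together with the triangle inequality for $d$. Hence $\rho=\omega\circ d$ on $M$, and the restriction of $\gamma$ to $K_M$ is a Lipschitz curve-fragment into $(M,\omega\circ d)$ with the same constant. (I would double-check that the $G(M)$-geodesic really reduces to edge paths; this is the standard fact that $\omega\circ d$ is a metric.) By (b), $\lambda(f(\gamma(K_M)))=0$.

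\emph{Conclusion.} Combining the two parts, $\lambda(f\circ\gamma(K))\le\lambda(f\gamma(K_M))+\lambda(f\gamma(K_E))=0$. By criterion (ii) of the equivalence proposition, applied with $g$ the identity on $\R$, $f$ is curve-flat.
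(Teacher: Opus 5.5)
Your proof is correct and follows the same route as the paper's. You split $K$ into $\gamma^{-1}(M)$ and its complement, use openness of the edge pieces in $G(M,\omega,D)$ and separability of $K$ to see that only countably many arms are met off $M$ (so that part of the image is countable), and handle $\gamma^{-1}(M)$ by curve-flatness of $f_{|(M,\omega\circ d)}$; your explicit check that $(\omega\circ d)_G=\omega\circ d$ on $M$ (with $\gamma^{-1}(M)$ compact) makes precise a point the paper uses tacitly, while the vague second bullet about a ``Lebesgue density argument'' is unnecessary and can be dropped.
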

\begin{proof}
    The implication \ref{MainPropositionAGen_a} $\Rightarrow$ \ref{MainPropositionAGen_b} is straightforward. Indeed, the restriction of a curve-flat function is curve-flat, and thus it follows directly that $f_{|(M,\omega\circ d)}$ is curve-flat. Similarly, for every $x\in M$, the function $f_{|\text{Arm}(x)}$ is curve-flat, and since $\text{Arm}(x)$ is geodesic, this function must be constant.

    To show \ref{MainPropositionAGen_b} $\Rightarrow$ \ref{MainPropositionAGen_a}, we prove that $\lambda((f\circ\varphi)(K))=0$ whenever $K\subset \R$ is a compact subset and $\varphi\colon K\rightarrow G(M,\omega,D)$ is a Lipschitz map. Fix such $K$ and $\varphi$, and denote $K_M:=\varphi^{-1}(M)$.
    
    Since $K\setminus K_M$ is separable and $\varphi^{-1}(\text{Arm}(x)\setminus M)$ is open for all $x\in M$, there exists a countable set $(x_n)_n\subset M$ such that 
    \begin{equation}K=K_M\cup\bigcup_{n\in \N}K_{n}\end{equation}
    where $K_{n}:=\varphi^{-1}(\text{Arm}(x_n)\setminus M).$
    Since $f_{|(M,\omega\circ d)}$ is curve-flat, it follows that $\lambda((f\circ \varphi)(K_M))=0$, and since $f$ is constant on every arm, we also obtain that $\lambda((f\circ\varphi)(K_{n}))=0$ for all $n\in\N$. In conclusion,
    \begin{equation}\lambda((f\circ\varphi)(K))= \lambda((f\circ \varphi)(K_M))+\sum_{n\in\N}\lambda((f\circ\varphi)(K_{n}))=0 .\qedhere\end{equation}
\end{proof}

Now we can prove Theorem \ref{Main_Theorem_3_finiteorder_CF}, as a particular case of the following result.

\begin{theorem}
    \label{Theorem:MainTheoremA_general}
    Let $(M,d)$ be a complete metric space, let $\omega$ be a local distortion, and let $D\subset M$ be a dense subset. Then the curve-flat quotient of the gapped graph $(G(M,\omega,D),(\omega\circ d)_G)$ is isometric to $(M,d)$.
\end{theorem}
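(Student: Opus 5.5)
We prove the statement by showing that for all $x,y$ in the gapped graph $G:=G(M,\omega,D)$, writing $\rho:=(\omega\circ d)_G$, the curve-flat pseudometric satisfies $\rho_{cf}(p,q)=d(\pi(p),\pi(q))$. Here $\pi\colon G\to M$ sends each point of $\text{Arm}(x)$ to $x$. Once this is established, the quotient $G/\rho_{cf}$ is identified with $M$ via $\pi$, and $\pi$ is surjective since $M\subset G$. Throughout we use the functional description \eqref{equation:cf_pseudometric_def} of $d_{cf}$ and the characterization of curve-flat functions in Proposition \ref{Proposition:Main_Theorem_A_general}.

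\emph{Upper bound.} Every curve-flat $1$-Lipschitz $f\colon G\to\R$ is constant on arms by Proposition \ref{Proposition:Main_Theorem_A_general}. Hence $|f(p)-f(q)|=|f(x)-f(y)|$ with $x=\pi(p)$, $y=\pi(q)$. If $x,y\in D$ with $x\neq y$, then $\rho(\text{Arm}(x),\text{Arm}(y))=d(x,y)$, and the distance between the two arms is realized (or approximated) by points of the arms. Since $f$ is $1$-Lipschitz and constant on each arm, $|f(x)-f(y)|\le d(x,y)$. For general $x,y\in M$, approximate them by points of the dense set $D$; this requires continuity of $f$ on $M$ with respect to $d$, not only with respect to $\omega\circ d$. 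To get it, note that $|f(x)-f(y)|\le d(x,y)$ for all $x,y\in D$, so $f|_D$ is $1$-Lipschitz for $d$. Since $\omega$ is continuous with $\omega(0)=0$, $d$-convergence implies $\omega\circ d$-convergence, so $f$ on $M$ is the continuous extension of $f|_D$ and is $d$-$1$-Lipschitz. Taking the supremum over $f$ gives $\rho_{cf}(p,q)\le d(x,y)$.

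\emph{Lower bound.} Given $x_0,y_0\in M$, we need a $1$-Lipschitz curve-flat $f$ on $G$ with $|f(x_0)-f(y_0)|=d(x_0,y_0)$. We take $g(z)=d(z,x_0)$ on $M$ and set $f:=g\circ\pi$. By Proposition \ref{Proposition:Main_Theorem_A_general}, $f$ is curve-flat: it is constant on arms, and $g$ on $(M,\omega\circ d)$ is curve-flat because $(M,\omega\circ d)$ is purely $1$-unrectifiable ($\omega$ being local), so every Lipschitz map from it is curve-flat.

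It remains to check that $f$ is $1$-Lipschitz for $\rho$, and this is the main obstacle. Since $\rho$ is an intrinsic (shortest-path) metric, it suffices to bound $|f(p)-f(q)|$ along curves in the geodesic graph; more concretely, $\rho(p,q)$ is an infimum over chains of edge pieces. The plan is to show $\rho(p,q)\ge d(\pi(p),\pi(q))$ for all $p,q\in G$ by decomposing a near-optimal path in $G(M)$ between $p$ and $q$ according to the vertices it passes through. A path leaving $\text{Arm}(x)$ must cross edges. Each traversed full edge $[a,b]$ costs $\omega(d(a,b))\ge d(a,b)$, since $\omega(t)\ge t$. Each jump across a gap on an edge $[a,b]$ with $a,b\in D$ costs at least the gap length $d(a,b)$, and the arm assignment changes from $a$ to $b$ exactly there. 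Edges with an endpoint outside $D$ are absent from $G$, but the path may still pass through them in $G(M)$, again at cost $\omega(d)\ge d$. Summing along the path and applying the triangle inequality for $d$ gives $\rho(p,q)\ge d(\pi(p),\pi(q))$. The bookkeeping of where the path sits relative to gaps, and which arm each point belongs to, is the delicate part. Then $|f(p)-f(q)|\le d(\pi(p),\pi(q))\le\rho(p,q)$, completing the proof.
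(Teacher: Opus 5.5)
Your proposal is correct and follows essentially the paper's proof. The lower bound uses the same test function $s\mapsto d(x_0,\pi(s))$, made curve-flat via Proposition \ref{Proposition:Main_Theorem_A_general} and pure $1$-unrectifiability of $(M,\omega\circ d)$. The upper bound uses, as the paper does, constancy of curve-flat functions on arms together with $(\omega\circ d)_G(\text{Arm}(x),\text{Arm}(y))\le d(x,y)$ for $x,y\in D$. Your function-by-function density argument is equivalent to the paper's approximation of $\rho_{cf}$ through points of $D$.

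The paper simply asserts that the test function is $1$-Lipschitz, so your chain-of-edges verification is extra detail rather than a different route. To make it airtight, note that on each time interval during which it avoids $M$, a rectifiable curve in $G(M)$ stays inside a single edge; this uses that $\omega$ is local. Hence any curve from $p$ to $q$ can be shortcut to at most three edge pieces, and your bookkeeping becomes a finite check.
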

\begin{proof}
    We prove that for every $x,y\in M$,
    \begin{equation}
    \label{eq:main_eq_of_mt1}
        ((\omega\circ d)_G)_{cf}(p,q)=d(x,y)
    \end{equation}
    for any $p,q\in G(M,\omega,D)$ such that $p\in \text{Arm}(x)$ and $q\in \text{Arm}(y)$. Since $D$ is dense in $(M,d)$ and $\omega$ is a local distortion, it is also dense in $(M,\omega\circ d)$. Therefore, it is enough to prove that equation \eqref{eq:main_eq_of_mt1} holds for $x,y\in D$. 

    Fix $x,y\in D$, $p\in \text{Arm}(x)$ and $q\in \text{Arm}(y)$. Let $f_x\colon G(M,\omega,d)\rightarrow \R$ be the $1$-Lipschitz function given by $f_x(s)=d(x,z)$, where $z\in M$ is the unique point such that $s\in \text{Arm}(z)$. The function $f$ is clearly constant on every arm of $G(M,\omega,d)$, and $f_{|(M,\omega\circ d)}$ is curve-flat since $\omega$ is a local distortion and thus $(M,\omega\circ d)$ is purely $1$-unrectifiable. Therefore, by Proposition \ref{Proposition:Main_Theorem_A_general} $f_x$ is curve-flat and hence 
    \begin{equation}
        ((\omega\circ d)_G)_{cf}(p,q)\geq |f_x(p)-f_x(q)|=d(x,y),
    \end{equation}
    using equation \eqref{equation:cf_pseudometric_def}. 
    
    On the other hand, again by Proposition \ref{Proposition:Main_Theorem_A_general}, every curve-flat function $f\in \Lip(G(M,\omega,d))$ is constant on every arm. It follows that
    \begin{equation}((\omega\circ d)_G)_{cf}(p,q)\leq (\omega\circ d)_G(\text{Arm}(x),\text{Arm}(y))=d(x,y),
    \end{equation}
    and equation \eqref{eq:main_eq_of_mt1} holds. Now, we get that if $p,q\in G(M,\omega,D)$ their curve-flat distance is $0$ if and only if they belong to the same arm. Therefore, the curve-flat quotient of $(G(M,\omega,D),(\omega\circ d)_G)$ is bijective to $M$, and by \eqref{eq:main_eq_of_mt1} the curve-flat distance in this quotient is isometric to the distance $d$ in $M$. 
    
\end{proof}

As we mentioned before, Theorem \ref{Main_Theorem_3_finiteorder_CF} follows from Theorem \ref{Theorem:MainTheoremA_general} by taking a dense set $D$ with minimal cardinality and a local distortion satisfying $\omega(t)>t$ for all $t\in(0,+\infty)$, in the case of non-trivial metric spaces. For the one point metric space, the result is obvious since every rectifiably connected metric space has a one point curve-flat quotient.

\section{The compact case: High order curve-flat quotients}\label{Section:High_order}

In this section we prove Theorem \ref{Main_Theorem_4_compacthighordercf}. Given a triplet $(M,\alpha,\varepsilon)$, where $M$ is a compact metric space, $\alpha$ is a countable ordinal, and $\varepsilon>0$, our goal is to construct another compact metric space whose curve-flat quotient of order $\alpha$ is $(1+\varepsilon)$-isometric to $M$.

We construct such spaces by transfinite induction. Note that by directly applying the results in the previous section we can easily get the result for finite ordinals, but the resulting metric spaces are not compact unless $M$ is finite.  In addition, the (first) limit ordinal step cannot be easily deduced. 

In order to obtain a construction that works for all countable ordinals, we go back to the previous section with a slightly different perspective. Previously, we constructed the gapped graph $G(M,\omega,D)$ as a subset of a complete geodesic graph generated by a local distortion $(M,\omega\circ d)$. However, we can also obtain it by sequentially gluing in segments $S_{x,y}$ of length $\omega(d(x,y))$ with a gap of length $d(x,y)$ to countably many pairs of points $(x,y)\in (M,\omega\circ d)\times (M,\omega\circ d)$. Such \emph{gapped segments} $S_{x,y}$ clearly satisfy that their first-order curve-flat quotient is isometric to the pair $(\{x,y\},d)$, which is ultimately the condition that forces the first-order curve-flat quotient of $G(M,\omega,D)$ to be isometric to $(M,d)$. Hence, a natural way of adapting this construction to a given countable ordinal $\alpha$ is to, instead, glue in compact metric spaces $S_{x,y}^\beta$ for $\beta<\alpha$ (inductively constructed) such that $S_{x,y}^\beta/\rho_{cf}^\beta$ is isometric to $(\{x,y\},d)$. Since we are gluing in spaces at infinitely many pairs, we can obtain any countable ordinal index by using a suitable sequence of ordinals $(\beta_n)_n<\alpha$.

\subsection{Pseudometrics, attachments, and bendings}

The resulting construction is fairly technical. In order to simplify it, we will work with pseudometrics, since they allow us to avoid taking quotients until the end of the process, letting us work with fewer spaces overall. For this reason, we start by extending some of the definitions in the preliminaries to the pseudometric setting. 

Let $(M,\rho)$ be a pseudometric space. Then, the quotient $M/\rho$ equipped with $\rho$ is a metric space.
Moreover, if $f\colon (M,\rho)\rightarrow (N,d)$ is a Lipschitz function into a metric space $(N,d)$, it holds that $f(p)=f(q)$ for all $p,q\in M$ such that $\rho(p,q)=0$. Hence, the induced function $\overline{f}\colon (M/\rho,\rho)\rightarrow (N,d)$ given by $\overline{f}([p])=f(p)$ for all $p\in M$ is well defined and has the same Lipschitz constant as $f$.  

With this, we can suitably generalize the definition of curve-flat functions to pseudometric spaces in the following way: We say that a Lipschitz function $f\colon (M,\rho)\rightarrow \R$ is curve-flat if the induced function $\overline{f}\colon (M/\rho,\rho)\rightarrow \R$ is curve-flat. This definition is equivalent to the more direct generalization of curve-flatness to pseudometric spaces: a Lipschitz function $f\colon (M,\rho)\rightarrow \R$ is curve-flat if and only if for every Lipschitz curve fragment $\gamma\colon K\rightarrow (M,\rho)$, it holds that $\lambda((f\circ\gamma)(K))=0$. 
Analogously to equation \eqref{equation:cf_pseudometric_def}, we can define a curve-flat pseudometric $\rho_{cf}^\alpha\colon M\times M\rightarrow[0,+\infty)$ for every ordinal $\alpha$, without the need to define the intermediate quotient spaces. Moreover, $\rho_{cf}^\alpha$ defined in this way coincides with the original curve-flat pseudometric of order $\alpha$ whenever $\rho$ is a metric.

At the end of our construction, we will finally take the quotient $M/\rho_{cf}^\alpha$ and endow it with $\rho_{cf}^\alpha$, which is a metric on $M/\rho_{cf}^\alpha$, and we will show that this quotient is (almost) isometric to a certain compact space. For this reason, it is also useful for us to codify when a metric space $(M,d)$ is (almost) isometric to a quotient $(N/\rho,\rho)$ only considering the pseudometric space $(N,\rho)$.

We say that a subspace $S$ of a pseudometric space $(N,\rho)$ is \emph{total} if for every $x\in N$ there exists $y\in S$ such that $\rho(x,y)=0$. With this, it is easy to observe that an injective Lipschitz map $\varphi\colon (M,d)\rightarrow (N,\rho)$ between a metric space $(M,d)$ and a pseudometric space $(N,\rho)$ induces a $(1+\varepsilon)$-isometry $\overline{\varphi}\colon (M,d)\rightarrow (N/\rho,\rho)$ for $\varepsilon>0$ if and only if $\varphi(M)$ is total in $(N,\rho)$ and 
\begin{equation}(1-\varepsilon)d(x,y)\leq \rho(\varphi(x),\varphi(y))\leq(1+\varepsilon)d(x,y)\end{equation}
for all $x,y\in M$.

We will often consider different instances of a single function by endowing the domain with different pseudometrics. Therefore, we will write that $f\colon (M,\rho)\rightarrow \R$ has property P (such as Lipschitz, curve-flat, etc.) to mean that property P holds if $M$ is endowed with the pseudometric $\rho$. However, property P might fail when endowing $M$ with a different pseudometric.

We will need a basic result regarding curve-flat functions, which can be shown with an argument essentially contained in \cite[Proposition 5.22]{AGPP22}. We include its proof for pseudometric spaces.

\begin{lemma}
\label{Lemma:CF_pseudometric_is_CF}
    Let $(M,\rho)$ be a pseudometric space and let $\alpha$ be an ordinal. If a function $f\colon (M,\rho^\alpha_{cf})\rightarrow \R$ is Lipschitz, then $f\colon (M,\rho_{cf}^\beta)\rightarrow \R$ is curve-flat for all $\beta<\alpha$.
    
    In particular, the map $\rho_{cf}^\alpha(p,\cdot)\colon(M,\rho_{cf}^\beta)\rightarrow \R$  is curve-flat for every $\beta<\alpha$ and for every $p\in M$.
\end{lemma}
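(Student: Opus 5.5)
The plan is to reduce everything to the case $\alpha=\beta+1$. The key fact is monotonicity: $\rho_{cf}^\gamma\leq\rho_{cf}^\beta$ whenever $\beta\leq\gamma$.

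\emph{Monotonicity.} For a successor step, $\rho_{cf}^{\beta+1}=(\rho_{cf}^\beta)_{cf}\leq\rho_{cf}^\beta$. This follows because every $1$-Lipschitz curve-flat function on $(M,\rho_{cf}^\beta)$ is in particular $1$-Lipschitz, so the supremum in \eqref{equation:cf_pseudometric_def} is at most $\rho_{cf}^\beta$. At limit ordinals $\rho_{cf}^\gamma$ is defined as an infimum, so transfinite induction gives monotonicity in general. Consequently, if $f\colon(M,\rho_{cf}^\alpha)\to\R$ is $L$-Lipschitz and $\beta<\alpha$, then $\beta+1\leq\alpha$, and so $f$ is also $L$-Lipschitz for $\rho_{cf}^{\beta+1}$. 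It therefore suffices to show the following: if $f\colon(M,\sigma_{cf})\to\R$ is Lipschitz, where $\sigma=\rho_{cf}^\beta$, then $f\colon(M,\sigma)\to\R$ is curve-flat. This is the argument of \cite[Proposition 5.22]{AGPP22}.

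\emph{Main step.} Take a Lipschitz curve fragment $\gamma\colon K\to(M,\sigma)$; we must show $\lambda((f\circ\gamma)(K))=0$. Rescaling, we may assume $\gamma$ is $1$-Lipschitz. By the definition of the curve-flat pseudometric through curve fragments (or directly from its definition as an infimum, applied in the quotient $M/\sigma$), for $s<t$ in $K$ the restriction of $\gamma$ to $K\cap[s,t]$ is a $1$-Lipschitz fragment joining $\gamma(s)$ to $\gamma(t)$. Hence
\begin{equation*}
\sigma_{cf}(\gamma(s),\gamma(t))\leq\lambda([s,t]\setminus K).
\end{equation*}
Thus $|f\circ\gamma(s)-f\circ\gamma(t)|\leq L\,\lambda([s,t]\setminus K)$. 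By the Lebesgue density theorem, almost every $s\in K$ is a density point of $K$, and at such points $\lambda([s,t]\setminus K)=o(|t-s|)$. So the metric derivative of $f\circ\gamma$ vanishes $\lambda$-a.e. on $K$. Extending $f\circ\gamma$ to a Lipschitz function $g$ on $[\min K,\max K]$, we get $g'=0$ a.e. on $K$. The area formula (or the estimate $\lambda(g(A))\leq\int_A|g'|$) then gives $\lambda(g(K))=0$, which is what we need. The only subtlety is making sure we use the fragment characterization of $\sigma_{cf}$ in the pseudometric setting. This is legitimate, since curve fragments in $(M,\sigma)$ correspond to fragments in $M/\sigma$ and $\sigma_{cf}$ is defined on the quotient. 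I expect this identification to be the main point needing care, though it is routine.

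\emph{The "in particular" part.} The function $\rho_{cf}^\alpha(p,\cdot)$ is $1$-Lipschitz on $(M,\rho_{cf}^\alpha)$ by the triangle inequality, so the first part applies directly.
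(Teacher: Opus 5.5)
Your proposal is correct and follows essentially the same argument as the paper: restrict $\gamma$ to $K\cap[t_0,t]$ to get $|f(\gamma(t))-f(\gamma(t_0))|\leq \rho_{cf}(\gamma(t),\gamma(t_0))\leq\lambda([t_0,t]\setminus K)$, then apply the Lebesgue density theorem. Your monotonicity reduction to the case $\alpha=\beta+1$ makes explicit the ``transfinite induction'' that the paper leaves to the reader, and your final step via $\lambda(g(K))\leq\int_K|g'|$ replaces the paper's appeal to the derivative characterization (iii) of curve-flatness; both changes are harmless.
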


\begin{proof}
    We will prove it for $\alpha=1$, as the general case follows easily by transfinite induction. Let $f\colon (M,\rho_{cf})\rightarrow \R$ be a $1$-Lipschitz function. We will show that, for every compact subset $K\subset \R$ and every $1$-Lipschitz map $\gamma\colon K\rightarrow M$ 
    \begin{equation}\lim_{\substack{t\rightarrow t_0\\ t\in K}}\frac{|f(\gamma(t))-f(\gamma(t_0))|}{|t-t_0|}=0\end{equation}
    for almost every $t_0\in K$. Fix $t_0\in K$ and $t\neq t_0$. Since $f$ is $1$-Lipschitz with respect to the pseudometric $\rho_{cf}$, applying the definition of $\rho_{cf}$ for the curve-fragment $\gamma_{[t,t_0]}$, we get that
    \begin{align}
        |f(\gamma(t))-f(\gamma(t_0))|&\leq \rho_{cf}(\gamma(t),\gamma(t_0))\leq \lambda([t_0,t]\setminus K),
    \end{align}
    where $[t_0,t]$ denotes also the interval $[t,t_0]$ if $t<t_0$. Now, if $t_0$ is a density point in $K$ for the Lebesgue measure, then
    \begin{equation}\lim_{t\rightarrow t_0}\frac{|f(\gamma(t))-f(\gamma(t_0))|}{|t-t_0|}\leq\lim_{t\rightarrow t_0} \frac{\lambda([t_0,t]\setminus K)}{|t-t_0|}=0\end{equation}
    By Lebesgue density theorem, the set of density points in $K$ has full measure, and thus the result follows. 
\end{proof}

Moving on, we need three main technical definitions. The first is a formalization of the gluing process. This is a standard technique which can be implemented in many different ways. In this case, we choose a formulation close to the one in \cite{HQ2023}, but adapted to the pseudometric approach.

\begin{definition}[Attachment]
Let $(M,\rho_M)$ be a complete pseudometric space, called the \emph{frame}. Let $\mathcal{S}=\{(S_\gamma,\rho_\gamma)\}_{\gamma\in \Gamma}$ be a collection of complete pseudometric spaces, called the \emph{threads} such that:
\begin{itemize}
    \item $S_{\gamma}\cap S_\eta\subset M$ for all $\gamma\neq\eta$.
    \item For every $\gamma\in \Gamma$, the pseudometrics $\rho_X$ and $\rho_\gamma$ coincide in the set $Z_\gamma:=M\cap S_\gamma$, which is called the \emph{anchor of $S_\gamma$}.
\end{itemize}
The \emph{attachment of the frame $M$ with the threads $\mathcal{S}$} is the pseudometric space $(M(\mathcal{S}),\rho_\mathcal{S})$, where 
\begin{equation}M(\mathcal{S})=M\cup\bigg(\bigcup_{\gamma\in\Gamma} S_\gamma\bigg), \end{equation}
and 
\begin{align}
   \rho_\mathcal{S}\colon &  M(\mathcal{S})\times M(\mathcal{S})\longrightarrow \mathbb{R}^+
\end{align} 
is the largest pseudometric on $M(\mathcal{S})$ that agrees with $\rho_M$ on $M$ and agrees with $\rho_\gamma$ on each $S_\gamma$. Concretely:
\begin{equation} 
\footnotesize
\rho_\mathcal{S}(p,q)=
\begin{cases}
\rho_X(p,q),&\text{ if } p,q\in M,\\
\rho_\gamma(p,q), &\text{ if }p,q\in S_\gamma\text{ for some }\gamma\in \Gamma,\\
\min_{x\in Z_\gamma}\{\rho_\gamma(p,x)+\rho_X(x,q)\}, &\text{ if }p\in S_\gamma\text{ for some }\gamma\in \Gamma,\text{ and }q\in X,\\
H_{\gamma,\eta}(p,q), &\text{ if } p\in S_{\gamma},~ q\in S_{\eta}\text{ for }\gamma\neq \eta\in \Gamma,
\end{cases}
\end{equation}
where 
\begin{equation} H_{\gamma,\eta}(p,q)=\min\{\rho_\gamma(p,x)+\rho_X(x,y)+\rho_\eta(y,q)\colon x\in Z_\gamma,~ y\in Z_\eta\}.\end{equation}
\end{definition}

The second thing we formally define is the gapped segment associated to a pair of points and a distortion function. 

\begin{definition}[Gapped segments]
    Let $(\{x,y\},d)$ be a two point metric space and let $\omega$ be a distortion function with $\omega(d(x,y))\geq d(x,y)$. The metric space $(S_{\{x,y\},\omega},d_{\{x,y\},\omega})$ is the subset of the real line interval $[0,\omega(d(x,y))]$ obtained by removing a segment of length $d(x,y)$ in the middle. That is, 
    \begin{equation}S_{\{x,y\},\omega}:=\left[0,\frac{\omega(d(x,y))-d(x,y)}{2}\right]\cup\left[\frac{\omega(d(x,y))+d(x,y)}{2},\omega(d(x,y))\right]\subset \R,\end{equation}
    and $d_{\{x,y\},\omega}$ is the restriction of $|\cdot|$ to $S_{\{x,y\},\omega}$.
    We call $(S_{\{x,y\},\omega},d_{\{x,y\},\omega})$ the \emph{gapped segment associated to $\{x,y\}$ and $\omega$}. We regard $\{x,y\}$ as a subset of $S_{\{x,y\},\omega}$ identifying $x$ and $y$ with $0$ and $\omega(d(x,y))$ respectively. 
\end{definition} 

It is direct to check that, if $(S,\rho):=(S_{\{x,y\},\omega},d_{\{x,y\},\omega})$ is the gapped segment associated to a two point metric space $(\{x,y\},d)$ and any suitable distortion $\omega$, it holds that $S/\rho_{cf}=\{x,y\}$ and $\rho_{cf}(x,y)=d(x,y)$. 

\smallskip
We finish this subsection with the definition of the \emph{bending pseudometric}.

\begin{definition}[Bending]
Let $(M,\rho_M)$ be a pseudometric space, let $(x_i,y_i)_{i\in I}\subset M\times M$ be a set of pairs of points in $M$, and let $(a_i)_{i\in I}\subset [0,+\infty)$ be such that $\rho_M(x_i,y_i)\geq a_i$ for all $i\in I$. The \emph{bending} of $M$ by $\{(x_i,y_i,a_i)\}_{i\in I}$ is the pseudometric space $(M,\rho_B)$ where $\rho_B$ is the largest pseudometric in $M$ which is smaller than $\rho_M$ and such that $\rho_B(x_i,y_i)\leq a_i$ for all $i\in I$. 
\end{definition}

Let us point out a couple of easy facts about the previous definition.

\begin{remark}
\label{Remark:Bending_by_a_pair}
When bending a pseudometric space $(M,\rho_M)$ by a single triplet $(x,y,a)$, the resulting bending pseudometric $\rho_B$ can be explicitly written as
\begin{equation}\rho_B(p,q)=\min\{\rho_M(p,x)+\rho_M(y,q)+a,\rho_M(p,y)+\rho_M(q,x)+a,\rho_M(p,q)\}\end{equation}
for every $p,q\in M$.

\end{remark}

\begin{remark}
\label{Remark:Finite_bending}
Bending a pseudometric space by a finite set can be done sequentially: Let $(M,\rho_M)$ be a pseudometric space, let $(x_1,y_1),(x_2,y_2)\in M\times M$ and let $a_1,a_2\in [0,+\infty)$ such that $\rho_M(x_i,y_i)\geq a_i$ for $i=1,2$. If we denote by $\rho_B$ the bending of $(M,\rho_M)$ by the set $\{(x_i,y_i,a_i)\}_{i=1}^2$, then $\rho_B$ can also be obtained by bending $\rho_{B_1}$ by the single triplet $(x_2,y_2,a_2)$, where $\rho_{B_1}$ is the bending of $(M,\rho_M)$ by the single triplet $(x_1,y_1,a_1)$.
\end{remark}

\subsection{Preliminary results}

In this subsection, we prove technical results that we need in the sequel. All of these results aim to clarify the behavior of curve-flat functions and quotients with regards to attachment and bending. 

The first result we need is straightforward. 

\begin{lemma}[Finite bending preserves curve-flat functions]
\label{Lemma:Fin_ben_cf_functions}
    Let $(M,\rho)$ be a complete pseudometric space, and let $\rho_B$ be the bending pseudometric obtained bending $\rho$ by the finite set $\{(x_n,y_n,a_n)\}_{n\in F}$ where $x_n,y_n\in M$ and $\rho(x_n,y_n)\geq a_n$ for all $n\in F$. 

    If $f\colon (M,\rho_B)\rightarrow \R$ is a Lipschitz function such that $f\colon (M,\rho)\rightarrow \R$ is curve-flat, then $f$ is curve-flat for $\rho_B$ as well.
\end{lemma}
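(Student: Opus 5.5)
By Remark \ref{Remark:Finite_bending}, bending by a finite set can be done one triplet at a time. So we argue by induction on $|F|$, and it suffices to treat a single triplet $(x,y,a)$. At each stage of the induction, $f$ is Lipschitz for the final bent pseudometric. Since that pseudometric is smaller than every intermediate one, $f$ is also Lipschitz for each intermediate pseudometric. Hence the hypotheses of the single-triplet case are met at every step.

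Fix a compact $K\subset\R$ and a Lipschitz curve-fragment $\gamma\colon K\rightarrow (M,\rho_B)$. The goal is $\lambda((f\circ\gamma)(K))=0$. By Remark \ref{Remark:Bending_by_a_pair},
\begin{equation*}
\rho_B(p,q)=\min\{\rho(p,x)+\rho(y,q)+a,\ \rho(p,y)+\rho(q,x)+a,\ \rho(p,q)\}.
\end{equation*}
Let $K_0:=\gamma^{-1}(\{p\colon \rho_B(p,x)=0 \text{ or } \rho_B(p,y)=0\})$. This set is closed, and on it $f\circ\gamma$ takes at most the two values $f(x),f(y)$, so its image is null. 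For $t\in K\setminus K_0$, set $r(t):=\min\{\rho_B(\gamma(t),x),\rho_B(\gamma(t),y)\}>0$. Cover $K\setminus K_0$ by countably many compact pieces $K_n$, each a subset of $K$ of small diameter $\delta_n$ with $2\mathrm{Lip}(\gamma)\delta_n<\inf_{K_n} r$ (possible by continuity of $r$ and $\sigma$-compactness of the open set $K\setminus K_0$).

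On such a piece, the bent routes cannot be shortest. For $s,t\in K_n$ we have $\rho_B(\gamma(s),\gamma(t))\le \mathrm{Lip}(\gamma)\delta_n$. Any route through the shortcut costs at least $\rho(\gamma(s),x)+\rho(\gamma(t),y)\ge \rho_B(\gamma(s),x)+\rho_B(\gamma(t),y)\ge 2\inf r$. Here we use $\rho\ge\rho_B$; the $+a\ge0$ term can only help. Since $2\inf r>\mathrm{Lip}(\gamma)\delta_n$, the minimum must be attained by the third term, so $\rho(\gamma(s),\gamma(t))=\rho_B(\gamma(s),\gamma(t))$ on $K_n$. Thus $\gamma|_{K_n}$ is a Lipschitz curve-fragment for $\rho$. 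Curve-flatness of $f$ for $\rho$ then gives $\lambda((f\circ\gamma)(K_n))=0$. Countable subadditivity finishes: the images of $K_0$ and all the $K_n$ are null.

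The main obstacle is the localization step, namely making sure the pieces $K_n$ are compact and small relative to $r$. I would handle it by writing $K\setminus K_0=\bigcup_m\{t\colon r(t)\ge 1/m\}$. Each of these sets is compact, by continuity of $r$ for $\rho_B$ and of $\gamma$. I would then chop each into finitely many compact pieces of diameter less than $1/(2m\,\mathrm{Lip}(\gamma))$.
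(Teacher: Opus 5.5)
Your proof is correct and follows the same route as the paper. You reduce to a single triplet by sequential bending, discard the part of $K$ where $\gamma$ meets the zero-classes of $x,y$ (there $f\circ\gamma$ is two-valued), and use the explicit single-triplet formula to show that $\rho_B$ and $\rho$ agree locally along $\gamma$ elsewhere, so $\gamma$ is $\rho$-Lipschitz on countably many compact pieces. Your write-up is in places more careful than the paper's: you measure distance to $\{x,y\}$ with $\rho_B$, the pseudometric for which $\gamma$ is continuous; you verify the equality for all pairs of points in a piece rather than only near a centre; and you check the Lipschitz hypothesis at each intermediate stage of the induction.
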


\begin{proof}

    Since bending by finitely many points can be done by sequentially bending by single pairs of points (Remark \ref{Remark:Finite_bending}), it is enough to show the result for $\rho_B$, where $\rho_B$ is the bending of $M$ by a single triplet $(x,y,a)$ with $\rho(x,y)\geq a$. 

    It follows from the explicit formula for bending by single pairs (Remark \ref{Remark:Bending_by_a_pair}) that for every $p\in M$ such that $\rho(p,\{x,y\})>0$, there exists $\varepsilon>0$ such that $\rho_B(p,q)=\rho(p,q)$ for all $q\in B_\rho(p,\varepsilon)$. 
    Let $K\subset \R$ be a compact set and let $\gamma\colon K\rightarrow (M,\rho_B)$ be a Lipschitz curve fragment. Define
   \begin{equation}
       K_1=\{t\in K\colon \rho(\gamma(t),\{x,y\})>0\}\subset K.
  \end{equation}    
    By the previous observation, it follows that for every $t\in K_1$ there exists $\varepsilon>0$ such that the restriction $\gamma_{|[t-\varepsilon,t+\varepsilon]}\colon K\rightarrow (M,\rho)$ is still Lipschitz. Hence, since $f$ is curve-flat for $\rho$, we get that $\lambda((f\circ\gamma)(K_1))=0$. 

    On the other hand, it is clear that $(f\circ\gamma)(K\setminus K_1)=f(\{x,y\})$, which has Lebesgue measure $0$ since it is finite. In conclusion, $\lambda((f\circ\gamma)(K))=0$, and thus $f$ is curve-flat.

\end{proof}

The next lemma follows as a direct consequence.

\begin{lemma}[Finite bending of a purely $1$-unrectifiable metric space]
\label{Lemma:Finite_bending_p1u}
    Let $(M,d_M)$ be a complete purely $1$-unrectifiable metric space. The bending of $(M,d_M)$ by a finite set is also complete and purely $1$-unrectifiable.
\end{lemma}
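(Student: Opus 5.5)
By Remark \ref{Remark:Finite_bending}, bending by a finite set can be carried out one triplet at a time. Both completeness and pure $1$-unrectifiability are preserved under finite iteration, so by induction on the number of triplets it suffices to treat the bending $\rho_B$ of $(M,d_M)$ by a single triplet $(x,y,a)$ with $d_M(x,y)\geq a$. We will use the explicit formula of Remark \ref{Remark:Bending_by_a_pair} throughout.

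\emph{Completeness.} From the formula, $\rho_B\leq d_M$, and $\rho_B(p,q)<d_M(p,q)$ forces $\rho_B(p,q)\geq \min\{d_M(p,x)+d_M(q,y),\,d_M(p,y)+d_M(q,x)\}$. If $a=0$ we have $\rho_B(x,y)=0$, so the relevant metric space is $(M/\rho_B,\rho_B)$, and the claim concerns that quotient. Let $(p_n)$ be $\rho_B$-Cauchy. If $\liminf_n d_M(p_n,\{x,y\})=0$, a subsequence converges in $\rho_B$ to $x$ or $y$ (note $\rho_B\le d_M$). Otherwise $d_M(p_n,\{x,y\})\geq\delta>0$ for all large $n$. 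In that case, whenever $\rho_B(p_n,p_m)<\delta$ the formula gives $\rho_B(p_n,p_m)=d_M(p_n,p_m)$. Hence the tail is $d_M$-Cauchy, converges in $M$, and therefore converges in $\rho_B$.

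\emph{Pure $1$-unrectifiability.} Here we use the criterion from Section~\ref{Prelim}: a space is purely $1$-unrectifiable iff it is isometric to its curve-flat quotient, i.e.\ $(\rho_B)_{cf}=\rho_B$. By \eqref{equation:cf_pseudometric_def}, we must show that for all $p,q$ there are $1$-Lipschitz curve-flat functions $f\colon(M,\rho_B)\to\R$ with $|f(p)-f(q)|$ arbitrarily close to $\rho_B(p,q)$. Take $f=\rho_B(p,\cdot)$, which is $1$-Lipschitz for $\rho_B$ and realizes $|f(p)-f(q)|=\rho_B(p,q)$ exactly. Since $\rho_B\leq d_M$, $f$ is also $1$-Lipschitz on $(M,d_M)$. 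As $(M,d_M)$ is purely $1$-unrectifiable, every Lipschitz function on it is curve-flat: curve-fragments there have $\mathcal{H}^1$-null image, and so do their Lipschitz images. Lemma \ref{Lemma:Fin_ben_cf_functions} now says $f$ is curve-flat for $\rho_B$. Therefore $(\rho_B)_{cf}=\rho_B$, and the space (or its metric quotient, when $a=0$) is purely $1$-unrectifiable.

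The only delicate point is the completeness argument near $\{x,y\}$, and possibly the degenerate case $a=0$, where one has to pass to the quotient metric space. The curve-flat part is essentially immediate from Lemma \ref{Lemma:Fin_ben_cf_functions}, and I expect no real obstacle there.
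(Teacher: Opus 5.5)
Your proposal is correct, and its core step (applying Lemma \ref{Lemma:Fin_ben_cf_functions} to $\rho_B(p,\cdot)$, which is $d_M$-Lipschitz and hence curve-flat on the purely $1$-unrectifiable space, to get $(\rho_B)_{cf}=\rho_B$) is exactly the ``direct consequence'' the paper has in mind. That step needs no reduction to a single triplet, since Lemma \ref{Lemma:Fin_ben_cf_functions} already handles finite bendings; the one-triplet induction is only needed for your completeness argument, which is correct and fills in a detail the paper leaves implicit.
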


Additionally, Lemma \ref{Lemma:Fin_ben_cf_functions} allows us to describe with precision the curve-flat pseudometric associated to a single triplet bending.

\begin{lemma}[Finite bendings and curve-flat quotients commute]\label{Lemma:CF_of_pari_bending_formula}
    Let $(M,\rho)$ be a pseudometric space, let $x,y\in M$ with $x\neq y$, and let $a>0$ satisfy $a\leq \rho(x,y)$. Then 
    \begin{equation}(\rho_B)_{cf}=(\rho_{cf})_{B'},\end{equation}
    where 
    \begin{itemize}
        \item $\rho_B$ is the bending of $M$ by $(x,y,a)$
        \item $(\rho_{cf})_{B'}$ is the bending of $\rho_{cf}$ by $(x,y,b)$, where $b=\min\{\rho_{cf}(x,y),a\}$. 
    \end{itemize}
\end{lemma}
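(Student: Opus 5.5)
We prove the two inequalities $(\rho_B)_{cf}\le(\rho_{cf})_{B'}$ and $(\rho_B)_{cf}\ge(\rho_{cf})_{B'}$ using the dual description \eqref{equation:cf_pseudometric_def}, valid in pseudometric spaces: $(\rho_B)_{cf}(p,q)$ is the supremum of $|f(p)-f(q)|$ over $1$-Lipschitz curve-flat $f\colon(M,\rho_B)\to\R$.

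\emph{Lower bound.} First we check that $(\rho_{cf})_{B'}$ is a legitimate bending. We have $b\le\rho_{cf}(x,y)$, so Remark \ref{Remark:Bending_by_a_pair} gives
\begin{equation*}
(\rho_{cf})_{B'}(p,q)=\min\{\rho_{cf}(p,x)+\rho_{cf}(y,q)+b,\ \rho_{cf}(p,y)+\rho_{cf}(x,q)+b,\ \rho_{cf}(p,q)\}.
\end{equation*}
Fix $p$ and set $f:=(\rho_{cf})_{B'}(p,\cdot)$. This function is $1$-Lipschitz for $(\rho_{cf})_{B'}$. Since $(\rho_{cf})_{B'}\le\rho_{cf}\le\rho$ and $b\le a$, the maximality defining $\rho_B$ gives $(\rho_{cf})_{B'}\le\rho_B$, so $f$ is also $1$-Lipschitz for $\rho_B$. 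Next, $f$ is $1$-Lipschitz for $\rho_{cf}$, so by Lemma \ref{Lemma:CF_pseudometric_is_CF} it is curve-flat for $\rho$. Lemma \ref{Lemma:Fin_ben_cf_functions} then shows $f$ is curve-flat for $\rho_B$. Evaluating $f$ at $p$ and $q$ yields $(\rho_B)_{cf}(p,q)\ge(\rho_{cf})_{B'}(p,q)$.

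\emph{Upper bound.} Let $f\colon(M,\rho_B)\to\R$ be $1$-Lipschitz and curve-flat. We claim that $f$ is $1$-Lipschitz for $\rho_{cf}$ and that $|f(x)-f(y)|\le b$; maximality of the bending $(\rho_{cf})_{B'}$ then gives $|f(p)-f(q)|\le(\rho_{cf})_{B'}(p,q)$.
\begin{itemize}
\item Since $\rho_B\le\rho$, every $\rho$-Lipschitz curve fragment is $\rho_B$-Lipschitz, so $f$ is $1$-Lipschitz and curve-flat for $\rho$. By \eqref{equation:cf_pseudometric_def} we get $|f(p)-f(q)|\le\rho_{cf}(p,q)$ for all $p,q$.
\item We also have $|f(x)-f(y)|\le\rho_B(x,y)\le a$, and, as a special case of the previous item, $|f(x)-f(y)|\le\rho_{cf}(x,y)$. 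Together these give $|f(x)-f(y)|\le\min\{\rho_{cf}(x,y),a\}=b$.
\end{itemize}
Taking the supremum over $f$ gives $(\rho_B)_{cf}\le(\rho_{cf})_{B'}$.

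The main obstacle is the subtle step in the upper bound: a function that is curve-flat for $\rho_B$ must be curve-flat for $\rho$. This direction is easy because $\rho_B\le\rho$. The genuinely delicate direction, passing curve-flatness from $\rho$ to $\rho_B$, is the content of Lemma \ref{Lemma:Fin_ben_cf_functions}, which we invoke directly in the lower bound. What remains is routine: checking that \eqref{equation:cf_pseudometric_def} holds in the pseudometric setting, and that $(\rho_{cf})_{B'}$ is well defined, which needs $b\le\rho_{cf}(x,y)$.
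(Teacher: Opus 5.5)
Your proof is correct and follows the paper's argument. The lower bound is the paper's verbatim: you test with $(\rho_{cf})_{B'}(p,\cdot)$, apply Lemma \ref{Lemma:CF_pseudometric_is_CF} and then Lemma \ref{Lemma:Fin_ben_cf_functions}. The upper bound is the same maximality-of-bending argument, which you apply to each pseudometric $(p,q)\mapsto|f(p)-f(q)|$ rather than to $(\rho_B)_{cf}$ directly; this has the small advantage of actually justifying $(\rho_B)_{cf}\le\rho_{cf}$, which the paper only asserts.
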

\begin{proof}
    Note that $(\rho_{cf})_{B'}\leq \rho_B$. Indeed, $\rho_B$ is the largest pseudometric in $M$ smaller than $\rho$ and such that $\rho(x,y)\leq a$. Since $(\rho_{cf})_{B'}$ also satisfies these conditions, the claim follows. As a consequence, the function $g_p:(M,\rho_B)\rightarrow \mathbb{R}$ defined by $g_p(q)=(\rho_{cf})_{B'}(p,q)$ is 1-Lipschitz for each $p\in M$. By Lemma \ref{Lemma:CF_pseudometric_is_CF}, the function $g_p$ is curve-flat for $(M,\rho)$, and thus by Lemma \ref{Lemma:Fin_ben_cf_functions} we conclude that $g_p$ is curve-flat for $\rho_B$. Since this holds for all $p\in M$, we get $(\rho_B)_{cf}\geq (\rho_{cf})_{B'}$.
    
    On the other hand, the inequality $(\rho_B)_{cf}\leq(\rho_{cf})_{B'}$ follows from the definition of bending by noting that $(\rho_B)_{cf}\leq \rho_{cf}$ and $(\rho_B)_{cf}(x,y)\leq b$.
\end{proof}

The next result is the last of the technical tools regarding bending and attachment that we need. It can be interpreted as saying that, for purely $1$-unrectifiable frames, the attachment and curve-flat pseudometrics commute (after necessary adjustment through bending). 

\begin{lemma}[Attachment and curve-flat pseudometrics commute]
\label{Lemma:Attach_and_CF_commute}
    Let $(M,d_M)$ be a complete purely $1$-unrectifiable metric space. For each $n\in\N$, let $(S_n,\rho_n)$ be a compact pseudometric space such that:
    \begin{itemize}
        \item $S_n\cap S_m\subset M$ for all $n\neq m$.
        \item The intersection $S_n\cap M$ is a $2$-point set $\{x_n,y_n\}$ for all $n\in\N$, where the metric $d_M$ and the pseudometric $\rho_n$ coincide.
    \end{itemize}
    Write $Y:=M\cup\left(\bigcup_nS_n\right)$ and  $\rho_M$ the pseudometric obtained by attachment of the threads $(S_n,\rho_n)_n$ into the frame $(M,d_M)$. 

    Then the curve-flat pseudometric $(\rho_Y)_{cf}$ on $Y$ coincides with the pseudometric obtained by attachment of the threads $(S_n,\rho_{B_n})_n$ into the frame $(M,\rho_{B})$, where 
    \begin{itemize}
        \item $(M,\rho_{B})$ is the bending of $(M,d_M)$ by $\{(x_n,y_n,(\rho_n)_{cf}(x_n,y_n))\}$.
    \item $(S_n,\rho_{B_n})$ is the bending of $(S_n,(\rho_n)_{cf})$ by $\{(x_n,y_n,\rho_B(x_n,y_n))\}$
    \end{itemize}
    
\end{lemma}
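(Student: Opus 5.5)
Denote by $\sigma$ the pseudometric on $Y$ obtained by attaching the threads $(S_n,\rho_{B_n})_n$ to the frame $(M,\rho_B)$. The plan is to prove the two inequalities $(\rho_Y)_{cf}\le\sigma$ and $(\rho_Y)_{cf}\ge\sigma$ separately. The first is the easy one and comes from maximality. The second rests on the functional description \eqref{equation:cf_pseudometric_def}: I will show that $\sigma(p,\cdot)$ is $1$-Lipschitz and curve-flat on $(Y,\rho_Y)$.

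For $(\rho_Y)_{cf}\le\sigma$, first observe that $\sigma$ is the largest pseudometric on $Y$ which is dominated by $d_M$ on $M$, dominated by $(\rho_n)_{cf}$ on each $S_n$, and satisfies $\sigma(x_n,y_n)\le(\rho_n)_{cf}(x_n,y_n)$. This is a direct consequence of the definitions of attachment and bending as largest pseudometrics. Let me check that $(\rho_Y)_{cf}$ satisfies these three constraints. Restricting the pseudometric to $S_n$ only shrinks the family of $1$-Lipschitz curve-flat functions, so by \eqref{equation:cf_pseudometric_def} we get $(\rho_Y)_{cf}|_{S_n}\le (\rho_Y|_{S_n})_{cf}\le(\rho_n)_{cf}$, using also $\rho_Y|_{S_n}\le\rho_n$. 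The same argument gives $(\rho_Y)_{cf}|_M\le (d_M)_{cf}=d_M$, because $M$ is purely $1$-unrectifiable. The constraint at $(x_n,y_n)$ is then the special case of the bound on $S_n$. Finally, $\rho_B$ and $\rho_{B_n}$ are defined as the largest pseudometrics under these constraints, so the maximal pseudometric satisfying them is exactly $\sigma$, and $(\rho_Y)_{cf}\le\sigma$ follows.

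For the reverse inequality, fix $p\in Y$ and put $g=\sigma(p,\cdot)$. Since $\sigma\le\rho_Y$ (each piece is dominated), $g$ is $1$-Lipschitz for $\rho_Y$, so it remains to show that $g$ is curve-flat on $(Y,\rho_Y)$. Take a Lipschitz curve fragment $\gamma\colon K\to Y$. I will split $K$ as $\gamma^{-1}(M)$ together with the sets $\gamma^{-1}(S_n\setminus M)$, in the same way as in the proof of Proposition \ref{Proposition:Main_Theorem_A_general}.

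\emph{On the frame.} The restriction $g|_M$ is Lipschitz for $\rho_B$, and $\rho_B$ is a countable bending of the purely $1$-unrectifiable $d_M$. On $M$ we have $\rho_Y|_M=d_M$, so $\gamma$ restricted to $\gamma^{-1}(M)$ is Lipschitz into $(M,d_M)$. Hence $g\circ\gamma$ is Lipschitz on that set, where $M$ is p1u, and therefore $\lambda(g\circ\gamma(\gamma^{-1}(M)))=0$. Note that this step needs neither Lemma \ref{Lemma:Fin_ben_cf_functions} nor any bending argument.

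\emph{On the threads.} On $S_n$, the function $g$ is $1$-Lipschitz for $\rho_{B_n}$, which is the bending of $(\rho_n)_{cf}$ by a single pair. Lemma \ref{Lemma:CF_pseudometric_is_CF} applies to $g$ only once it is known to be Lipschitz for $(\rho_n)_{cf}$ itself, and that fails because of the bend. To get around this I will use the explicit formula in Remark \ref{Remark:Bending_by_a_pair}. Away from $\{x_n,y_n\}$, $\rho_{B_n}$ locally agrees with $(\rho_n)_{cf}$, so the argument of Lemma \ref{Lemma:Fin_ben_cf_functions} shows that $g|_{S_n}$ is locally Lipschitz for $(\rho_n)_{cf}$ away from the anchors. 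Lemma \ref{Lemma:CF_pseudometric_is_CF} then gives curve-flatness on $(S_n,\rho_n)$ there, while the anchors contribute only a finite set. One more point is needed: $\rho_Y|_{S_n}$ may be strictly smaller than $\rho_n$, because of shortcuts through $M$. These shortcuts only occur near the anchor points, so again by localization, $\gamma$ on $\gamma^{-1}(S_n\setminus M)$ is locally Lipschitz for $\rho_n$.

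Summing over the countably many pieces gives $\lambda(g\circ\gamma(K))=0$. I expect the main obstacle to be exactly this localization near the anchors. There one needs compactness of $S_n$ to ensure that $\rho_n(q,\{x_n,y_n\})>0$ implies $\rho_Y$ agrees with $\rho_n$ in a neighbourhood of $q$. This holds because any path leaving $S_n$ must pass through an anchor, and the distance to the anchors is bounded below near $q$.
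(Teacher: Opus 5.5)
Your argument is correct and has the same structure as the paper's proof. You get $(\rho_Y)_{cf}\le\sigma$ from maximality of the bendings and the attachment. You get $(\rho_Y)_{cf}\ge\sigma$ by showing that $\sigma(p,\cdot)$ is $1$-Lipschitz and curve-flat, checking this separately on $M$ (pure $1$-unrectifiability) and on each $S_n$ (Lemma \ref{Lemma:CF_pseudometric_is_CF}). However, the localization you set up on the threads rests on two false premises. It works, but it can be dropped entirely.

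\textbf{The bend does not block Lemma \ref{Lemma:CF_pseudometric_is_CF}.} Since $\rho_{B_n}$ is a bending of $(\rho_n)_{cf}$, we have $\rho_{B_n}\le(\rho_n)_{cf}$. So a function that is $1$-Lipschitz for $\rho_{B_n}$ satisfies $|g(q)-g(q')|\le\rho_{B_n}(q,q')\le(\rho_n)_{cf}(q,q')$, and is therefore $1$-Lipschitz for $(\rho_n)_{cf}$. Making the pseudometric smaller makes the Lipschitz condition stronger, not weaker. The lemma therefore applies directly to $g|_{S_n}$, which is exactly what the paper does.

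\textbf{There are no shortcuts through $M$.} By the definition of the attachment, $\rho_Y|_{S_n}=\rho_n$ exactly. This is consistent precisely because $d_M(x_n,y_n)=\rho_n(x_n,y_n)$. A route that leaves $S_n$ at one anchor and re-enters at the other costs $\rho_n(p,x_n)+\rho_n(x_n,y_n)+\rho_n(y_n,q)\ge\rho_n(p,q)$. A route that leaves and re-enters at the same anchor is handled by the triangle inequality in $S_n$. Hence $\gamma$ restricted to $\gamma^{-1}(S_n)$ is Lipschitz into $(S_n,\rho_n)$ with no localization and no compactness. The ``main obstacle'' you anticipate does not arise.
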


\begin{proof}
  We split the proof into three parts. 
  
\smallskip
	\noindent
	\textbf{Part 1: Set up.}
    
     Write $a_n:=(\rho_n)_{cf}(x_n,y_n)$ and $b_n:=\rho_B(x_n,y_n)$ for all $n\in\N$. First of all, in order to properly define the pseudometric of the attachment of $(M,\rho_B)$ with the threads $(S_n,\rho_{B_n})_n$, we must check that $\rho_B$ and $\rho_{B_n}$ coincide in $S_n\cap M=\{x_n,y_n\}$ for every $n\in\N$. However, this clearly holds since $\rho_{B_n}$ is the pseudometric in $S_n$ defined by bending $(S_n,\rho_n)$ by the single triplet $(x_n,y_n,b_n)$. 
        
     Let us write $\rho_*$ as the pseudometric defined by the attachment of $(M,\rho_B)$ with $(S_n,\rho_{B_n})_n$, that is, $\rho_*$ is the largest pseudometric in $Y$ which coincides with $\rho_B$ in $M$ and coincides with $\rho_{B_n}$ in $S_n$ for every $n\in\N$.

The goal is to show that $(\rho_Y)_{cf}=\rho_*$.

\smallskip
	\noindent
	\textbf{Part 2: $(\rho_Y)_{cf}\geq \rho_*$}      
    
    By definition of the curve-flat pseudometric, we will be done if we show that for any $p\in Y$, the function $g_p\colon (Y,\rho_Y)\rightarrow \R$ given by $g_p(q):=\rho_*(p,q)$ for all $q\in Y$ is $1$-Lipschitz and curve-flat for the pseudometric $\rho_Y$.

        First of all, notice that both $\rho_Y$ and $\rho_*$ are attachment pseudometrics with the same frame and threads, where all pseudometrics used to define $\rho_*$ are smaller than those used to define $\rho_Y$. Hence $\rho_*\leq \rho_Y$ and it follows that $g_p$ is $1$-Lipschitz for $\rho_Y$.

        Next, we show that the restriction $(g_p)_{|S_n}\colon (S_n,\rho_n)\rightarrow \R$ is curve-flat for all $n\in \N$. Fix $n\in \N$. The pseudometric $\rho_*$ restricted to $S_n$ coincides with the pseudometric $\rho_{B_n}$. Therefore, the map $(g_p)_{|S_n}\colon (S_n,\rho_{B_n})\rightarrow \R$ is $1$-Lipschitz. Since $\rho_{B_n}$ is obtained by bending $(S_n,(\rho_n)_{cf})$, it holds that $\rho_{B_n}\leq (\rho_n)_{cf}$ and hence $(g_p)_{|S_n}\colon (S_n,(\rho_n)_{cf})\rightarrow \R$ is also $1$-Lipschitz. By Lemma \ref{Lemma:CF_pseudometric_is_CF}, we get that $(g_p)_{|S_n}$ is curve-flat with respect to the pseudometric $\rho_n$, as desired. 

        To finish this part, let $\gamma\colon K\rightarrow (Y,\rho_Y)$ be a Lipschitz curve fragment. For every $n\in\N$, denote $K_n:=\gamma^{-1}(S_n)$ and $K_M:=\gamma^{-1}(M)$. Clearly $K=K_M\cup \bigcup_{n\in \N}K_n$ and thus 
        \begin{equation}
            \left(g_p\circ\gamma\right)(K)=(g_p\circ\gamma)(K_M)\cup \bigcup_{n\in\N}(g_p\circ\gamma)(K_n).
        \end{equation}
        Since $M$ is purely $1$-unrectifiable, it is immediate that $(g_p\circ\gamma)(K_M)$ has Lebesgue measure $0$. Additionally, the same holds for $(g_p\circ\gamma)(K_n)$ by considering the restriction $\gamma_{|K_n}$, since we have shown that $(g_p)_{|S_n}$ is curve-flat for the pseudometric $\rho_n$. Therefore $\lambda((g_p\circ\gamma)(K))=0$ and thus $g_p$ is curve-flat, as claimed.

\smallskip
	\noindent
	\textbf{Part 3: $\rho_*\geq (\rho_M)_{cf}$} 
    
    We will prove this by showing that $(\rho_Y)_{cf}$ coincides with $\rho_B$ in $M$ and with $\rho_{B_n}$ in $S_n$. This will indeed prove the desired inequality since, by definition, the attachment pseudometric $\rho_*$ is the biggest pseudometric in $Y=M\cup\left(\bigcup_{n\in\N}S_n\right)$ satisfying these conditions. In fact, using Part 2, we only need to show that $ \rho_{B_n}\geq(\rho_Y)_{cf}$ and $\rho_B\geq(\rho_Y)_{cf}$ in each $S_n$ and in $M$ respectively.
        
        First, we prove that $\rho_B\geq(\rho_Y)_{cf}$ in $M$. The pseudometric $\rho_B$ is, by definition, the largest pseudometric in $M$ which is smaller than $d_M$ and such that $\rho_B(x_n,y_n)\leq a_n$ for all $n\in\N$, so we only need to check that the restriction of $(\rho_Y)_{cf}$ to $X$ also satisfies these conditions. 

        Indeed, $(\rho_Y)_{cf}$ is smaller than $d_M$ in $M$, since it is smaller than $\rho_Y$ which coincides with $d_M$ in $M$. Secondly, for every $n\in\N$, we have that $(\rho_Y)_{cf}(x_n,y_n)\leq a_n$. In fact, it holds that $(\rho_Y)_{cf}\leq (\rho_n)_{cf}$ in $S_n$. Certainly, for any $p,q\in S_n$ we have that 
        \begin{equation}
            (\rho_Y)_{cf}(p,q)=\sup\{|f(p)-f(q)|\colon f\text{ is }1\text{-Lipschitz and }f\in CF(Y,\rho_Y)\},
        \end{equation}
        so it suffices to observe that $|f(p)-f(q)|\leq (\rho_n)_{cf}(p,q)$ for all $f\colon (Y,\rho_Y)\rightarrow \R$ curve-flat $1$-Lipschitz functions. This holds because the restriction $f_{|S_n}\colon (S_n,\rho_n)\rightarrow \R$ of such a function is also $1$-Lipschitz and curve-flat.

        Next, we show that for every $n\in\N$, $\rho_{B_n}$ is larger than $(\rho_Y)_{cf}$ in $S_n$. We use a similar argument: the pseudometric $\rho_{B_n}$ is by definition the largest metric on $S_n$ which is smaller than $(\rho_n)_{cf}$ and such that $\rho_{B_n}(x_n,y_n)\leq b_n$. We clearly have that $(\rho_Y)_{cf}(x_n,y_n)\leq b_n$ (in fact, $(\rho_Y)_{cf}(x_n,y_n)=b_n$), since $(\rho_Y)_{cf}$ coincides with $\rho_B$ in $M$. Moreover, we have already shown above that $(\rho_Y)_{cf}\leq (\rho_n)_{cf}$ in $S_n$. We conclude that $\rho_{B_n}\geq(\rho_Y)_{cf}$ in $S_n$, as desired.

\end{proof}

\subsection{Main construction}\label{section:Main_Construction}

\subsubsection{Assumptions on parameters.}
Our construction depends on several parameters, which are not unique for a given triplet $(M,\alpha,\varepsilon)$. We say that a tuple $\Gamma=(\omega,(\alpha_n,x_n,y_n)_n)$ is \emph{admissible for $(M,\alpha,\varepsilon)$}, if all the following conditions are met:

\begin{enumerate}[label=(a.\arabic*)]
    \item $\omega:[0,\infty)\rightarrow [0,\infty)$ is a local distortion that satisfies $\omega(\diam(M))= \diam(M)$. By concavity, this implies that $\omega(t)\geq t$ for all $t\in [0,\diam(M)]$, and thus $\omega\circ d\geq d$ in $M$. \label{adm:distortion}
   
    \item $(\alpha_n)_n$ is an increasing sequence of ordinals less than $\alpha$ such that for every $\beta<\alpha$ there exists $N\in\N$ so that $\alpha_n\geq \beta$ for all $n>N$. \label{adm:ordinal_approach_alpha}
    
    \item $(x_n)_n$ and $(y_n)_n$ are sequences of points in $M$ such that $x_n\neq y_n$ for every $n\in\N$ and for every $p\neq q\in M$ with $d(p,q)\leq 2^{-k}\diam(M)$, $k\in \N\cup\{0\}$, there exists $n\in\N$ such that 
    \begin{equation}
        2\left((\omega\circ d)(p,x_n)+(\omega\circ d)(q,y_n)\right)\leq 2^{-k}\varepsilon d(p,q).
    \end{equation}\label{adm:x_ny_n_pairs}
    or
    \begin{equation}
        2\left((\omega\circ d)(p,y_n)+(\omega\circ d)(q,x_n)\right)\leq 2^{-k}\varepsilon d(p,q).
\end{equation}

    \item $(d(x_n,y_n))_n$ goes to $0$.\label{adm:d(xnyn)_goes_to_zero}

\end{enumerate}

\smallskip
It is not obvious that an admissible tuple exists for a given $(M,\alpha,\varepsilon)$. In fact, condition \ref{adm:d(xnyn)_goes_to_zero} cannot hold in finite metric spaces. However, this is the only obstacle for the existence of an admissible tuple.
\begin{lemma}\label{lemma:exists_admissible}
    Given an infinite compact metric space $(M,d)$, an $\varepsilon>0$ and a countable ordinal $\alpha$, there exists an admissible tuple for $(M,\alpha, \varepsilon)$.
\end{lemma}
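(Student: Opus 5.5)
We construct the four ingredients of the tuple separately; only \ref{adm:x_ny_n_pairs} together with \ref{adm:d(xnyn)_goes_to_zero} needs real work.

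\emph{Distortion.} Write $D:=\diam(M)$, which is positive because $M$ is infinite. Take $\omega(t):=\sqrt{Dt}$. This function is concave, satisfies $\omega(0)=0$ and $\omega'(0)=\infty$, so it is a local distortion, and $\omega(D)=D$. This gives \ref{adm:distortion}.

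\emph{Ordinals.} Since $\alpha$ is countable, it has a cofinal sequence of type at most $\omega$. If $\alpha=\beta+1$ is a successor, take $\alpha_n:=\beta$ for all $n$; a non-strictly increasing sequence is fine here. If $\alpha$ is a limit, enumerate $\alpha=\{\gamma_k\}_k$ and set $\alpha_n:=\max\{\gamma_1,\dots,\gamma_n\}$, which is nondecreasing and eventually exceeds every $\beta<\alpha$. For $\alpha=0$ the condition is vacuous, or one may take $\alpha_n=0$ by convention. This gives \ref{adm:ordinal_approach_alpha}.

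\emph{Pairs.} Since $M$ is compact, fix for each $j\in\N$ a finite $\delta_j$-net $F_j\subset M$, with $\delta_j\to0$ to be chosen. Let $(x_n,y_n)_n$ enumerate, block by block in $j$, all pairs $(a,b)\in F_j\times F_j$ with $a\neq b$ and $d(a,b)\leq 2^{-j}$. This already forces \ref{adm:d(xnyn)_goes_to_zero}, since each block is finite.

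To check \ref{adm:x_ny_n_pairs}, fix $p\neq q$ with $d(p,q)\leq 2^{-k}D$. The target is $2^{-k}\varepsilon d(p,q)>0$, so choose $j$ large so that the following hold simultaneously:
\begin{itemize}
\item $4\,\omega(\delta_j)\leq 2^{-k}\varepsilon d(p,q)$;
\item $2\delta_j<d(p,q)$;
\item $d(p,q)+2\delta_j\leq 2^{-j}$ fails to be needed.
\end{itemize}

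Here is the subtle point. Pairs that are far apart must also be approximated, but block $j$ only contains pairs at distance at most $2^{-j}$. So instead let block $j$ contain all distinct pairs $(a,b)$ from $F_j$, with no distance restriction. Then \ref{adm:d(xnyn)_goes_to_zero} fails. To fix this, I will use that $M$ is infinite and compact, hence has an accumulation point $z$. Interleave between consecutive blocks, or simply repeat, pairs of points near $z$ at distance tending to $0$; but \ref{adm:d(xnyn)_goes_to_zero} demands that the entire sequence tends to $0$, so interleaving is not enough either.

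The real fix is to exploit the $2^{-k}$ scaling. A pair $p,q$ with $d(p,q)\leq 2^{-k}D$ requires accuracy $2^{-k}\varepsilon d(p,q)$. Let block $j$ consist of the pairs $(a,b)\in F_j^2$ with $a\neq b$, and additionally impose a lower bound $d(a,b)\geq\eta_j$ in later blocks. Since $M$ is compact, finitely many blocks can serve each distance scale. Concretely, for each $m$, the pairs $p,q$ with $d(p,q)\in(2^{-m-1}D,2^{-m}D]$ need only finitely many net-pairs at accuracy $\theta_m:=\omega^{-1}(2^{-m-3}\varepsilon\,2^{-m-1}D)$. These net-pairs have mutual distance at most $2^{-m}D+2\theta_m$, which tends to $0$ as $m\to\infty$. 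Enumerating scale $m=0,1,2,\dots$ in order therefore yields \ref{adm:d(xnyn)_goes_to_zero}.

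The verification then goes as follows. For $a,b$ in the net with $d(p,a),d(q,b)\leq\theta_m$, we have
$$2\big(\omega(d(p,a))+\omega(d(q,b))\big)\leq 4\,\omega(\theta_m)\leq 2^{-m}\varepsilon d(p,q)\leq 2^{-k}\varepsilon d(p,q),$$
where the last step uses $m\geq k$. Moreover $a\neq b$ as long as $2\theta_m<d(p,q)$, which can be arranged by shrinking $\theta_m$, since $\omega(t)\geq t$ gives $\theta_m\leq 2^{-2m-4}\varepsilon D$; we may assume $\varepsilon<1$.

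The main obstacle is exactly this reconciliation between \ref{adm:x_ny_n_pairs} and \ref{adm:d(xnyn)_goes_to_zero}, and the scale-by-scale enumeration resolves it.
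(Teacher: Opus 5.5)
Your final scale-by-scale construction is correct and is essentially the paper's proof. For each dyadic scale $(2^{-m-1}D,2^{-m}D]$ you take a finite net whose accuracy in $\omega\circ d$ is of order $2^{-m}\varepsilon\cdot 2^{-m}D$, keep only net pairs at distance at most $2^{-m}D$ plus twice the net radius, and enumerate these finite blocks in order. The paper does the same, except that it takes the nets directly in $(M,\omega\circ d)$, which is equivalent to your $\theta_m=\omega^{-1}(\cdot)$-nets in $(M,d)$.

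When writing it up, delete the abandoned first attempts. Also state explicitly that infinitely many blocks are nonempty: since $M$ is infinite and compact, it has pairs at arbitrarily small positive distance, so $(x_n,y_n)_n$ really is an infinite sequence.
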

\begin{proof}
It is clear that there exists a local distortion $\omega\colon [0,+\infty)\rightarrow[0,+\infty)$ such that $\omega(\diam(M))=\diam(M)$ (this can be done, for instance, by scaling a snowflake distortion $\omega(t)=t^\beta$, $\beta<1$). Similarly, it is easy to construct a countable sequence of ordinals satisfying \ref{adm:ordinal_approach_alpha}: If $\alpha$ is a successor ordinal, it suffices to choose $\alpha_n=\alpha-1$ for all $n\in\N$; and if $\alpha$ is a limit ordinal, we only need that $\sup_{n\in\N}\alpha_n=\alpha$. Note that if $\alpha=0$, the condition is trivially satisfied.

It only remains to define the sequences $(x_n)_n$ and $(y_n)_n$ satisfying \ref{adm:x_ny_n_pairs} and \ref{adm:d(xnyn)_goes_to_zero}. We may assume that $\varepsilon<1$ without loss of generality. Let $\delta_n:=2^{-(n-1)}\diam(M)$ and let $\varepsilon_n:=\frac{2^{-n}\delta_{n+1}}{4}\varepsilon$ for every $n\in\N$. Choose, for every $n\in \N$, a finite set $E_n\subset M$ that is $\varepsilon_n$-dense in $(M,\omega\circ d)$, and define 
\begin{equation}
    F_n:=\{\{x,y\}\colon x\neq y\in E_n, \text{ and }d(x,y)\leq \delta_n+2\varepsilon_n\},
\end{equation}
which is also finite. We claim that for every $k\in \N\cup\{0\}$ and every $p\neq q\in M$ with $2^{-(k+1)}\diam(M)\leq d(p,q)\leq 2^{-k}\diam(M)$ there exist $\{x,y\}\in F_{k+1}$ such that 
\begin{equation}
    2\left((\omega\circ d)(p,x)+(\omega\circ d)(q,y)\right)\leq 2^{-k}\varepsilon d(p,q).
\end{equation}
Fix $k\in \N\cup\{0\}$ and $p\neq q \in M$ with $d(p,q)\leq 2^{-k}\diam (M)$. We have that $\delta_{k+2}\leq d(p,q)\leq \delta_{k+1}$. Since $E_{k+1}$ is $\varepsilon_{k+1}$-dense in $(M,\omega\circ d)$, there exist $x,y\in E_{k+1}$ such that 
    \begin{equation}
        (\omega\circ d)(x,p)\leq \varepsilon_{k+1}\qquad\text{and}\qquad(\omega\circ d)(y,q)\leq \varepsilon_{k+1}.
    \end{equation}
    The triangle inequality and the fact that $d\leq \omega\circ d$ imply that $d(x,y)\leq \delta_{k+1}+2\varepsilon_{k+1}$. On the other hand, reverse triangle inequality shows 
    \begin{align}
    (\omega\circ d)(x,y)&\geq \delta_{k+2}-2\varepsilon_{k+1}=\delta_{k+2}\left(1-\frac{2^{-k+1}}{2}\varepsilon\right)>0,    
    \end{align}
    where we used that $\varepsilon<1$. 
    
    Therefore, $x\neq y$ and $\{x,y\}\in F_{k+1}$. Finally, observe that
    \begin{align}
        \left((\omega\circ d)(p,x)+(\omega\circ d)(q,y)\right)&\leq 2\varepsilon_{k+1}\leq 2\frac{2^{-k}\delta_{k+2}}{4}\varepsilon\leq\frac{1}{2}2^{-k}\varepsilon d(p,q),
    \end{align}
    as desired. 

    To finish the proof, notice that $\bigcup_{n\in\N}F_n$ is a countable set. Since $M$ is infinite and compact, $F_n$ is nonempty for infinitely many $n\in\N$, so there exist sequences $(x_n)_n,(y_n)_n\subset M$ such that $\bigcup_{n\in\N}F_n=\bigcup_{n\in\N}\{x_n,y_n\}$. By the previous discussion, condition \ref{adm:x_ny_n_pairs} is satisfied, and since each $F_n$ is finite and $d(x,y)\leq \delta_n+2\varepsilon_n$ for all $\{x,y\}\in F_n$, condition \ref{adm:d(xnyn)_goes_to_zero} also holds.

\end{proof}

\begin{remark}
    Note that in the admissibility conditions regarding the sequence $(\alpha_n,x_n,y_n)_n$, only the tail of the sequence matters, in the sense that if we concatenate a finite sequence of the form $(1,p_i,q_i)_{i=1}^N$ before it, the resulting tuple will be admissible if the tuple $\Gamma=(\omega,(\alpha_n,x_n,y_n)_n)$ is admissible. 
\end{remark}

The admissibility condition \ref{adm:x_ny_n_pairs} on the pairs $(x_n,y_n)_n$ implies that these pairs are ``dense enough" in the metric space $M$ even though, by \ref{adm:d(xnyn)_goes_to_zero} $(d(x_n,y_n))_n$ goes to $0$. This is more concretely expressed in the following lemma.

\begin{lemma}
\label{Lemma:Full_bending_isom_better_small_scales}
    Let $\Gamma=(\omega,(\alpha_n,x_n,y_n)_n)$ be an admissible tuple for $(M,\alpha,\varepsilon)$. Let $\rho$ be any metric defined on $M$ satisfying $d\leq \rho\leq \omega\circ d$. Denote by $\rho_B$ the bending pseudometric obtained bending $\rho$ by the set of all triplets $(x_n,y_n,d(x_n,y_n))_n$. 
.

    Then, for every $k\in\mathbb{N}\cup \{0\}$ and every $p\neq q\in M$ with $d(p,q)\leq 2^{-k}\diam(M)$ it holds that 
    \begin{equation}
    \label{eq:isom_better_small_scales}
    d(p,q)\leq \rho_B(p,q)\leq (1+2^{-k}\varepsilon)d(p,q).
    \end{equation}
\end{lemma}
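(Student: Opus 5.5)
The two inequalities need different arguments. The upper bound comes from a single bending pair close to $\{p,q\}$, supplied by admissibility condition \ref{adm:x_ny_n_pairs}. The lower bound comes from the fact that $d$ is a pseudometric below $\rho$ that already satisfies every bending constraint, so the maximality of $\rho_B$ applies.

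\emph{Lower bound.} By definition, $\rho_B$ is the largest pseudometric on $M$ that is dominated by $\rho$ and satisfies $\rho_B(x_n,y_n)\leq d(x_n,y_n)$ for all $n$. The metric $d$ meets both requirements: $d\leq\rho$ holds by hypothesis, and $d(x_n,y_n)\leq d(x_n,y_n)$ holds trivially. Maximality therefore gives $d\leq\rho_B$ on all of $M$, which is the left inequality in \eqref{eq:isom_better_small_scales}. Note that this argument does not need the explicit formula for $\rho_B$ through chains.

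\emph{Upper bound.} Fix $k$ and $p\neq q$ with $d(p,q)\leq 2^{-k}\diam(M)$. By \ref{adm:x_ny_n_pairs} there is an index $n$ such that, possibly after swapping the roles of $x_n$ and $y_n$,
\begin{equation*}
2\big((\omega\circ d)(p,x_n)+(\omega\circ d)(q,y_n)\big)\leq 2^{-k}\varepsilon\, d(p,q).
\end{equation*}
We bound $\rho_B(p,q)$ by the chain through this pair. The triangle inequality for $\rho_B$, together with $\rho_B\leq\rho\leq\omega\circ d$ and $\rho_B(x_n,y_n)\leq d(x_n,y_n)$, gives
\begin{equation*}
\rho_B(p,q)\leq \rho_B(p,x_n)+\rho_B(x_n,y_n)+\rho_B(y_n,q)\leq (\omega\circ d)(p,x_n)+d(x_n,y_n)+(\omega\circ d)(y_n,q).
\end{equation*}
It remains to compare $d(x_n,y_n)$ with $d(p,q)$. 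By the triangle inequality for $d$ and $d\leq\omega\circ d$ on $M$ (condition \ref{adm:distortion}),
\begin{equation*}
d(x_n,y_n)\leq d(p,q)+(\omega\circ d)(p,x_n)+(\omega\circ d)(q,y_n).
\end{equation*}
Substituting this into the previous bound yields
\begin{equation*}
\rho_B(p,q)\leq d(p,q)+2\big((\omega\circ d)(p,x_n)+(\omega\circ d)(q,y_n)\big)\leq (1+2^{-k}\varepsilon)\,d(p,q).
\end{equation*}

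\emph{Main subtlety.} The factor $2$ built into \ref{adm:x_ny_n_pairs} is exactly what absorbs the error terms: they appear once in the chain and once more when $d(x_n,y_n)$ is compared with $d(p,q)$. Beyond that, the only point needing care is that the chain bound is legitimate for a bending by infinitely many triplets. This holds because $\rho_B$ is itself a pseudometric below $\rho$ with $\rho_B(x_n,y_n)\leq d(x_n,y_n)$, so the triangle inequality applies to it directly and no explicit chain formula is needed. The hypotheses $\rho_B(x_n,y_n)\leq d(x_n,y_n)\leq\rho(x_n,y_n)$ also show the bending is well defined.
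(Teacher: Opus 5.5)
Your proof is correct and follows the paper's argument. The lower bound comes from maximality of $\rho_B$, since $d$ satisfies all the bending constraints. The upper bound comes from the chain $p\to x_n\to y_n\to q$ supplied by \ref{adm:x_ny_n_pairs}, with $d(x_n,y_n)$ compared to $d(p,q)$ via the triangle inequality and $d\le\omega\circ d$; you spell out this comparison step, which the paper uses without stating it.
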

\begin{proof}
    Recall that $\rho_B$ is, by definition, the biggest pseudometric in $M$ such that $\rho_B\leq \rho$ and such that $\rho_B(x_n,y_n)\leq d(x_n,y_n)$ for every $n\in\N$. Therefore, it follows directly that $d\leq \rho_B$. It only remains to show the second inequality of equation \eqref{eq:isom_better_small_scales}.

    Let $k\in\mathbb{N}\cup\{0\}$ and let $p\neq q\in M$ with $d(p,q)\leq 2^{-k}\varepsilon$. By admissibility condition \ref{adm:x_ny_n_pairs}, and changing $p$ and $q$ if necessary, there exists $n\in\N$ such that 
    \begin{equation}
        2\left((\omega\circ d)(p,x_n)+(\omega\circ d)(q,y_n)\right)\leq (2^{-k}\varepsilon )d(p,q).
    \end{equation}
    By the bending definition, $\rho_B(x_n,y_n)\leq d(x_n,y_n)$. Therefore, we have
    \begin{align}
        \rho_B(p,q)&\leq \rho_B(p,x_n)+\rho_B(x_n,y_n)+\rho_B(y_n,q)\\
        &\leq \rho(p,x_n)+d(x_n,y_n)+\rho(y_n,q) \nonumber\\
        &\leq 2\left((\omega\circ d)(p,x_n)+(\omega\circ d)(q,y_n)\right)+d(p,q) \nonumber\\
        &\leq (1+2^{-k}\varepsilon)d(p,q).\nonumber \qedhere
    \end{align}

\end{proof}

Note that, under the assumptions of the previous lemma, the identity map $\iota:(M,\rho_B)\rightarrow (M,d)$ is a $(1+\varepsilon)$-isometry. The Lemma shows in fact a stronger statement, namely, that $\iota$ gets closer and closer to being an isometry at smaller scales. This implies by an elementary argument that it is a \textit{true} isometry for geodesic compact spaces $(M,d)$. We record this fact (in the language of pseudometrics) and its simple proof for later reference.

\begin{lemma}
\label{Lemma:almost_isom_small_scales_geodesic}
    Let $(M,d_M)$ be a length metric space, let $(N,\rho_N)$ be a pseudometric space, and let $(r_n)_n\subset (0,+\infty)$ be a sequence converging to $0$ with $r_1=\diam(M)$. Let $\varphi\colon (M,d_M)\rightarrow (N,\rho_N)$ be a map such that for every $n\in\N$ and for every pair of points $p\neq q\in M$ with $d_M(p,q)\leq r_n$ it holds that 
    \begin{equation}
    \label{eq:isom_small_scales_rn}
        d_M(p,q)\leq \rho_N(\varphi(p),\varphi(q))\leq (1+r_n)d_M(p,q).
    \end{equation}
    Then $d_M(p,q)=\rho_N(\varphi(p),\varphi(q))$ for all $p,q\in M$.
\end{lemma}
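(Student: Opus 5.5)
Both inequalities come from the hypothesis combined with the length structure of $(M,d_M)$. The lower bound $d_M(p,q)\le \rho_N(\varphi(p),\varphi(q))$ holds for every pair, since $r_1=\diam(M)$ lets us apply \eqref{eq:isom_small_scales_rn} with $n=1$ to any $p\neq q$ (and it is trivial when $p=q$). So only the upper bound $\rho_N(\varphi(p),\varphi(q))\le d_M(p,q)$ needs work. The idea is to subdivide an almost-geodesic between $p$ and $q$ into short pieces. On each piece the distortion is at most $1+r_n$, and this factor tends to $1$ as the mesh shrinks.

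Fix $p\neq q$ and $\eta>0$, and write $L=d_M(p,q)$. Since $M$ is a length space, there is a rectifiable curve $\gamma\colon[0,1]\to M$ from $p$ to $q$ of length $\ell(\gamma)\le L+\eta$. Fix $n\in\N$. By continuity of the arc-length, we can choose a partition $0=t_0<t_1<\dots<t_m=1$ such that each subarc $\gamma|_{[t_{i-1},t_i]}$ has length at most $r_n$. Consequently $d_M(\gamma(t_{i-1}),\gamma(t_i))\le r_n$ for every $i$.

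We then apply the triangle inequality for the pseudometric $\rho_N$ together with \eqref{eq:isom_small_scales_rn} on each piece. Pieces whose endpoints coincide contribute $0$; for every other piece, $\rho_N(\varphi(\gamma(t_{i-1})),\varphi(\gamma(t_i)))\le (1+r_n)\,d_M(\gamma(t_{i-1}),\gamma(t_i))$. Summing over $i$ gives
\begin{equation}
\rho_N(\varphi(p),\varphi(q))\le (1+r_n)\sum_{i=1}^m d_M(\gamma(t_{i-1}),\gamma(t_i))\le (1+r_n)\,\ell(\gamma)\le (1+r_n)(L+\eta).
\end{equation}
Letting $n\to\infty$, so that $r_n\to0$, and then $\eta\to0$ yields $\rho_N(\varphi(p),\varphi(q))\le L$. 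Combined with the lower bound, this proves the equality.

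The only step requiring any care is the choice of partition. We need to cut $\gamma$ into finitely many subarcs of length at most $r_n$, and we need the chordal distances to sum to at most $\ell(\gamma)$. Both facts are standard properties of rectifiable curves: the length function $s\mapsto \ell(\gamma|_{[0,s]})$ is continuous and nondecreasing, and length dominates the sum of chord lengths. Nothing else should be an obstacle. Note that compactness of $M$ is not needed, only the length property; the compact geodesic case discussed before the lemma is a special case.
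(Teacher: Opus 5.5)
Your proposal is correct and follows the paper's proof. The paper also gets the lower bound from $r_1=\diam(M)$. For the upper bound it chains $p$ to $q$ through points $a_0,\dots,a_k$ with $d_M(a_i,a_{i+1})\le r_n$ and $\sum_i d_M(a_i,a_{i+1})\le(1+r_n)d_M(p,q)$, which gives $\rho_N(\varphi(p),\varphi(q))\le(1+r_n)^2d_M(p,q)$. You differ only in two harmless ways: you explicitly build the chain by subdividing a near-optimal rectifiable curve (a step the paper takes for granted from the length property), and you keep the slack $\eta$ independent of $n$.
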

\begin{proof}
    Fix $p\neq q\in M$. Since $r_1=\diam(M)$, it follows that $d_M(p,q)\leq \rho_N(\varphi(p),\varphi(q))$. To prove the reverse inequality $\rho_N(\varphi(p),\varphi(q))\leq d_M(p,q)$, it suffices to show that $\rho_N(\varphi(p),\varphi(q))\leq (1+r_n)^2d_M(p,q)$ for every $n\in\N$.  

    Since $M$ is length, given $n\in\N$ there exists a finite sequence $(a_i)_{i=0}^k\subset M$ with $a_0=p$ and $a_k=q$ such that $d_M(a_i,a_{i+1})\leq r_n$ and such that $\sum_{i=0}^{k-1} d_M(a_i,a_{i+1})\leq(1+r_n)d_M(p,q)$. By assumption, we have that 
\begin{align}
    \rho_N(\varphi(p),\varphi(q))&\leq\sum_{i=0}^{k-1}d_N(\varphi(a_i),\varphi(a_{i+1}))\\
    &\leq \sum_{i=0}^{k-1}(1+r_n)d_M(a_i,a_{i+1})=(1+r_n)^2d_M(p,q),\nonumber
\end{align}
as desired.
\end{proof}

\subsubsection{Proof of Main Theorem}
\begin{theorem}
\label{Theorem:main_theorem_2_general}
For every compact metric space $(M,d)$, every  $\varepsilon>0$, and every countable ordinal $\alpha$, there exists a compact metric space $(Y,\rho)$ with $\alpha_{cf}(Y)= \alpha+\alpha_{cf}(M)$ so that the following conditions hold.
    \begin{enumerate}
    \item The diameter of $(Y,\rho)$ is less than $3\diam (M,d)$.
    \item There exists an injective map $\iota\colon M\rightarrow Y$ such that $\iota(M)$ is total in $(Y,\rho_{cf}^\alpha)$ and
    \begin{equation}d(x,y)\leq \rho^\alpha_{cf}(\iota(x),\iota(y))\leq(1+\varepsilon)d(x,y)\end{equation}
    for all $x,y\in M$. Moreover, if $x,y\in M$ satisfy that $d(x,y)=\diam(M,d)$, then $d(x,y)=\rho(\iota(x),\iota(y))$.
    
    \item If $M$ is a geodesic space, $M$ is a gapped segment, or $M$ is purely 1-unrectifiable, then $\iota$ satisfies 
     \begin{equation}d(x,y)= \rho^\alpha_{cf}(\iota(x),\iota(y))\end{equation} for all $x,y\in M$.

    \end{enumerate}
\end{theorem}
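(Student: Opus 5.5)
We argue by transfinite induction on $\alpha$, proving the statement for all compact $(M,d)$ and all $\varepsilon>0$ at once. For $\alpha=0$ we take $Y=M$ and $\iota=\mathrm{id}$. If $M$ is finite we first adjoin finitely many pairs by hand. Following the Remark after Lemma \ref{lemma:exists_admissible}, we glue at every pair of $M$ a thread $S_{x,y}^{\beta}$ given by the inductive hypothesis applied to a gapped segment, and then no condition \ref{adm:d(xnyn)_goes_to_zero} is needed. So assume $M$ is infinite, and fix an admissible tuple $\Gamma=(\omega,(\alpha_n,x_n,y_n)_n)$ from Lemma \ref{lemma:exists_admissible}.

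\textbf{Construction.} The frame is the purely $1$-unrectifiable space $(M,\omega\circ d)$. For each $n$ we apply the inductive hypothesis with ordinal $\alpha_n<\alpha$ to the gapped segment $S_n^0:=S_{\{x_n,y_n\},\omega}$, whose first curve-flat quotient is the two-point space $(\{x_n,y_n\},d)$. This gives a compact space $(S_n,\rho_n)$ whose order-$\alpha_n$ quotient is isometric to $S_n^0$, by item (3) for gapped segments, and whose diameter is less than $3\omega(d(x_n,y_n))$. By the ``moreover'' part of item (2), the endpoints $x_n,y_n$ stay at $\rho_n$-distance $\omega(d(x_n,y_n))=(\omega\circ d)(x_n,y_n)$, so $S_n$ can be attached to the frame with anchor $\{x_n,y_n\}$. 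Let $Y$ be the attachment and $\rho$ its pseudometric. Compactness of $Y$ comes from compactness of $M$ and of each $S_n$, together with $\diam S_n\le 3\omega(d(x_n,y_n))\to0$ by \ref{adm:d(xnyn)_goes_to_zero}: any sequence either accumulates in finitely many threads or approaches $M$. The diameter bound (1) follows because $\omega(\diam M)=\diam M$ and each thread is short. The same computation shows that pairs realizing $\diam(M)$ keep their $\rho$-distance.

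\textbf{Computing $\rho^\beta_{cf}$.} We iterate Lemma \ref{Lemma:Attach_and_CF_commute}, using Lemma \ref{Lemma:Finite_bending_p1u} and Lemma \ref{Lemma:CF_of_pari_bending_formula} to keep track of the bendings. The claim to prove by induction on $\beta\le\alpha$ is that $\rho^\beta_{cf}$ is the attachment of the threads $(S_n,(\rho_n)^\beta_{cf})$, suitably bent, to the frame $M$ with $\omega\circ d$ bent at the pairs $(x_n,y_n,\omega(d(x_n,y_n)))$ for those $n$ with $\alpha_n\ge\beta$, and at $(x_n,y_n,d(x_n,y_n))$ for those with $\alpha_n+1\le\beta$. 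Threads with $\alpha_n<\beta$ collapse, up to total sets, onto their two endpoints at distance $d(x_n,y_n)$. At limit stages we take infima, using that $\alpha_n\to\alpha$ by \ref{adm:ordinal_approach_alpha}; we also need the frame to remain purely $1$-unrectifiable after infinitely many bendings, which can be justified since the bent frame is locally a finite bending. At $\beta=\alpha$ every thread has collapsed, so $M$ is total and $\rho^\alpha_{cf}|_M$ is exactly the full bending $\rho_B$ of Lemma \ref{Lemma:Full_bending_isom_better_small_scales}. That lemma gives item (2), and Lemma \ref{Lemma:almost_isom_small_scales_geodesic} gives the geodesic case of item (3). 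In the purely $1$-unrectifiable and gapped-segment cases we instead choose $\omega$ and the pairs so that bending does not shrink distances, by taking $\omega$ equal to the identity on the relevant scales. The identity $\alpha_{cf}(Y)=\alpha+\alpha_{cf}(M)$ then follows because $Y^\alpha_{cf}\cong M$ up to a $(1+\varepsilon)$-bi-Lipschitz map, while for $\beta<\alpha$ some thread with $\alpha_n\ge\beta$ still has a nondegenerate rectifiable part.

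\textbf{Main obstacle.} The hardest step is extending Lemma \ref{Lemma:Attach_and_CF_commute} to pseudometric frames obtained by infinitely many bendings, and passing through limit ordinals. A direct approach would need the frame to be purely $1$-unrectifiable at every stage. I would establish this by exhibiting enough curve-flat $1$-Lipschitz functions directly: functions of the form $\rho_B(p,\cdot)$, which are Lipschitz for $\omega\circ d$, and which are approximated on small balls by finite bendings, using \ref{adm:d(xnyn)_goes_to_zero} to control the far-away pairs.
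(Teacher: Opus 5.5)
Your construction for non-purely-$1$-unrectifiable $M$ matches the paper's: the frame $(M,\omega\circ d)$, threads obtained from the inductive hypothesis applied to gapped segments, and a stagewise description of $\rho^\beta_{cf}$ via Lemma~\ref{Lemma:Attach_and_CF_commute}. The genuine gap is the purely $1$-unrectifiable case, which includes finite $M$. There the theorem requires both $\alpha_{cf}(Y)=\alpha$ and an exact isometry $Y^\alpha_{cf}\cong M$, and none of your options gives both.

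\begin{itemize}
\item If $\omega$ is the identity, every gapped segment $S_{\{x_n,y_n\},\omega}$ degenerates to the two-point set $\{x_n,y_n\}$, since its arms have length $(\omega(t)-t)/2=0$. Each thread then has index exactly $\alpha_n$ and collapses without ever bending the frame. For successor $\alpha=\gamma+1$ all $\alpha_n\le\gamma$, so $Y^\gamma_{cf}\cong M$ and $\alpha_{cf}(Y)\le\gamma<\alpha$.
\item Taking $\omega$ to be the identity only ``on the relevant scales'' is impossible. A concave $\omega$ with $\omega(0)=0$ that agrees with $t\mapsto t$ on an interval agrees with it on all of $[0,\cdot]$ up to that interval.
\item With a genuine local distortion, $\rho^\alpha_{cf}|_M$ is the full bending of Lemma~\ref{Lemma:Full_bending_isom_better_small_scales}, which in general is only $(1+\varepsilon)$-close to $d$. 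By \ref{adm:d(xnyn)_goes_to_zero} only finitely many pairs live at any fixed scale. In a snowflaked interval no chain of short steps approximates $d$, so $\rho_B(p,q)>d(p,q)$ for suitable $p,q$.
\item Your finite-$M$ shortcut fails for limit $\alpha$. One thread per pair means finitely many threads with ordinals $\beta_{x,y}<\alpha$, hence $\alpha_{cf}(Y)\le\max\beta_{x,y}+1<\alpha$.
\end{itemize}

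The missing idea is the paper's wedge construction. Attach at a single point $p_0\in M$ threads $S_n$ with $\diam S_n\le 2^{-n}\diam M$ whose order-$\alpha_n$ quotient is a real segment; these come from the already established non-purely-$1$-unrectifiable case. Then $M$ is never bent, every thread collapses to $p_0$ by stage $\alpha$, and for each $\beta<\alpha$ a thread with $\alpha_n\ge\beta$ keeps $Y^\beta_{cf}$ non-purely-$1$-unrectifiable.

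The gapped-segment case of item (3) has a similar problem. A gapped segment is not purely $1$-unrectifiable, so $\omega$ must be a local distortion. What you need is not that bending leaves distances unchanged, but that it shrinks $\omega\circ d$ exactly down to $d$. The paper writes $M=A\cup B$ with geodesic arms and applies Lemma~\ref{Lemma:almost_isom_small_scales_geodesic} on $A$ and on $B$ separately. It also inserts the pair $(a,b)$ of gap endpoints as $(x_1,y_1)$ in the admissible tuple, which the Remark after Lemma~\ref{lemma:exists_admissible} allows. This gives $\rho^\alpha_{cf}(a,b)=d(a,b)$, and additivity $d(x,y)=d(x,a)+d(a,b)+d(b,y)$ yields equality across the gap. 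This matters because your induction uses item (3) for gapped segments to get the exact collapse $(\rho_n)^{\alpha_n+1}_{cf}(x_n,y_n)=d(x_n,y_n)$.

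Two smaller points.
\begin{itemize}
\item The ``main obstacle'' you describe does not arise. By \ref{adm:ordinal_approach_alpha}, for each $\beta<\alpha$ only finitely many $n$ satisfy $\alpha_n<\beta$, so the frame at every stage where Lemma~\ref{Lemma:Attach_and_CF_commute} is applied is a finite bending. Lemma~\ref{Lemma:Finite_bending_p1u} then suffices; the infinite bending appears only at the final stage $\alpha$, where the lemma is no longer needed.
\item For item (1), the first threads produced by the inductive hypothesis can have diameter up to $3\omega(d(x_n,y_n))$, which may exceed $\diam M$. This is why the paper uses plain gapped segments for $n<n_0$.
\end{itemize}
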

Let us first provide a sketch of the proof.  Let $(M,d_M)$ be a compact metric space, let $\alpha$ be a countable ordinal, and let $\varepsilon>0$. Let $\Gamma=(\omega,(\alpha_n,x_n,y_n)_n)$ be  $(M,\alpha,\varepsilon)$-admissible. We construct a compact metric space $(Y,\rho)$ in the following way. 

For every $n\in \N$, we define a metric space  $(S_n,\rho_n)$ such that 
\begin{enumerate}[label=(b.\arabic*)]
\item\label{sketch:isometric} $(S_n,(\rho_n)_{cf}^{\alpha_n})$ is isometric to the gapped segment 
    \begin{equation}(S_{\{x_n,y_n\},\omega},d_{\{x_n,y_n\},\omega});\end{equation}
    \item $(\rho_n)_{cf}^{\beta}(x_n,y_n)=(\omega\circ d)(x_n,y_n)$ for all $\beta<\alpha$.\label{sketch:big}
\end{enumerate}

Next, we define $(Y,\rho)$ as the attachment of the threads $(S_n,\rho_n)_n$ into a frame $(M,\omega\circ d)$ anchoring at pairs of points $(x_n,y_n)$. The spaces $(S_n,\rho_n)_n$ are taken to be small enough to keep the resulting space $(Y,\rho)$ compact.
We prove that if we apply the curve-flat quotient operation to $(Y,\rho)$ of any order $\beta$ strictly less than $\alpha$, then the frame $(M,d)$ stays purely 1-unrectifiable thanks to \ref{sketch:big}, bent only at finitely many pairs of points where the threads $(S_n,(\rho_n)_{cf}^{\beta})$ have collapsed into pairs. Only when we apply the curve-flat quotient operation of order exactly $\alpha$,  all threads $(S_n,(\rho_n)_{cf}^{\alpha_n})_n$ collapse into the pairs $(\{x_n,y_n\},d_M)$ by \ref{sketch:isometric}. After the collapse of all threads, we recover the original metric space $(M,d_M)$ by Lemma \ref{Lemma:Full_bending_isom_better_small_scales}, with small distortion.

We also point out that the statement of the main theorem is more technical than Theorem \ref{Main_Theorem_4_compacthighordercf}. The additional conditions we include are useful when applying the inductive hypothesis, and Theorem \ref{Main_Theorem_4_compacthighordercf} clearly follows as a corollary.

\begin{proof}[Proof of Theorem \ref{Theorem:main_theorem_2_general}]

We first assume that $M$ is not purely 1-unrectifiable and thus infinite. We prove this result by induction on $\alpha$. It is trivially true for $\alpha=0$ with $(Y,\rho)=(M,d)$. Let $\alpha>0$ and assume the result holds for all $\beta<\alpha$.

By Lemma \ref{lemma:exists_admissible}, there exists a $(M,\alpha,\varepsilon)$-admissible tuple $\Gamma=(\omega, (\alpha_n,x_n,y_n)_n)$.
Note that by admissibility condition \ref{adm:distortion}, it holds that $(M,\omega\circ d)$ is compact and its diameter is equal to the diameter of $M$.

The metric space $(Y,\rho)$ is built by attachment, and its frame is $(M,\omega\circ d)$. In order to define its threads we will use the following claim.

\begin{claim}

\label{Claim1}
    There exist a sequence of compact metric spaces $\{(S_n,\rho_n)\}_n$ and $n_0\in \N$ with the following properties:
    \begin{enumerate}[label=(S.\arabic*)]
    \item For all $n\in \N$,
    \begin{equation}S_n\cap M=\{x_n,y_n\};\end{equation} 
   and the pseudometric $\rho_n$ coincides with $\omega\circ d$ in $\{x_n,y_n\}$.
    \item For all $n_1,n_2\in \N$  with $n_1\neq n_2$, we have \begin{equation}S_{n_1}\cap S_{n_2}\subset M;\end{equation}
    \item The sequence $(\diam(S_n,\rho_n))_n$ is bounded by $\diam(M,d)$ and converges to $0$.

     \item For  $n\in\N$ and $\alpha_n+1\leq\beta\leq \alpha$, the pair $\{x_n,y_n\}$ is total in $(S_n,(\rho_n)_{cf}^\beta)$ and
        \begin{equation}
        \label{eq:S_k_collapse_by_ord_alpha}
            d(x_n,y_n)= (\rho_n)^{\beta}_{cf}(x_n,y_n);
        \end{equation}
    
    \item If $n\geq n_0$, then the space $(S_n,(\rho_n)_{cf}^{\alpha_n})$ is isometric to the gapped segment 
    \begin{equation}(S_{\{x_n,y_n\},\omega},d_{\{x_n,y_n\},\omega});\end{equation}

        \item  If $n\geq n_0$, then
        \begin{equation}
        \label{eq:S_k_dont_collapse_b4_ord_alpha}
            (\rho_n)^{\beta}_{cf}(x_n,y_n)=(\omega\circ d)(x_n,y_n)\qquad\text{for all $\beta\leq \alpha_n$. }
        \end{equation}

    \end{enumerate}
\end{claim}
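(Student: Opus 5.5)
We build each thread by applying the inductive hypothesis (Theorem \ref{Theorem:main_theorem_2_general} for the ordinal $\alpha_n<\alpha$) to the gapped segment $(S_{\{x_n,y_n\},\omega},d_{\{x_n,y_n\},\omega})$. Since this gapped segment is a compact metric space and part (3) of the theorem covers gapped segments, for each $n$ we obtain a compact $(Y_n,\sigma_n)$ and an injective $\iota_n$ such that $\iota_n(S_{\{x_n,y_n\},\omega})$ is total in $(Y_n,(\sigma_n)^{\alpha_n}_{cf})$ and $\iota_n$ is an isometry onto $(Y_n/(\sigma_n)_{cf}^{\alpha_n},(\sigma_n)^{\alpha_n}_{cf})$. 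We then rename points so that $\iota_n(0)=x_n$ and $\iota_n(\omega(d(x_n,y_n)))=y_n$, and make the remaining points of different $Y_n$ disjoint from each other and from $M$. This gives (S.1) and (S.2).

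For (S.1) we also need $\sigma_n(x_n,y_n)=(\omega\circ d)(x_n,y_n)$. The endpoints $0,\omega(d(x_n,y_n))$ realize the diameter of the gapped segment, so the ``moreover'' clause of part (2) gives $\sigma_n(\iota_n(0),\iota_n(\omega d))=\omega(d(x_n,y_n))$. For (S.3), part (1) gives $\diam(Y_n)<3\,\omega(d(x_n,y_n))$. This tends to $0$ by \ref{adm:d(xnyn)_goes_to_zero} and continuity of $\omega$ at $0$. For the bound by $\diam(M,d)$, we choose $n_0$ with $3\omega(d(x_n,y_n))\le \diam(M)$ for $n\ge n_0$. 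For the finitely many $n<n_0$ we instead take $S_n$ to be a gapped segment at order $0$; alternatively we use the preceding Remark to prepend harmless entries and replace these by simple threads. In that case $S_n$ is just the gapped segment $S_{\{x_n,y_n\},\omega}$, whose diameter is $\omega(d(x_n,y_n))\le\diam M$.

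We get (S.5) and (S.6) for $n\ge n_0$ as follows. (S.5) is exactly the isometry at order $\alpha_n$. For (S.6), when $\beta\le\alpha_n$ we have $(\sigma_n)^\beta_{cf}\ge(\sigma_n)^{\alpha_n}_{cf}$, so $(\sigma_n)^\beta_{cf}(x_n,y_n)\ge |0-\omega d|=\omega(d(x_n,y_n))$, and it is at most $\sigma_n(x_n,y_n)=\omega(d(x_n,y_n))$. For (S.4) and $\beta\ge\alpha_n+1$, we use that curve-flat quotients of order $\alpha_n+1$ are the first-order quotient of the order-$\alpha_n$ quotient, which is the gapped segment. Its curve-flat quotient is the pair $\{x_n,y_n\}$ with distance $d(x_n,y_n)$, as observed after the definition of gapped segments. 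A two-point space is purely $1$-unrectifiable, so the sequence stabilizes and the identity persists for all larger $\beta\le\alpha$. For $n<n_0$, with order-$0$ threads, the same argument applies directly with $\alpha_n$ replaced by $0$.

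The main obstacle is the bookkeeping at small $n$. Specifically, (S.4) is required for all $n$ with $\beta\ge\alpha_n+1$, while for $n<n_0$ we may have used order $0$ rather than $\alpha_n$. This is consistent, since an order-$0$ thread collapses already at order $1\le\alpha_n+1$ and then stays collapsed. We must also make sure that the pseudometric transfer through quotients (totality and identification of the order-$\alpha_n+1$ quotient with the gapped segment's first quotient) is stated carefully.
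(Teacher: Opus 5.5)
Your proposal is correct and follows the paper's proof essentially step by step: plain gapped segments for the finitely many $n<n_0$; for $n\ge n_0$, the inductive hypothesis at order $\alpha_n$, using part (3) for gapped segments and the diametral ``moreover'' clause of part (2); stabilization after the two-point quotient for (S.4); and the same sandwich inequality for (S.6). Your threshold $3\omega(d(x_n,y_n))\le\diam(M,d)$ for $n\ge n_0$ is the condition actually needed (the condition on $n_0$ as written in the paper appears garbled), and your aside about prepending entries via the Remark is unnecessary.
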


\begin{proof}[Proof of Claim \ref{Claim1}]

Find $n_0\in \N$ so that $d(x_n,y_n)\leq 3\diam(M,\omega\circ d)$ for all $n\geq n_0$. For $n<n_0$, we define $(S_n,\rho_n)$ as the gapped segment $(S_{\{x_n,y_n\},\omega},d_{\{x_n,y_n\},\omega})$ such that $S_n\cap M=\{x_n,y_n\}$ and $S_{n_1}\cap S_{n_2}\subset M$. The second part of condition (S.1) is satisfied, and $\diam(S_n,\rho_N)\leq \diam(M,d)$ since $\omega(\diam(M,d))=\diam(M,d)$. Additionally, since the first order curve-flat quotient of $(S_n,\rho_n)$ is isometric to $(\{x_n,y_n\},d)$, condition (S.4) also holds. 

For $n\geq n_0$, we use the inductive hypothesis for the gapped segment $(S_{\{x_n,y_n\},\omega},d_{\{x_n,y_n\},\omega})$, the ordinal $\alpha_n$ and any $\varepsilon$, in order to find a metric space $(S_n,\rho_n)$ with $\diam(S_n,\rho_n)\leq 3(\omega(d(x_n,y_n))$ and an injective map $\iota_n\colon S_{\{x_n,y_n\},\omega}\rightarrow S_n$ such that $\iota_n(S_{\{x_n,y_n\},\omega})$ is total in $(S_n,(\rho_n)^{\alpha_n}_{cf})$ and 
\begin{equation}
\label{eq:Sn_is_high_order_gapped_segment}
    d_{\{x_n,y_n\},\omega}(x,y)= (\rho_n)_{cf}^{\alpha_n}(\iota_n(x),\iota_n(y))
\end{equation}
for all $x,y\in S_{\{x_n,y_n\},\omega}$. Moreover, it is satisfied that $d_{\{x_n,y_n\},\omega}(x,y)=\rho_n(\iota_n(x),\iota_n(y))$ for $x,y\in S_{\{x_n,y_n\},\omega}$ such that $d_{\{x_n,y_n\},\omega}(x,y)=\diam(S_{\{x_n,y_n\},\omega},d_{\{x_n,y_n\},\omega})=\omega(d(x_n,y_n))$. In particular, it holds that 
\begin{equation}
\label{eq:Sn_is_attachable}
    (\omega\circ d)(x_n,y_n)=\rho_n(\iota_n(x_n),\iota_n(y_n)).
\end{equation}

Let us identify $S_{\{x_n,y_n\},\omega}$ with its image through $\iota_n$. Then $\{x_n,y_n\}\subset S_n$ and conditions (S.1) and (S.2) are satisfied (we use equation \eqref{eq:Sn_is_attachable} for the second part of condition (S.1)). Condition (S.3) also holds, since $\diam(S_n,\rho_n)\leq 3(\omega(d(x_n,y_n))\leq \diam(M,d)$, and by admissibility condition \ref{adm:d(xnyn)_goes_to_zero} the sequence $(3(\omega(d(x_n,y_n))_n$ goes to $0$.

The condition (S.5) holds by the induction hypothesis. We thus have that $(S_n,(\rho_n)_{cf}^{\alpha_n+1})$ is isometric to the pair $(\{x_n,y_n\},d)$. Equivalently, the pair $\{x_n,y_n\}$ is total in $(S_n,(\rho_n)_{cf}^{\alpha_n+1})$ and 
\begin{equation}
    \label{eq:Sn_cf_alphan+1_is_pair_xnyn}
    d(x_n,y_n)= (\rho_n)_{cf}^{\alpha_n+1}(x_n,y_n).
\end{equation}
Since purely $1$-unrectifiable spaces are isometric to their curve-flat quotients, the previous statement holds for all ordinals $\beta$ satisfying $\alpha_n+1\leq \beta\leq\alpha$. This proves (S.4).

Finally, to prove (S.6), fix $\beta\leq \alpha_n$ and observe that
\begin{align}
    (\rho_n)_{cf}^\beta(x_n,y_n)&\geq (\rho_n)_{cf}^{\alpha_{n}}(x_n,y_n)\geq (\omega\circ d)(x_n,y_n)\\
    &=\rho_n(x_n,y_n)\geq (\rho_n)_{cf}^\beta(x_n,y_n),\nonumber
\end{align}
where we used equations \eqref{eq:Sn_is_high_order_gapped_segment} and \eqref{eq:Sn_is_attachable}.

\end{proof}

\textit{Proof of Theorem \ref{Theorem:main_theorem_2_general} continues.}

Define $Y=M\cup\bigcup_{n\in\N}S_n$, and the metric $\rho$ is defined by attachment of the frame $(M,\omega\circ d)$ with the frames $(S_n,\rho_n)$. Conditions (S.1) and (S.2) show that the attachment is well defined. 

To show that $(Y,\rho)$ is compact, we prove it contains an $r$-dense compact subset for every $r>0$. Given $r>0$, by condition (S.3) there exists $n_r\in\N$ such that $\diam(S_n,\rho_n)<r$ for all $n\geq n_r$. This implies that the compact metric space $M\cup\bigcup_{n\leq n_r}S_n$ is $r$-dense in $(Y,\rho)$.

Since the diameter of $(M,w\circ d)$ and $(S_n,\rho_n)$ is less than $\diam (M,d)$ for all $n\in\N$, an easy computation shows that $\diam(Y,\rho)\leq 3\diam(M,d)$, so (1) is proven.

For (2), the injective map $\iota\colon M\rightarrow Y$ we consider is simply the inclusion map. We need to prove that $M$ is total in $(Y,\rho^\alpha_{cf})$ and that 
\begin{equation}
    d(x,y)\leq \rho_{cf}^\alpha(x,y)\leq (1+\varepsilon)d(x,y)
\end{equation}
for all $x,y\in M$.

To do so, we will prove the following statement.

\begin{claim}
\label{Claim2}
For every $0\leq \beta\leq \alpha$, the pseudometric $\rho_{cf}^\beta$ is the pseudometric in $Y$ obtained by attachment of the threads $(S_n,\eta_n^\beta)_n$ into the frame $(M,d_\beta)$, where
\begin{itemize}
    \item $(M,d_\beta)$ is the bending of $(M,\omega\circ d)$ by the set $\{(x_n,y_n,(\rho_n)_{cf}^\beta(x_n,y_n))\}_n$
    \item $(S_n,\eta_n^\beta)$ is the bending of $(S_n,(\rho_n)_{cf}^\beta)$ by $\{(x_n,y_n,d_\beta(x_n,y_n))\}$
\end{itemize}
\end{claim}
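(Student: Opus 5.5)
We argue by transfinite induction on $\beta\in[0,\alpha]$. For $\beta=0$ there is nothing to prove: $(\rho_n)^0_{cf}(x_n,y_n)=\rho_n(x_n,y_n)=(\omega\circ d)(x_n,y_n)$ by (S.1), so both bendings are trivial ($d_0=\omega\circ d$, $\eta_n^0=\rho_n$) and the attachment is exactly $\rho$.

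\emph{Successor step.} Assume the description holds at $\beta<\alpha$. We apply Lemma \ref{Lemma:Attach_and_CF_commute} to the frame $(M,d_\beta)$ with threads $(S_n,\eta_n^\beta)$. First we check that the frame is a complete purely $1$-unrectifiable metric space. By (S.6), for $n\geq n_0$ with $\alpha_n\geq\beta$ we have $(\rho_n)^\beta_{cf}(x_n,y_n)=(\omega\circ d)(x_n,y_n)$, so those triplets do not bend anything. By \ref{adm:ordinal_approach_alpha} and monotonicity of $(\alpha_n)$, either all but finitely many $n$ satisfy $\alpha_n\geq\beta$, or (if $\alpha=\beta+1$ is a successor and $\alpha_n=\beta$ eventually) still $\alpha_n\geq \beta$ eventually. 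Hence $d_\beta$ is a finite bending of $(M,\omega\circ d)$. That space is purely $1$-unrectifiable since $\omega$ is local, so Lemma \ref{Lemma:Finite_bending_p1u} applies. It remains to see that $d_\beta$ is a metric: finite bending by positive amounts ($d(x_n,y_n)>0$ bounds these from below by (S.4)/(S.6)) does not identify points.

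The lemma then gives $(\rho^\beta_{cf})_{cf}=\rho^{\beta+1}_{cf}$ as the attachment of $(S_n,\widetilde\eta_n)$ into $(M,\widetilde d)$. Here $\widetilde d$ is the bending of $d_\beta$ by $(x_n,y_n,(\eta^\beta_n)_{cf}(x_n,y_n))$, and $\widetilde\eta_n$ is the bending of $(\eta^\beta_n)_{cf}$ by $(x_n,y_n,\widetilde d(x_n,y_n))$. Next, by Lemma \ref{Lemma:CF_of_pari_bending_formula}, $(\eta_n^\beta)_{cf}$ is the bending of $(\rho_n)^{\beta+1}_{cf}$ by $(x_n,y_n,\min\{(\rho_n)^{\beta+1}_{cf}(x_n,y_n),d_\beta(x_n,y_n)\})$. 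Composing bendings, as in Remark \ref{Remark:Finite_bending}, together with the fact that $d_\beta$ is already bent at $(x_n,y_n)$ by $(\rho_n)^\beta_{cf}(x_n,y_n)\geq(\rho_n)^{\beta+1}_{cf}(x_n,y_n)$, identifies $\widetilde d$ with $d_{\beta+1}$. The key point is that the bending of $\omega\circ d$ at level $\beta$ followed by the level-$(\beta+1)$ amounts equals the single bending by the smaller level-$(\beta+1)$ amounts, because bending is "largest pseudometric below constraints" and constraints only tighten. The same computation then identifies $\widetilde\eta_n$ with $\eta_n^{\beta+1}$.

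\emph{Limit step.} Let $\beta$ be a limit ordinal, and write $\tau_\beta$ for the attachment pseudometric described in the claim. Since $\rho^\beta_{cf}=\inf_{\gamma<\beta}\rho^\gamma_{cf}$, it suffices to show that the described pseudometrics $\tau_\gamma$ decrease to $\tau_\beta$. Monotonicity is clear. For each fixed pair of points, the quantities $(\rho_n)^\gamma_{cf}(x_n,y_n)$ are eventually constant in $\gamma<\beta$ for every $n$ outside a finite set, by (S.6). For the finitely many remaining indices they converge by the definition of limit curve-flat pseudometrics. The attachment formula involves minima over finite anchors and bendings with countably many constraints, and for each pair only finitely many constraints are relevant up to $\epsilon$ thanks to (S.3) and \ref{adm:d(xnyn)_goes_to_zero}. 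So the infimum over $\gamma$ commutes with these operations, giving $\inf_\gamma\tau_\gamma=\tau_\beta$.

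The main obstacle is this limit step, namely the interchange of $\inf_{\gamma<\beta}$ with the (countable) bending and the attachment. The inequality $\inf\geq$ is the delicate one. I would handle it via chain representations of bending pseudometrics, as infima over finite chains alternating $\omega\circ d$-jumps and bent pairs, and then use compactness and the eventual stabilization from (S.6) to pass to the limit chain by chain.
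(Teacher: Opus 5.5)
Your proposal is correct and follows essentially the paper's proof. It proceeds by induction on $\beta$. The successor step applies Lemma \ref{Lemma:Attach_and_CF_commute} to the finite bending $(M,d_\beta)$, uses Lemma \ref{Lemma:CF_of_pari_bending_formula} to compute $(\eta_n^\beta)_{cf}$, and then uses the maximality property of bendings to identify the result with $d_{\beta+1}$ and $\eta_n^{\beta+1}$. The limit step passes $\inf_{\gamma<\beta}$ through the bending and attachment formulas.

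That limit step is easier than you suggest. The inequality $\inf_\gamma\tau_\gamma\geq\tau_\beta$ is just the monotonicity you already noted. For the reverse inequality, fix one finite near-optimal chain for $\tau_\beta$ and let $\gamma\to\beta$ in its finitely many bending amounts; these converge by the definition of $(\rho_n)^\beta_{cf}$. So compactness, (S.3), \ref{adm:d(xnyn)_goes_to_zero} and stabilization via (S.6) are not needed.
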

\begin{proof}[Proof of Claim \ref{Claim2}]
    We prove this claim by induction on $\beta$. 

    First we deal with the successor ordinal case. Assume we have shown the claim for an ordinal $\beta<\alpha$. We will prove that $\rho_{cf}^{\beta+1}$ is the pseudometric in $Y$ obtained by attachment of the threads $(S_n,\eta_n^{\beta+1})_n$ into the frame $(M,d_{\beta+1})$, where
    \begin{itemize}
    \item $(M,d_{\beta+1})$ is the bending of $(M,\omega\circ d)$ by the set $\{(x_n,y_n,(\rho_n)_{cf}^{\beta+1}(x_n,y_n))\}_n$;
    \item $(S_n,\eta_n^{\beta+1})$ is the bending of $(S_n,(\rho_n)_{cf}^{\beta+1})$ by $\{(x_n,y_n,d_{\beta+1}(x_n,y_n))\}$.
\end{itemize}

    We know that $\rho_{cf}^{\beta+1}$ is the first order curve-flat pseudometric associated to $\rho_{cf}^\beta$, which, by inductive hypothesis, is the attachment of the threads $(S_n,\eta_n^\beta)_n$ into the frame $(M,d_\beta)$, as defined in the statement of the claim. We start by verifying that the hypothesis of Lemma \ref{Lemma:Attach_and_CF_commute} are satisfied for the attachment pseudometric $\rho_{cf}^\beta$. First, note that, by (S.5) in Claim \ref{Claim1}, $(\rho_n)_{cf}^\beta(x_n,y_n)=(\omega\circ d)(x_n,y_n)$ for all $n\geq n_0$ such that $\alpha_n\geq \beta$. Therefore, using admissibility condition \ref{adm:ordinal_approach_alpha}, we get that the metric $d_\beta$ on $(M,\omega\circ d)$ is in fact a bending metric defined by a finite set of triplets. Hence, Lemma \ref{Lemma:Finite_bending_p1u} implies that $(M,d_\beta)$ is purely $1$-unrectifiable. Secondly, we also need to verify that $d_\beta$ and $\eta_n^\beta$ coincide in the set $S_n\cap M$. However, this is immediate by the definition of $\eta_n^\beta$.

    We can use now Lemma \ref{Lemma:Attach_and_CF_commute} to get that $\rho_{cf}^{\beta+1}$ is the attachment pseudometric on $Y$ obtained by attachment of the threads $(S_n,\rho_{B_n})_n$ into the frame $(M,\rho_B)$ where

     \begin{itemize}
        \item $(M,\rho_{B})$ is the bending of $(M,d_\beta)$ by $\{(x_n,y_n,(\eta_n^\beta)_{cf}(x_n,y_n))\}_n$;
    \item $(S_n,\rho_{B_n})$ is the bending of $(S_n,(\eta_n^\beta)_{cf})$ by $\{(x_n,y_n,\rho_B(x_n,y_n))\}$.
    \end{itemize}
    We must show that $\rho_B=d_{\beta+1}$ and $\rho_{B_n}=\eta_n^{\beta+1}$ for all $n\in\N$. 
    
    We start with $\rho_B=d_{\beta+1}$, first showing that $\rho_B\leq d_{\beta+1}$. By definition of $d_{\beta+1}$ as a bending, it is enough to show that $\rho_B\leq \omega\circ d$ and $\rho_B(x_n,y_n)\leq (\rho_n)_{cf}^{\beta+1}(x_n,y_n)$ for all $n\in\N$. The first inequality is direct, since $\rho_B\leq d_\beta\leq \omega\circ d$. For the second, fix $n\in \N$. By definition $\rho_B(x_n,y_n)\leq (\eta_n^\beta)_{cf}(x_n,y_n)$. Moreover, since $\eta_n^\beta$ is a bending of $(\rho_n)_{cf}^\beta$, we get that $\eta_n^\beta\leq (\rho_n)_{cf}^\beta$. Taking the curve-flat quotient of both pseudometrics we obtain $(\eta_n^\beta)_{cf}\leq (\rho_n)_{cf}^{\beta+1}$ and the desired inequality follows.

    Next we show that $d_{\beta+1}\leq \rho_B$. In this case we must prove that $d_{\beta+1}\leq d_\beta$ and $d_{\beta+1}(x_n,y_n)\leq (\eta_n^\beta)_{cf}(x_n,y_n)$ for every $n\in\N$. The inequality $d_{\beta+1}\leq d_\beta$ holds since both are bending pseudometrics of $\omega\circ d$, but the bending triplets of $d_{\beta+1}$ are smaller than those of $d_\beta$. Fix $n\in \N$. Since $\eta_n^\beta$ is the bending of $(\rho_n)_{cf}^\beta$ by a single pair $(x_n,y_n,d_\beta(x_n,y_n))$, it follows by Lemma \ref{Lemma:CF_of_pari_bending_formula} that 
    \begin{equation}(\eta_n^\beta)_{cf}(x_n,y_n)=\min\{d_\beta(x_n,y_n),(\rho_n)_{cf}^{\beta+1}(x_n,y_n)\}.\end{equation}
    As $d_{\beta+1}(x_n,y_n)\leq (\rho_n)_{cf}^{\beta+1}(x_n,y_n)$ and $d_{\beta+1}\leq d_\beta$, it follows that $d_{\beta+1}(x_n,y_n)\leq (\eta_n^\beta)_{cf}(x_n,y_n)$, as sought. 

    Now we fix $n\in\N$ and show that $\rho_{B_n}=\eta_n^{\beta+1}$. By Lemma \ref{Lemma:CF_of_pari_bending_formula}, the pseudometric $(\eta_n^\beta)_{cf}$ is equal to the bending of $(S_n,(\rho_n)_{cf}^{\beta+1})$ by the triplet 
    \begin{equation}(x_n,y_n,\min\{(\rho_n)_{cf}^{\beta+1}(x_n,y_n),d_\beta(x_n,y_n)\}),\end{equation}
    and $\rho_{B_n}$ is a further bending of this pseudometric by $\rho_B(x_n,y_n)$. Since we have already shown that $\rho_B=d_{\beta+1}$ and $d_{\beta+1}\leq d_\beta$, we obtain that $\rho_{B_n}$ is the bending of $(S_n,(\rho_n)_{cf}^{\beta+1})$ by the triplet
    \begin{equation}(x_n,y_n,\min\{(\rho_n)_{cf}^{\beta+1}(x_n,y_n),d_{\beta+1}(x_n,y_n)\}).\end{equation}
    Finally, observe that $d_{\beta+1}(x_n,y_n)\leq (\rho_n)_{cf}^{\beta+1}(x_n,y_n)$ by definition of $d_{\beta+1}$. In conclusion, we get that $\rho_{B_n}$ is the bending of $(S_n,(\rho_n)_{cf}^{\beta+1})$ by the triplet
     \begin{equation}(x_n,y_n,d_{\beta+1}(x_n,y_n)),\end{equation}
     and it it thus equal to $\eta_n^{\beta+1}$. This finishes the successor ordinal case.

    Let $\beta$ be a limit ordinal, and assume the claim holds for all $\gamma<\beta$. By definition, $\rho_{cf}^\beta =\inf_{\gamma<\beta} \rho_{cf}^\gamma$. For every $\gamma<\beta$, we know that $\rho_{cf}^\gamma$ is the attachment pseudometric in $Y$ defined by attaching the threads $(S_n,\eta_n^\gamma)$ to the frame $(M,d_\gamma)$, where 
    \begin{itemize}
        \item $(M,d_\gamma)$ is the bending of $(M,\omega\circ d)$ by the set $\{(x_n,y_n,(\rho_n)_{cf}^\gamma(x_n,y_n)\}_n$;
        \item $(S_n,\eta_n^\gamma)$ is the bending of $(S_n,(\rho_n)_{cf}^\gamma)$ by $\{(x_n,y_n,d_\gamma(x_n,y_n)\}$.
    \end{itemize}
    Denote by $d_\beta$ the pseudometric $\rho_{cf}^\beta$ restricted to $M$, which coincides with $\inf_{\gamma<\beta} d_\gamma$. Since $\inf_{\gamma<\beta} (\rho_n)_{cf}^\gamma=(\rho_n)_{cf}^\beta$ for every $n\in\N$, we obtain by the bending definition that $d_\beta$ equals the bending pseudometric obtained by bending $(M,\omega\circ d)$ by $\{x_n,y_n,(\rho_n)_{cf}^\beta(x_n,y_n)\}_n$. 

    Finally, fix $n\in\N$, and note that, denoting by $\eta_n^\beta$ the pseudometric $\rho_{cf}^\beta$ restricted to $S_n$, we have that $\eta_n^\beta=\inf_{\gamma<\beta} \eta_n^\gamma$. We apply again that $(\rho_n)_{cf}^\beta=\inf_{\gamma<\beta} (\rho_n)_{cf}^\gamma$, and that $d_\beta=\inf_{\gamma<\beta}d_\gamma$ to conclude that $\eta_n^\beta$ is the bending of $(S_n,(\rho_n)_{cf}^\beta)$ by the triplet $\{x_n,y_n,d_\beta(x_n,y_n)\}$. This proves the limit ordinal ordinal case.

\end{proof}

\textit{Proof of Theorem \ref{Theorem:main_theorem_2_general} continues.}

We apply the previous claim to $\beta=\alpha$ to prove (2) and (3). First we show that $M$ is total in $(Y,\rho_{cf}^\alpha)$. For all $z\in Y\setminus M$, there exists $n\in \N$ such that $z\in S_n$. Claim \ref{Claim2} implies that $\rho_{cf}^\alpha$ restricted to $S_n$ coincides with the bending $(\rho_n)_{cf}^\alpha$ by $\{(x_n,y_n,d_\alpha(x_n,y_n))\}$. In particular, $\rho_{cf}^\alpha$ in $S_n$ is smaller than $(\rho_n)_{cf}^\alpha$. Condition (S.4) of Claim \ref{Claim1} says that $\{x_n,y_n\}$ is total in $(S_n,(\rho_n)_{cf}^\alpha)$, and therefore $\{x_n,y_n\}$ is also total in $(S_n,\rho_{cf}^\alpha)$. As a consequence, $\rho_{cf}^\alpha(\{x_n,y_n\},z)=0$, as desired.

On the other hand, the pseudometric $\rho_{cf}^\alpha$ restricted to $M$ coincides with the bending pseudometric obtained bending $(M,\omega\circ d)$ by $\{(x_n,y_n,(\rho_n)_{cf}^\alpha(x_n,y_n)\}_n$. By equation \eqref{eq:S_k_collapse_by_ord_alpha}, we can apply Lemma \ref{Lemma:Full_bending_isom_better_small_scales} and it follows that for every $x\neq y\in M$ with $d(x,y)\leq 2^{-k}\diam(M,d)$ it holds that
\begin{equation}d(x,y)\leq \rho_{cf}^\alpha(x,y)\leq (1+2^{-k}\varepsilon)d(x,y).\end{equation}
In particular, this implies that
\begin{equation}d(x,y)\leq \rho_{cf}^\alpha(x,y)\leq (1+\varepsilon)d(x,y)\end{equation}
for all $x,y\in M$. Moreover, if $x,y\in M$ satisfy that $d(x,y)=\diam(M,d)$, then $(\omega\circ d)(x,y)=d(x,y)$ which finishes the proof of (2). 

Finally, we can directly apply Lemma \ref{Lemma:almost_isom_small_scales_geodesic} to prove that (3) holds for geodesic $M$. If, on the other hand, $M$ is a gapped segment, then $M$ can be written as $M=A\cup B$, where both $A$ and $B$ are geodesic and there exist two points $a\in A$ and $b\in B$ such that $d(x,y)=d(x,a)+d(a,b)+d(b,y)$ for all $x\in A$ and $y\in B$. Therefore, by Lemma \ref{Lemma:almost_isom_small_scales_geodesic} we have that $d=\rho^\alpha_{cf}$ when restricted to $A$ or $B$. Moreover, if we add the pair $(a,b)$ as $(x_1,y_1)$ in the admissible set $\Gamma$ for $(M,\alpha,\varepsilon)$, Claim \ref{Claim2} shows that $(x_1,y_1,(\rho_1)_{cf}^\alpha)$ is one of the bending triplets that defines $\rho_{cf}^\alpha$, which combined with (S.4) of Claim \ref{Claim1} proves that $d(a,b)=\rho_{cf}^\alpha(a,b)$. Combining both facts we obtain that $d(x,y)=\rho_{cf}^\alpha(x,y)$ for all $x,y\in M$, as desired. 

It only remains to show that $\alpha_{cf}(Y)=\alpha+\alpha_{cf}(M)$. However, this follows directly since $(M,d)$ is not purely $1$-unrectifiable and $Y/d_{cf}^\alpha$ is bi-Lipschitz equivalent to $(M,d)$. This finishes the proof for non purely $1$-unrectifiable metric spaces.

Let us assume that $M$ is purely 1-unrectifiable. Note that this also covers the case when $M$ is finite. Let $(\alpha_n)_n$ be an increasing sequence of ordinals less than $\alpha$ such that for every $\beta<\alpha$ there exists $N\in\N$ so that $\alpha_n\geq \beta$ for all $n>N$. Choose any point $p_0\in M$. For every $n\in\N$, we use the non purely $1$-unrectifiable case of the theorem to find a space $(S_n,\rho_n)$ such that $\diam(S_n,\rho_n)\leq 2^{-n}\diam(M)$ and $(S_n,(\rho_n)_{cf}^{\alpha_n})$ is isometric to the real line segment $[0,\frac{2^{-n}}{3}\diam(M,d)]$. Choosing any point in $S_n$, we may assume that $S_n\cap M=\{p_0\}$ for every $n\in\N$ and $S_n\cap S_m=\{p_0\}$ for every $n\neq m\in \N$. Now, consider the metric space $(Y,\rho)$ obtained by attaching $(S_n,\rho_n)$ to the frame $(M,d)$ (this attachment at a single point is sometimes called the $\ell_1$-sum of metric spaces). It is direct to prove that $(Y,\rho)$ is compact and that $\diam(Y,\rho)\leq 2\diam(M,d)$, so (1) holds. To prove that $(Y,\rho_{cf}^\alpha)$ is isometric to $(M,d)$, it suffices to note that for every $\beta\leq\alpha$, the space $(Y,\rho_{cf}^\beta)$ is the space obtained by attaching $(S_n,(\rho_n)_{cf}^\beta)$ to the frame $(M,d)$, and thus (2) and (3) follow, since $(S_n,(\rho_n)_{cf}^\alpha)$ is isometric to a single point metric space for every $n\in \N$. 

Finally, if $\beta<\alpha$, there exists $n\in\N$ such that $\beta\leq  \alpha_n$. Since $(Y,\rho_{cf}^{\alpha_n})$ contains the space $(S_n,(\rho_n)_{cf}^{\alpha_n})$, which is isometric to a non-trivial real line interval and not purely $1$-unrectifiable, we get that $\alpha_{cf}(Y)\geq\alpha_n\geq\beta$. We conclude that $\alpha_{cf}(Y)=\alpha=\alpha+\alpha_{cf}(M)$.

\end{proof}

\section{Open problems}
We propose some natural open questions. On one hand, it is natural to study if stronger non-universality results (in the sense of Theorem \ref{Main_Theorem_2_nocompactSUT}) can be obtained using the curve-flat index. This would be the case if, for instance, compact spaces enjoying a given bi-Lipschitz invariant property and with arbitrarily high countable curve-flat index were to be found. 

\begin{question}
    Let $P$ be any of the following bi-Lipschitz invariants:
    \begin{itemize}
        \item Finite Nagata dimension;
        \item Doubling Property (equivalently, finite Assouad dimension);
        \item Bi-Lipschitz embeddability into a finite dimensional euclidean space.
    \end{itemize}
    Does there exist a compact metric space universal for Lipschitz quotients onto compact spaces with property $P$? 
    Do there exist compact metric spaces of arbitrarily high countable curve-flat index satisfying property $P$?
\end{question}

We refer to \cite[Section 10.13]{Hei01} for the doubling property and the Assouad dimension, and to \cite{LanSch05} for the Nagata dimension.

We propose these three properties as they are natural and well studied bi-Lipschitz invariants that do not obviously prevent the curve-flat index to be higher than a given countable ordinal; and for which, to the best of our knowledge, it is unknown whether they admit a universal compact space for Lipschitz quotients. We believe that our construction in Theorem \ref{Theorem:main_theorem_2_general} yields spaces with finite Nagata dimension for \emph{finite} ordinals, provided we start with $M$ of finite Nagata dimension.  

On the other hand, some bi-Lipschitz invariants do force the curve-flat index to be lower than a given countable ordinal, but we still do not know whether they admit compact spaces universal for Lipschitz quotients. 

\begin{question}
    Let $P$ be any of the following bi-Lipschitz invariants:
    \begin{itemize}
        \item Pure $1$-unrectifiability;
        \item Curve-flat index smaller than $\alpha$, for some $\alpha<\omega_1$;
        \item Bi-Lipschitz embeddability into the real line.
    \end{itemize}
    
    Does there exist a compact metric space universal for Lipschitz quotients onto compact spaces with property $P$? 
\end{question}

Note that bi-Lipschitz embeddability into the real line implies that the curve-flat index is at most $1$. In fact, it follows from Proposition 6.6 and Theorem 6.5 in \cite{Flo+24} that this holds for all countably $1$-rectifiable spaces. This, in turn, raises the following question:

\begin{question}
    Given $n\in\N$, does there exist an ordinal $\alpha(n)<\omega_1$ such that $\alpha_{cf}(M)\leq \alpha(n)$ for all $M\subset \R^n$?
\end{question}

\section*{Acknowledgements} The authors are grateful to Yoël Perreau and Nikita Leo for helpful discussions on the topic. 

This work was supported by the Estonian Research Council grant (PRG1901). The second named author was also supported by Grant PID2021-122126NB-C33 funded by MICIU/AEI/10.13039/501100011033 and by ERDF/EU.
\printbibliography

\end{document}